\newcommand{\email}[1]{\href{mailto:#1}{#1}}
\numberwithin{equation}{section}
\definecolor{violet}{rgb}{0.580,0.,0.827}
\newtheorem{theorem}{Theorem}
\newtheorem{lemma}[theorem]{Lemma}
\theoremstyle{remark}
\newtheorem{remark}[theorem]{Remark}
\theoremstyle{definition}
\DeclareRobustCommand{\bvec}[1]{\boldsymbol{#1}}
  \renewcommand{\bvec}[1]{#1}%
\newcommand{\uvec}[1]{\underline{\bvec{#1}}}
\newcommand{\cvec}[1]{\bvec{\mathcal{#1}}}
\DeclareRobustCommand{\btens}[1]{\boldsymbol{#1}}
  \renewcommand{\btens}[1]{#1}%
\newcommand{\utens}[1]{\underline{\bvec{#1}}}
\newcommand{\ctens}[1]{\bvec{\mathcal{#1}}}
\newcommand{\Real}{\mathbb{R}}
\newcommand{\Symm}{\mathbb{S}}
\newcommand{\Matr}{\Real^{3\times 3}}
\newcommand{\Tless}{\mathbb{T}}
\newcommand{\ID}[1]{\boldsymbol{I}_{#1}}
\newcommand{\st}{\,:\,}
\DeclareMathOperator{\tdot}{\bf :}
\DeclareMathOperator{\tr}{tr}
\DeclareMathOperator{\Tr}{tr}
\DeclareMathOperator{\DEV}{\bf dev}
\DeclareMathOperator{\SYM}{\bf sym}
\DeclareMathOperator{\DIM}{dim}
\DeclareMathOperator{\Ker}{Ker}
\DeclareMathOperator{\Image}{Im}
\newcommand{\term}{\mathfrak{T}}
\newcommand{\Ctensor}{\mathbb{C}}
\newcommand{\dotp}[1]{\partial_{\tangent_E}#1}
\newcommand{\dotpp}[1]{\partial_{\tangent_E}^2#1}
\DeclareMathOperator{\DIV}{div}
\DeclareMathOperator{\VDIV}{\bf div}
\DeclareMathOperator{\CURL}{\bf curl}
\DeclareMathOperator{\GRAD}{\bf grad}
\DeclareMathOperator{\HESS}{\bf hess}
\newcommand{\sym}{{\rm sym}} % 'Symmetric' in sub- and superscripts
\newcommand{\compl}{{\rm c}}
\newcommand{\trimmed}{{-}}
\newcommand{\jump}[2]{\llbracket#2\rrbracket_{#1}}
\newcommand{\Poly}[2][]{\mathcal{P}_{#1}^{#2}}
\newcommand{\vPoly}[2][]{\cvec{P}_{#1}^{#2}}
\newcommand{\tPoly}[2][]{\ctens{P}_{#1}^{#2}}
\newcommand{\Roly}[1]{\boldsymbol{\mathcal{R}}^{#1}}
\newcommand{\cRoly}[1]{\boldsymbol{\mathcal{R}}^{\compl,#1}}
\newcommand{\cGoly}[1]{\boldsymbol{\mathcal{G}}^{\compl,#1}}
\newcommand{\Holy}[1]{\cvec{H}^{#1}}
\newcommand{\cHoly}[1]{\cvec{H}^{\compl,#1}}
\newcommand{\CGoly}[1]{\cvec{CG}^{#1}} 
\newcommand{\cCGoly}[1]{\cvec{CG}^{\compl,#1}}
\newcommand{\SRoly}[1]{\cvec{SR}^{#1}}
\newcommand{\cSRoly}[1]{\cvec{SR}^{\compl,#1}}
\newcommand{\RT}[1]{\boldsymbol{\mathcal{R}}^{\trimmed,#1}}
\newcommand{\HtrimPoly}[1]{\cvec{H}^{\trimmed,#1}}
\newcommand{\SRtrimPoly}[1]{\cvec{SR}^{\trimmed,#1}}
\newcommand{\CGtrimPoly}[1]{\cvec{CG}^{\trimmed,#1}}
\newcommand{\Xtrim}[1]{\cvec{X}^{\trimmed,#1}}
\newcommand{\lproj}[2]{\pi_{\mathcal{P},#2}^{#1}}
\newcommand{\vlproj}[2]{\bvec{\pi}_{\cvec{P},#2}^{#1}}
\newcommand{\tlproj}[2]{\btens{\pi}_{\ctens{P},#2}^{#1}}
\newcommand{\Rproj}[2][T]{\bvec{\pi}_{\cvec{R},#1}^{#2}}
\newcommand{\cRproj}[2][T]{\bvec{\pi}_{\cvec{R},#1}^{\compl,#2}}
\newcommand{\CGproj}[2][F]{\btens{\pi}_{\cvec{CG},#1}^{#2}}
\newcommand{\cCGproj}[2][F]{\btens{\pi}_{\cvec{CG},#1}^{\compl,#2}}
\newcommand{\SRproj}[2][T]{\btens{\pi}_{\cvec{SR},#1}^{#2}}
\newcommand{\cSRproj}[2][T]{\btens{\pi}_{\cvec{SR},#1}^{\compl,#2}}
\newcommand{\RTproj}[2][T]{\btens{\pi}_{\boldsymbol{\mathcal{R}},#1}^{\trimmed,#2}}
\newcommand{\Htrimproj}[2][T]{\btens{\pi}_{\cvec{H},#1}^{\trimmed,#2}}
\newcommand{\SRtrimproj}[2][T]{\btens{\pi}_{\cvec{SR},#1}^{\trimmed,#2}}
\newcommand{\CGtrimproj}[2][F]{\btens{\pi}_{\cvec{CG},#1}^{\trimmed,#2}}
\newcommand{\Xtrimproj}[1]{\btens{\pi}_{\cvec{X},Y}^{\trimmed,#1}}
\newcommand{\faces}[1]{\mathcal{F}_{#1}}
\newcommand{\edges}[1]{\mathcal{E}_{#1}}
\newcommand{\vertices}[1]{\mathcal{V}_{#1}}
\newcommand{\FT}{\faces{T}}
\newcommand{\ET}{\edges{T}}
\newcommand{\EF}{\edges{F}}
\newcommand{\VE}{\vertices{E}}
\newcommand{\VF}{\vertices{F}}
\newcommand{\VT}{\vertices{T}}
\newcommand{\normal}{\bvec{n}}
\newcommand{\tangent}{\bvec{t}}
\newcommand{\tE}{\tangent_E}
\newcommand{\tFo}{\tangent_{F,1}}
\newcommand{\tFd}{\tangent_{F,2}}
\newcommand{\nEo}{\normal_{E,1}}
\newcommand{\nEd}{\normal_{E,2}}
\newcommand{\nFE}{\normal_{FE}}
\newcommand{\nF}{\normal_F}
\newcommand{\wFE}{\omega_{FE}}
\newcommand{\wTF}{\omega_{TF}}
\newcommand{\Mh}{\mathcal{M}_h}
\newcommand{\Th}{\mathcal{T}_h}
\newcommand{\Fh}{\mathcal{F}_h}
\newcommand{\Eh}{\mathcal{E}_h}
\newcommand{\Vh}{\mathcal{V}_h}
\newcommand{\trtF}[1]{{#1}_{\tangent,F}}
\newcommand{\trnnF}[1]{{#1}_{\normal\normal,F}}
\newcommand{\trntF}[1]{{#1}_{\normal\tangent,F}}
\newcommand{\trttF}[1]{{#1}_{\tangent\tangent,F}}
\newcommand{\trnE}[1]{{#1}_{\normal,E}}
\newcommand{\trnnE}[1]{{#1}_{\normal\normal,E}}
\newcommand{\trntE}[1]{{#1}_{\normal\tangent,E}}
\newcommand{\trttE}[1]{{#1}_{\tangent\tangent,E}}
\newcommand{\trItF}[1]{{\widehat{#1}}_{\tangent,F}}
\newcommand{\trInF}[1]{{\widehat{#1}}_{\normal,F}}
\newcommand{\sumFT}{\sum_{F\in\FT}\wTF\int_F}
\newcommand{\sumEF}{\sum_{E\in\EF}\wFE\int_E}
\newcommand{\sumEFT}{\sum_{F\in\FT}\wTF\sum_{E\in\EF}\wFE\int_E}
\newcommand{\DIVF}{\DIV_F}
\newcommand{\VDIVF}{\VDIV_F}
\newcommand{\CURLF}{\CURL_F}
\DeclareMathOperator{\ROTF}{rot_{F}}
\DeclareMathOperator{\VROT}{\mathbf{rot}}
\DeclareMathOperator{\VROTF}{\VROT_F}
\newcommand{\GRADF}{\GRAD_F}
\newcommand{\nSPoly}[2]{\tPoly{#1}(#2;\SYM\Real^{2\times 2})}
\newcommand{\nPoly}[2]{\tPoly{#1}(#2;\Real^{2\times 2})}
\newcommand{\Hdevgrad}[2]{\bvec{H}^1(#1;#2)}
\newcommand{\Hsymcurl}[2]{\bvec{H}(\SYM\CURL,#1;#2)}
\newcommand{\Hdivdiv}[2]{\bvec{H}(\DIV\VDIV,#1;#2)}
\newcommand{\norm}[2]{\|#2\|_{#1}}
\newcommand{\seminorm}[2]{|#2|_{#1}}
\newcommand{\vvvert}{\vert\kern-0.25ex\vert\kern-0.25ex\vert}
\newcommand{\tnorm}[2]{\vvvert #2\vvvert_{#1}}
\newcommand{\Eprod}{\mathcal{E}_{{\rm prod},h}}
\newcommand{\Edivdiv}{\widehat{\mathcal{E}}_{\DIV\VDIV,h}}
\newcommand{\Err}{\mathcal{E}_h}
\newcommand{\uHdevgrad}[1]{\uvec{X}_{\DEV\GRAD,#1}^k}
\newcommand{\uHsymcurl}[1]{\uvec{X}_{\SYM\CURL,#1}^k}
\newcommand{\uHdivdiv}[1]{\uvec{X}_{\DIV\VDIV,#1}^k}
\newcommand{\IDGrad}[1]{\uvec{I}_{\DEV\GRAD,#1}^k}
\newcommand{\ISCurl}[1]{\utens{I}_{\SYM\CURL,#1}^k}
\newcommand{\IDivDiv}[1]{\utens{I}_{\DIV\VDIV,#1}^k}
\newcommand{\DG}[1]{\utens{DG}_{#1}^k}
\newcommand{\DGT}{\btens{DG}_{T}^k}
\newcommand{\DGntF}{\bvec{DG}_{\normal\tangent,F}^k}
\newcommand{\DGttF}{\btens{DG}_{\tangent\tangent,F}^k}
\newcommand{\SC}[1]{\utens{SC}_{#1}^k}
\newcommand{\SCT}{\btens{SC}_{T}^k}
\newcommand{\SCE}{\btens{SC}_{E}^{k+1}}
\newcommand{\SCnnF}{{SC}_{\normal\normal,F}^{k+1}}
\newcommand{\SCddofF}{{SC}_{D,F}^{k+1}}
\newcommand{\DD}[1]{DD_{#1}^k}
\newcommand{\gammaF}{\gamma_{\normal\normal,F}^{k}}
\newcommand{\TP}[1][T]{\btens{P}_{#1}^k}
\newcommand{\DerE}[1]{\mathfrak{G}_E^{#1}}
\newcommand{\htvec}[2][T]{\bvec{#2}_{\cvec{H},#1}}
\newcommand{\srtvec}[2][T]{\bvec{#2}_{\cvec{SR},#1}}
\newcommand{\cgtvec}[2][F]{\bvec{#2}_{\cvec{CG},#1}}
\newcommand{\rttvec}[2][T]{\bvec{#2}_{\boldsymbol{\mathcal{R}},#1}}
\newcommand{\srvec}[2][T]{\bvec{#2}_{\cvec{SR},#1}}
\newcommand{\cgvec}[2][F]{\bvec{#2}_{\cvec{CG},#1}}
\newcommand{\gdofV}[2][V]{\btens{G}_{\bvec{#2},#1}}
\newcommand{\gdofE}[1]{\btens{G}_{\bvec{#1},E}}
\newcommand{\gdofF}[1]{G_{\bvec{#1},F}}
\newcommand{\cdofE}[1]{\btens{C}_{\btens{#1},E}}
\newcommand{\doftE}[1]{{#1}_{\tangent,E}}
\newcommand{\hdoftE}[1]{\widehat{#1}_{\tangent,E}}
\newcommand{\dofnE}[1]{\bvec{#1}_{\normal,E}}
\newcommand{\hdofnE}[1]{\widehat{\bvec{#1}}_{\normal,E}}
\newcommand{\dofnF}[1]{{#1}_{\normal,F}}
\newcommand{\hdoftF}[1]{\widehat{\bvec{#1}}_{\tangent,F}}
\newcommand{\doftF}[1]{\bvec{#1}_{\tangent,F}}
\title{A discrete three-dimensional divdiv complex on polyhedral meshes with application to a mixed formulation of the biharmonic problem}
\author{Daniele A. Di Pietro}
\author{Marien-Lorenzo Hanot}
\affil{%
  IMAG, Univ Montpellier, CNRS, Montpellier, France \\
  \email{daniele.di-pietro@umontpellier.fr}, \email{marien-lorenzo.hanot@umontpellier.fr}%
}
\begin{document}

\maketitle

\begin{abstract}
  In this work, following the Discrete de Rham (DDR) paradigm, we develop an arbitrary-order discrete divdiv complex on general polyhedral meshes.
  The construction rests 1) on discrete spaces that are spanned by vectors of polynomials whose components are attached to mesh entities and 2) on discrete operators obtained mimicking integration by parts formulas.
  We provide an in-depth study of the algebraic properties of the local complex, showing that it is exact on mesh elements with trivial topology.
  The new DDR complex is used to design a numerical scheme for the approximation of biharmonic problems, for which we provide detailed stability and convergence analyses.
  Numerical experiments complete the theoretical results.
  \medskip\\
  \textbf{Key words.} divdiv complex, discrete de Rham method, polyhedral meshes, biharmonic problems, mixed formulations
  \medskip\\
  \textbf{MSC2020.} 65N30, 65N99, 31B30
\end{abstract}

%% \tableofcontents

%------------------------------------------------------------------------------%

\section{Introduction}

Let $\Omega\subset\Real^3$ be a polyhedral domain with boundary $\partial\Omega$.
Denote by $\SYM$ and $\DEV$ the symmetrisation and deviator operators such that, for any matrix $\btens{M}\in\Real^{d\times d}$, $\SYM\btens{M}\coloneq\frac12\left(\btens{M} + \btens{M}^\top\right)$ and $\DEV\btens{M}\coloneq\btens{M} - \frac1d(\tr\btens{M})\ID{d}$.
We construct a discrete counterpart of the three-dimensional divdiv complex
\begin{small}
  \begin{equation} \label{eq:complex}
    \begin{tikzcd}
      \RT{1}(\Omega) \arrow[r,"i_\Omega"]
      & \Hdevgrad{\Omega}{\Real^3} \arrow[r,"\DEV\GRAD"]
      & \Hsymcurl{\Omega}{\Tless} \arrow[r,"\SYM\CURL"]
      &\Hdivdiv{\Omega}{\Symm} \arrow[r,"\DIV\VDIV"]
      & L^2(\Omega) \arrow[r,"0"]
      & \lbrace 0 \rbrace,
    \end{tikzcd}
  \end{equation}%
\end{small}%
where $\RT{1}\coloneq\vPoly{0}(\Omega) + \bvec{x}\Poly{0}(\Omega)$ is the lowest-order Raviart--Thomas space,
$\Hdevgrad{\Omega}{\Real^3}$ is spanned by vector-valued functions that are square-integrable over $\Omega$ along with their gradient,
$\Hsymcurl{\Omega}{\Tless}$ by functions taking values in $\Tless \coloneq \DEV \Real^{3\times 3}$ that are square-integrable over $\Omega$ along with the symmetric part of their curl,
and $\Hdivdiv{\Omega}{\Symm}$ by functions taking values in $\Symm \coloneq \SYM \Real^{3\times 3}$ that are square-integrable together with the divergence of their (row-wise) divergence.
The divdiv complex can be derived from the de Rham complex through the BGG construction \cite{Arnold.Hu:21}, which offers a powerful framework to study its theoretical properties, but still lacks a generic blueprint for the construction of discrete complexes.

The main difficulty in the numerical approximation of the complex \eqref{eq:complex} is related to the algebraic constraints that appear in both the spaces and the operators.
Finite element approximations of the spaces appearing in the complex have been developed in \cite{Chen.Huang:22*1,Adams.Cockburn:05,Arnold.Awanou.ea:08,Arnold.Winther:02}.
The discretization of the full complex is, on the other hand, much more recent \cite{Hu.Liang.ea:22}.

The above references are concerned with spaces built on standard (matching simplicial) meshes.
In this work, following the Discrete de Rham (DDR) paradigm of \cite{Di-Pietro.Droniou.ea:20,Di-Pietro.Droniou:23} (see also \cite{Bonaldi.Di-Pietro.ea:23} for a very recent generalization using differential forms), we address the discretization of the divdiv complex \eqref{eq:complex} on more general meshes made of polyhedral elements and possibly featuring non-matching interfaces.
The support of such meshes provides great flexibility in the approximation of the domain and enables an efficient use of computational resources through non-conforming local mesh refinement and agglomeration \cite{Bassi.Botti.ea:12,Antonietti.Giani.ea:13,Antonietti.Cangiani.ea:16}.
  Polytopal methods additionally benefit from a higher-level point of view, which enables unknowns-reduction strategies such as serendipity \cite{Beirao-da-Veiga.Brezzi.ea:18,Beirao-da-Veiga.Mascotto:23,Chen.Sukumar:23}; see also \cite{Di-Pietro.Droniou:23*2} for a general framework in the context of discrete complexes.
The key idea of DDR methods consists in replacing both the spaces and the operators in the complex with discrete counterparts.
Discrete spaces are spanned by vectors of polynomials with components attached to the mesh entities, while discrete operators are obtained mimicking integration by parts formula.
Applying this paradigm to the discretization of the divdiv complex involves a number of subtleties, from the decomposition of traces of tensor-valued fields to the identification of the appropriate integration by parts formulas.
We provide a complete study of the algebraic properties of the local complex showing how the design of the spaces and operators fits to ensure exactness on mesh elements with trivial topology.
Local exactness is one of the key ingredients to prove algebraic properties of the global complex following, e.g., the paradigm of \cite{Di-Pietro.Droniou.ea:23}.

The DDR divdiv complex is then used as a starting point to design a numerical scheme for the following fourth-order problem in mixed formulation:
Given $f:\Omega\to\Real$, find $\btens{\sigma}\in\Hdivdiv{\Omega}{\Symm}$ and $u\in L^2(\Omega)$ such that
\begin{equation}\label{eq:weak}
  \begin{alignedat}{4}
    \int_\Omega\btens{\sigma}:\btens{\tau}
    + \int_\Omega\DIV\VDIV\btens{\tau}~u &= 0 &\qquad& \forall\btens{\tau}\in\Hdivdiv{\Omega}{\Symm},
    \\
    -\int_\Omega\DIV\VDIV\btens{\sigma}~v &= \int_\Omega fv
    &\qquad&
    \forall v\in L^2(\Omega).
  \end{alignedat}
\end{equation}
Previous results in the (significantly easier) two dimensional case include the design of a DDR complex along with its application to Kirchhoff--Love plates \cite{Di-Pietro.Droniou:23*1} and its serendipity variant \cite{Botti.Di-Pietro.ea:23}.
Based on the properties of the new three-dimensional divdiv complex, we prove stability of the DDR scheme for problem \eqref{eq:weak}, along with its convergence in $h^{k+1}$, with $h$ denoting the meshsize and $k$ the polynomial degree of the complex.

The rest of the paper is organized as follows.
In Section \ref{sec:setting} we establish the setting, including the relevant integration by parts formulas and trimmed polynomial spaces.
The discrete divdiv complex along with its algebraic properties make the object of Section \ref{sec:discrete.complex}.
Section \ref{sec:biharmonic} contains the DDR scheme for problem \eqref{eq:weak} as well as its stability and convergence analyses.
Sections \ref{sec:complex.property} and \ref{sec:exactness} contain the most technical proofs of algebraic properties of the DDR complex.
Finally, results on local polynomial space of general scope are presented in Appendix \ref{sec:results.polynomial.spaces}.

%------------------------------------------------------------------------------%

\section{Setting}\label{sec:setting}

\subsection{Mesh}

For any (measurable) set $Y\subset\Real^3$, we denote by $h_Y$ its diameter.
We consider meshes $\Mh\coloneq\Th\cup\Fh\cup\Eh\cup\Vh$ of $\Omega$, where:
$\Th$ is a finite collection of open disjoint polyhedral elements such that $\overline{\Omega} = \bigcup_{T\in\Th}\overline{T}$ and $h=\max_{T\in\Th}h_T>0$;
$\Fh$ is a finite collection of open planar faces;
$\Eh$ is the set collecting the open edges of the faces;
$\Vh$ is the set collecting the edge endpoints.
It is assumed, in what follows, that $(\Th,\Fh)$ matches the conditions in \cite[Definition 1.4]{Di-Pietro.Droniou:20}, so that the faces form a partition of the mesh skeleton $\bigcup_{T\in\Th}\partial T$.

Given a mesh edge $E\in\Eh$, we denote by $V_1(E)$ and $V_2(E)$ the vertices in $\Vh$ corresponding to its endpoints and ordered so that $\tangent_E = h_E^{-1}(\bvec{x}_{V_2(E)} - \bvec{x}_{V_1(E)})$.
For the sake of conciseness, whenever no ambiguity can arise, we avoid specifying the edge and simply write $V_1$ and $V_2$.
For any face $F\in\Fh$, we fix a unit normal vector $\normal_F$ and, for any edge $E\in\EF$,we denote by $\normal_{FE}$ the vector normal to $E$ in the plane containing $F$ and oriented such that $(\tangent_E,\normal_{FE},\normal_F)$ forms a right-handed system of coordinates.
Depending on the context, the vectors $\tangent_E$ and $\normal_{FE}$ may be regarded as embedded in the plane containing $F$ or in the three-dimensional space.

The set collecting the mesh faces that lie on the boundary of a mesh element $T\in\Th$ is denoted by $\FT$.
For any $Y\in\Th\cup\Fh$, we denote by $\edges{Y}$ the set of edges of $Y$.
Similarly, for all $Y\in\Th\cup\Fh\cup\Eh$, $\vertices{Y}$ denotes the set of vertices of $Y$.

For each mesh element or face $Y\in\Th\cup\Fh$, we fix a point $\bvec{x}_Y\in Y$ such that there exists a ball centered in $\bvec{x}_Y$ contained in $Y$ and of diameter comparable to $h_Y$ uniformly in $h$ (when $\Mh$ belongs to a regular mesh sequence in the sense of \cite[Definition 1.9]{Di-Pietro.Droniou:20}).

Throughout the paper, $a\lesssim b$ stands for $a\le Cb$ with $C$ depending only on $\Omega$, the mesh regularity parameter and, when polynomial functions are involved, the corresponding polynomial degree.

\subsection{Local and broken polynomial spaces}

For given integers $n\ge 0$ and $\ell\ge 0$, $\mathbb{P}_n^\ell$ denotes the space of $n$-variate polynomials of total degree $\le\ell$, with the convention that
$\mathbb{P}_0^\ell \coloneq \Real$ for any $\ell$ and that $\mathbb{P}_n^{-1} \coloneq \{ 0 \}$ for any $n$.
Given $Y\in\Th\cup\Fh\cup\Eh$, we denote by $\Poly{\ell}(Y)$ the space spanned by the restriction to $Y$ of the functions in $\mathbb{P}_3^\ell$ and by $\lproj{\ell}{Y}$ the corresponding $L^2$-orthogonal projector.
  When $Y$ is a mesh edge $E\in\Eh$ or face $F\in\Fh$, whenever needed we will identify $\Poly{\ell}(E)$ and $\Poly{\ell}(F)$ with the spaces of one- and two-variate polynomials on $E$ and $F$, respectively.
Spaces of vector- or matrix-valued functions on $Y$ that have polynomial components of total degree $\le\ell$ are denoted in boldface and the codomain is specified.
At the global level, we define the broken polynomial space
\begin{equation}\label{eq:broken.Pl}
  \Poly{\ell}(\Th)\coloneq\left\{v_h\in L^2(\Omega)\st \text{$(v_h)_{|T}\in\Poly{\ell}(T)$ for all $T\in\Th$}\right\}.
\end{equation}

\subsection{Direct decompositions of local polynomial spaces}

For any mesh face $F\in\Fh$ and any integer $\ell\ge 0$, the following direct decomposition of vector-valued polynomial functions holds (cf. \cite{Arnold:18}):
\[
\begin{gathered}
  \vPoly{\ell}(F;\Real^2) = \Roly{\ell}(F) \oplus \cRoly{\ell}(F)
  \\
  \text{
    with
    $\Roly{\ell}(F)\coloneq\CURLF\Poly{\ell+1}(F)$ and 
    $\cRoly{\ell}(F)\coloneq (\bvec{x} - \bvec{x}_F)\Poly{\ell-1}(F)$.
  }
\end{gathered}
\]
The following lemma contains a new direct decomposition 
  that will be needed to design the discrete counterpart of $\Hsymcurl{\Omega}$.
\begin{lemma}[Direct decomposition of matrix-valued polynomial fields on faces] \label{lemma:Poly=CGoly+cCGoly}
  For all $F\in\Fh$ and all $\ell\ge 0$, the following direct decomposition holds:
  \begin{equation} \label{eq:Poly=CGoly+cCGoly}
    \begin{gathered}
      \tPoly{\ell}(F;\Real^{2\times 2}) = \CGoly{\ell}(F) \oplus \cCGoly{\ell}(F),
      \\
      \text{%
        with
        $\CGoly{\ell}(F) \coloneq \CURLF \vPoly{\ell+1}(F;\Real^2)$ and
        $\cCGoly{\ell}(F) \coloneq (\mathrm{Id} - \mathrm{adj})\left[\vPoly{\ell-1}(F;\Real^2)\otimes(\bvec{x}-\bvec{x}_F)^\intercal\right]$,
      }
    \end{gathered}
  \end{equation}
  where $\mathrm{adj}$ is the adjugate operator acting on $2\times 2$ matrices.
\end{lemma}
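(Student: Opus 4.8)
The plan is to prove the decomposition by a dimension count combined with a verification that the intersection $\CGoly{\ell}(F) \cap \cCGoly{\ell}(F)$ is trivial; since these are two subspaces of the finite-dimensional space $\tPoly{\ell}(F;\Real^{2\times 2})$, it suffices to check that their dimensions add up to $\dim\tPoly{\ell}(F;\Real^{2\times 2}) = 4\binom{\ell+2}{2}$ and that they meet only at $0$. First I would compute $\dim\CGoly{\ell}(F)$: the map $\CURLF$ applied row-wise (or column-wise, depending on convention) to $\vPoly{\ell+1}(F;\Real^2)$ has kernel exactly the constants in each component that are annihilated by $\CURLF$, i.e. the kernel is $\vPoly{0}(F;\Real^2)$ tensored appropriately — more precisely $\CURLF$ on scalar $\Poly{\ell+1}(F)$ has kernel $\Poly{0}(F)$, so on the vector-valued space the kernel has dimension $2$, giving $\dim\CGoly{\ell}(F) = 2\binom{\ell+3}{2} - 2$. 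For $\cCGoly{\ell}(F)$ I would argue that the map $\bvec{q}\mapsto (\mathrm{Id}-\mathrm{adj})\big[\bvec{q}\otimes(\bvec{x}-\bvec{x}_F)^\intercal\big]$ is injective on $\vPoly{\ell-1}(F;\Real^2)$: the rank-one-style tensor $\bvec{q}\otimes(\bvec{x}-\bvec{x}_F)^\intercal$ vanishes only if $\bvec{q}=0$ (since $\bvec{x}-\bvec{x}_F$ is a nonzero vector field), and the linear operator $\mathrm{Id}-\mathrm{adj}$ on $2\times2$ matrices has a kernel consisting of matrices fixed by $\mathrm{adj}$, which one checks to be exactly the trace-free symmetric matrices — so I need that $\bvec{q}\otimes(\bvec{x}-\bvec{x}_F)^\intercal$ is never a nonzero trace-free symmetric matrix-valued field, which follows because such a rank-one field cannot be symmetric unless $\bvec{q}$ is parallel to $\bvec{x}-\bvec{x}_F$, and in that case it is not trace-free. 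Hence $\dim\cCGoly{\ell}(F) = 2\binom{\ell+1}{2}$, and one verifies $2\binom{\ell+3}{2} - 2 + 2\binom{\ell+1}{2} = 4\binom{\ell+2}{2}$ using Pascal's rule.

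Next I would establish that the sum is direct by showing $\CGoly{\ell}(F) \cap \cCGoly{\ell}(F) = \{0\}$. The natural pairing to exploit is the action of $\DIVF$ (row-wise divergence) together with a trace-type functional, mimicking the scalar case where $\CURLF\Poly{\ell+1}(F)$ and $(\bvec{x}-\bvec{x}_F)\Poly{\ell-1}(F)$ are separated by $\DIVF$. Concretely, elements of $\CGoly{\ell}(F)$ are row-wise curls, hence row-wise divergence-free; so it suffices to show that the only element of $\cCGoly{\ell}(F)$ that is row-wise divergence-free is $0$. Computing $\DIVF\big[(\mathrm{Id}-\mathrm{adj})(\bvec{q}\otimes(\bvec{x}-\bvec{x}_F)^\intercal)\big]$ explicitly: $\DIVF[\bvec{q}\otimes(\bvec{x}-\bvec{x}_F)^\intercal] = (\DIVF(\bvec{x}-\bvec{x}_F))\bvec{q} + (\nabla_F\bvec{q})(\bvec{x}-\bvec{x}_F) = 2\bvec{q} + (\nabla_F\bvec{q})(\bvec{x}-\bvec{x}_F)$ (an Euler-type identity), while the contribution of the $\mathrm{adj}$ part produces a $\ROTF$-type term; setting the total to zero and using the Euler relation that $(\nabla_F\bvec{q})(\bvec{x}-\bvec{x}_F)$ acts as the degree operator on homogeneous components forces $\bvec{q}=0$ degree by degree. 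This is essentially the $2\times2$ analogue of the Arnold–Falk–Winther-style decomposition computations.

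The step I expect to be the main obstacle is pinning down the precise behaviour of the operator $\mathrm{Id}-\mathrm{adj}$ on $2\times2$ matrices and its interaction with the divergence: on $\Real^{2\times2}$ one has $\mathrm{adj}\begin{psmallmatrix}a&b\\c&d\end{psmallmatrix} = \begin{psmallmatrix}d&-b\\-c&a\end{psmallmatrix}$, so $\mathrm{adj}$ is an involution with $+1$-eigenspace the trace-free symmetric matrices (dimension $2$... wait, dimension of trace-free symmetric $2\times2$ is $2$) and $-1$-eigenspace the span of the identity together with the antisymmetric matrices; consequently $\mathrm{Id}-\mathrm{adj}$ has a two-dimensional kernel and a two-dimensional image, which is why $\cCGoly{\ell}(F)$ "loses" exactly the right amount of dimension relative to the full tensor of $\bvec{q}\otimes(\bvec{x}-\bvec{x}_F)^\intercal$ fields. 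I would also need the identity $\mathrm{adj}(\bvec{M})^\intercal = \bvec{M} - (\tr\bvec{M})\ID{2}$ type relation, or equivalently $\mathrm{adj}(\bvec{M}) = (\tr\bvec{M})\ID{2} - \bvec{M}^\intercal$... no — for $2\times2$, $\mathrm{adj}(\bvec{M}) = (\tr\bvec{M})\ID{2} - \bvec{M}$. Getting the correct form of this relation and feeding it through the row-wise $\DIVF$ and $\ROTF$ operators is the delicate bookkeeping; once that is in place, the dimension count and the triviality of the intersection close the argument, and directness of the decomposition follows.
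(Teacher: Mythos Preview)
Your overall strategy---dimension count plus trivial intersection via the row-wise divergence---is exactly what the paper does, and your dimension computations for $\CGoly{\ell}(F)$ and $\cCGoly{\ell}(F)$ match the paper's. However, there are two concrete problems in your execution.

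First, your identification of the kernel of $\mathrm{Id}-\mathrm{adj}$ is wrong. Using the very identity you eventually land on, $\mathrm{adj}(\btens{M}) = (\tr\btens{M})\ID{2} - \btens{M}$, one sees that $\mathrm{adj}(\btens{M}) = \btens{M}$ iff $\btens{M}$ is a scalar multiple of $\ID{2}$: the $+1$-eigenspace of $\mathrm{adj}$ is one-dimensional (identity line), and the $-1$-eigenspace is the three-dimensional space of trace-free matrices. So $\mathrm{Id}-\mathrm{adj}$ has a one-dimensional kernel, not two. This does not ultimately break your dimension count (a rank-one tensor $\bvec{q}\otimes(\bvec{x}-\bvec{x}_F)^\top$ can never equal a nonzero multiple of $\ID{2}$, so the defining map is still injective), but your stated reasoning and your remark that $\cCGoly{\ell}(F)$ ``loses exactly the right amount of dimension'' are incorrect.

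Second, and more substantively, the injectivity of $\VDIVF$ on $\cCGoly{\ell}(F)$ is not the clean Euler-operator argument you sketch. Writing $\bvec{q}=(P_1,P_2)^\top$ and $\bvec{x}_F=\bvec{0}$, one finds
\[
\VDIVF\big[(\mathrm{Id}-\mathrm{adj})(\bvec{q}\otimes\bvec{x}^\top)\big]
=\begin{pmatrix}(x\partial_x+2y\partial_y+3)P_1 - y\partial_x P_2\\ (2x\partial_x+y\partial_y+3)P_2 - x\partial_y P_1\end{pmatrix}.
\]
The diagonal operators are \emph{weighted} Euler operators (eigenvalue $i+2j+3$, not $i+j+3$), and the cross terms $-y\partial_x P_2$, $-x\partial_y P_1$ couple $P_1$ and $P_2$ and shift monomial exponents. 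So ``$\bvec{q}=0$ degree by degree'' does not follow directly; one has to chase monomial pairs $x^iy^j$ in $P_1$ against $x^{i+1}y^{j-1}$ in $P_2$ and verify that the resulting $2\times 2$ linear system is nonsingular for all $i,j\ge 0$. The paper carries out precisely this monomial computation (its Lemma on the isomorphism $\VDIVF:\cCGoly{\ell}(F)\to\vPoly{\ell-1}(F;\Real^2)$), reducing injectivity to the nonexistence of nonnegative integer solutions of $2(i+j)^2+10(i+j)+10=0$. Your proposal identifies the right mechanism but stops short of this computation, which is the only genuinely nontrivial step in the proof.
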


\begin{proof}
  See Appendix~\ref{sec:results.polynomial.spaces}.
\end{proof}

In what follows, we will also need the decompositions of matrix-valued polynomial functions on mesh elements $T\in\Th$ described hereafter.
We start by recalling the following results (cf. \cite[Lemma 4.4]{Chen.Huang:20*1} and \cite[Lemma 3.6]{Chen.Huang:22}, respectively:
\begin{align}\nonumber %% \label{eq:tPoly=SRoly+cSRoly}
  \vPoly{\ell}(T;\Tless)
  &= \SRoly{\ell}(T) \oplus \cSRoly{\ell}(T),
  \\ \label{eq:vPoly=Holy+cHoly}
  \vPoly{\ell}(T;\Symm)
  &= \Holy{\ell}(T) \oplus \cHoly{\ell}(T),
\end{align}
with
\[
\begin{alignedat}{3}
  \SRoly{\ell}(T)&\coloneq\CURL\vPoly{\ell+1}(T;\Symm),&\qquad
  \cSRoly{\ell}(T)&\coloneq\DEV\left[\vPoly{\ell-1}(T;\Real^3)\otimes(\bvec{x}-\bvec{x}_T)^\intercal\right],
  \\
  \Holy{\ell}(T)&\coloneq\HESS\Poly{\ell+2}(T),&\qquad
  \cHoly{\ell}(T)&\coloneq\SYM\left[\vPoly{\ell-1}(T;\Tless)\times(\bvec{x}-\bvec{x}_T)\right],
\end{alignedat}
\]
where the cross product $\btens{A}\times\bvec{b}$ between a matrix $\btens{A}\in\Real^{3\times 3}$ and a vector $\bvec{v}\in\Real^3$ is performed row-wise.
The following lemma establishes a link between $\SRoly{\ell}(T)$ and $\Holy{\compl,\ell+1}(T)$.

\begin{lemma}[Link between $\SRoly{\ell}(T)$ and $\Holy{\compl,\ell+1}(T)$]\label{lemma:poly.SRfromH}
  It holds
  \begin{equation*}
    \SRoly{\ell}(T) = \CURL \Holy{\compl,\ell+1}(T).
  \end{equation*}
\end{lemma}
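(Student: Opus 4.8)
The plan is to prove the two inclusions separately, using the definitions $\SRoly{\ell}(T) = \CURL\vPoly{\ell+1}(T;\Symm)$ and $\Holy{\compl,\ell+1}(T) = \SYM\left[\vPoly{\ell}(T;\Tless)\times(\bvec{x}-\bvec{x}_T)\right]$. The key algebraic fact I would rely on is a pointwise identity relating the symmetric part of $\bvec{q}\times(\bvec{x}-\bvec{x}_T)$ (for a traceless matrix-valued polynomial $\bvec{q}$) to the curl of a symmetric matrix-valued polynomial. Recall that for a symmetric matrix field $\btens{S}$ one has $\tr(\CURL\btens{S}) = 0$ automatically (so $\CURL\btens{S}$ is always traceless), and conversely there is an identity of the type $\SYM(\btens{A}\times\bvec{x}) = \CURL(\text{something built from }\btens{A}\text{ and }\bvec{x})$ modulo a correction; the precise version is what drives the proof. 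I expect this to be exactly the kind of identity recorded (or easily derivable from the identities recorded) in the appendix on polynomial spaces.

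For the inclusion $\CURL\Holy{\compl,\ell+1}(T) \subseteq \SRoly{\ell}(T)$: I would take a generic element $\SYM(\bvec{q}\times(\bvec{x}-\bvec{x}_T))$ with $\bvec{q}\in\vPoly{\ell}(T;\Tless)$, apply $\CURL$, and show the result is the curl of a \emph{symmetric} polynomial of degree $\ell+1$. The degree count is immediate since $\SYM(\bvec{q}\times(\bvec{x}-\bvec{x}_T))$ has degree $\ell+1$; the only content is that $\CURL$ of this particular field can be re-expressed as $\CURL$ of a symmetric field — equivalently, that the skew part of $\bvec{q}\times(\bvec{x}-\bvec{x}_T)$, whose curl we are discarding, lies in the kernel of $\CURL$ on this class, or that one can absorb it. This should follow from the BGG-type identity combined with $\CURL\HESS = \btens 0$ (so gradients of vector fields, in particular the correction terms, are killed). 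The reverse inclusion $\SRoly{\ell}(T)\subseteq\CURL\Holy{\compl,\ell+1}(T)$ is where the real work is: given $\CURL\btens{S}$ with $\btens{S}\in\vPoly{\ell+1}(T;\Symm)$, I must produce $\bvec{q}\in\vPoly{\ell}(T;\Tless)$ with $\CURL\btens{S} = \CURL\SYM(\bvec{q}\times(\bvec{x}-\bvec{x}_T))$, i.e. $\btens{S} - \SYM(\bvec{q}\times(\bvec{x}-\bvec{x}_T))$ must be a gradient $\GRAD\bvec{v}$. Using the decomposition \eqref{eq:vPoly=Holy+cHoly}, write $\btens{S} = \HESS p + \SYM(\bvec{q}\times(\bvec{x}-\bvec{x}_T))$ for some $p\in\Poly{\ell+3}(T)$ and $\bvec{q}\in\vPoly{\ell}(T;\Tless)$; then $\CURL\btens{S} = \CURL\SYM(\bvec{q}\times(\bvec{x}-\bvec{x}_T))$ since $\CURL\HESS p = \btens 0$, which is precisely the claim. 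So in fact the reverse inclusion is essentially a direct consequence of the second decomposition in \eqref{eq:vPoly=Holy+cHoly} together with $\CURL\HESS=\btens0$.

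Reassembling: the cleanest route is to establish both inclusions via the decomposition $\vPoly{\ell+1}(T;\Symm) = \Holy{\ell+1}(T)\oplus\cHoly{\ell+1}(T)$. Indeed $\SRoly{\ell}(T) = \CURL\vPoly{\ell+1}(T;\Symm) = \CURL\Holy{\ell+1}(T) + \CURL\cHoly{\ell+1}(T) = \CURL\cHoly{\ell+1}(T)$ because $\CURL\Holy{\ell+1}(T) = \CURL\HESS\Poly{\ell+3}(T) = \{\btens0\}$, and $\cHoly{\ell+1}(T) = \Holy{\compl,\ell+1}(T)$ by definition. This reduces the lemma to the single observation $\CURL\HESS = \btens 0$ plus the already-recalled decomposition, so no genuinely new computation is needed. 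The main obstacle I anticipate is purely bookkeeping: making sure the degree indices on $\Poly{}$ and the codomain constraints ($\Symm$ versus $\Tless$) line up correctly across the shift $\ell\rightsquigarrow\ell+1$, and confirming that the definition of $\cHoly{\cdot}$ is literally $\Holy{\compl,\cdot}$ rather than differing by a harmless isomorphism; once that is checked the proof is two lines.
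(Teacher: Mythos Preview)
Your proposal is correct and, in its ``Reassembling'' paragraph, coincides with the paper's proof: both use the decomposition \eqref{eq:vPoly=Holy+cHoly} at degree $\ell+1$ together with $\CURL\HESS=\btens{0}$ to obtain $\SRoly{\ell}(T)=\CURL\tPoly{\ell+1}(T;\Symm)=\CURL\cHoly{\ell+1}(T)$, and $\cHoly{\ell+1}(T)$ is indeed literally $\Holy{\compl,\ell+1}(T)$ in the paper's notation. The earlier exploratory paragraphs about pointwise identities are unnecessary, as you yourself note.
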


\begin{proof}
  Since $\Holy{\compl,\ell+1}(T)\subset\tPoly{\ell+1}(T;\Symm)$, $\CURL \Holy{\compl,\ell+1}(T) \subset \SRoly{\ell}(T)$.
  Let now $\btens{\sigma} \in \SRoly{\ell}(T)$.
  By definition, there is $\btens{\tau} \in \tPoly{\ell+1}(T;\Symm)$ such that $\btens{\sigma} = \CURL \btens{\tau}$.
  Recalling \eqref{eq:vPoly=Holy+cHoly}, $\btens{\tau}$ can be decomposed as $\btens{\tau} = \btens{\tau}_1 + \btens{\tau}_2$ with $(\btens{\tau}_1,\btens{\tau}_2) \in \Holy{\ell}(T)\times \Holy{\compl,\ell}(T)$.
  Using $\CURL \HESS = \bvec{0}$, we have
  $\btens{\sigma} = \CURL \btens{\tau} = \cancel{\CURL \btens{\tau}_1} + \CURL \btens{\tau}_2 \in\CURL\Holy{\compl,\ell+1}(T)$.
  Since $\btens{\sigma}$ is generic in $\SRoly{\ell}(T)$, this concludes the proof.
\end{proof}

\begin{remark}[Extension to negative indices]
  The definitions of $\Roly{\ell}(F)$, $\CGoly{\ell}(F)$, and $\SRoly{\ell}(T)$ naturally extend to $\ell = -1$ (in which case, all of these spaces become trivial).
  Similarly, the definition of $\Holy{\ell}(T)$ extends to $\ell = -2$ and $\ell = -1$, yielding the trivial space in both cases.
\end{remark}

\subsection{Trimmed local polynomial spaces}

For any integer $\ell\ge 0$, trimmed polynomial spaces are obtained from the direct decompositions described in the previous section by lowering the degree of the first component.
Based on this principle we define:
For all $F\in\Fh$, 
\begin{align}\label{eq:RT}  
  \RT{\ell}(F) &\coloneq \Roly{\ell-1}(F) \oplus \cRoly{\ell}(F),
  \\ \label{eq:CGtrimPoly}
  \CGtrimPoly{\ell}(F) &\coloneq \CGoly{\ell-1}(F) \oplus \cCGoly{\ell}(F),
\end{align}
and, for all $T\in\Th$, 
\begin{align}\label{eq:SRtrimPoly}
  \SRtrimPoly{\ell}(T) &\coloneq \SRoly{\ell-1}(T) \oplus \cSRoly{\ell}(T),
  \\ \label{eq:HtrimPoly}
  \HtrimPoly{\ell}(T) &\coloneq \Holy{\ell-2}(T) \oplus \cHoly{\ell}(T).
\end{align}
Notice that, for $\ell = 0$, all of the above spaces become trivial.
For any $(\cvec{X},Y)\in\left\{(\cvec{R},F), (\cvec{CG},F), (\cvec{SR},T), (\cvec{H},T)\right\}$, we denote by $\Xtrimproj{\ell}$ the $L^2$-orthogonal projection on $\Xtrim{\ell}(Y)$.

\subsection{Reconstruction of tangent derivatives on edges}

We will often need to reconstruct tangential derivatives of functions over edges based on their vertex values and $L^2$-orthogonal projections.
Specifically, letting $\ell\ge 0$ be an integer and denoting by $\dotp{}$ the derivative along $E$ in the direction of $\tangent_E$, the tangential derivative reconstruction 
$\DerE{\ell}: \Real\times\Real\times\Poly{\ell-1}(E) \rightarrow\Poly{\ell}(E)$ is such that,
for any $(v_{V_1},v_{V_2},v_E) \in \Real\times\Real\times\Poly{\ell-1}(E)$,
\begin{equation} \label{eq:IPP.E}
  \int_E \DerE{\ell}(v_{V_1},v_{V_2},v_E)\, r
  = - \int_E v_E\, \partial_{\tE} r  
  + \jump{E}{v_V\, r}
  \qquad \forall r \in \Poly{\ell}(E),
\end{equation}
where $\jump{E}{\cdot}$ denotes the difference between vertex values on an edge such that, for any function $\phi\in C^0(\overline{E})$ and any family $\{w_{V_1},w_{V_2}\}$ of vertex values (possibly such that $w_{V_1} = w_{V_2} = 1$)
\[
\jump{E}{w_V\phi}\coloneq w_{V_2} \phi(\bvec{x}_{V_2}) - w_{V_1} \phi(\bvec{x}_{V_1}).
\]
When the arguments are vector- or matrix-valued, $\DerE{\ell}$ acts component-wise.
Noticing that $\DerE{\ell}$ coincides with the one-dimensional HHO gradient (cf., e.g., \cite[Eq. (4.37)]{Di-Pietro.Droniou:20}), it is readily inferred that
\begin{equation}\label{eq:DerE:commutation.property}
  \DerE{\ell}(\phi(\bvec{x}_{V_1}),\phi(\bvec{x}_{V_2}),\lproj{\ell-1}{E}\phi)
  = \lproj{\ell}{E}\dotp{\phi}
  \qquad\forall\phi\in H^1(E).
\end{equation}

\subsection{Notation and basic results on traces}\label{sec:setting:shortcuts.traces}

Given a family of linearly independent orthonormal vectors $\bvec{w} = \lbrace \bvec{w}_i \rbrace_{i \in I} \subset \Real^3$, 
we define the trace of a vector $\bvec{u}$ with respect to this family by $\bvec{u}_{\bvec{w}} \coloneq (\bvec{u}\cdot\bvec{w}_i)_{i \in I} \in \Real^I$
We also consider its injection into the original space
\begin{equation}\label{eq:injection.3d}
  \widehat{\bvec{u}}_{\bvec{w}}
  \coloneq \sum_{i \in I} (\bvec{u}\cdot\bvec{w}_i) \bvec{w}_i.
\end{equation}
Likewise, for two families of linearly independent orthonormal vectors 
$\bvec{v} = \lbrace \bvec{v}_i \rbrace_{i \in I}$, $\bvec{w} = \lbrace \bvec{w}_j \rbrace_{j \in J}$, 
we define the trace of a matrix $\bvec{A}$ with respect to these families by
$\bvec{A}_{\bvec{v}\bvec{w}} \coloneq (\bvec{v}_i^\top\bvec{A}\bvec{w}_j)_{(i,j) \in I\times J} \in \Real^{I\times J}$.
We also consider its injection into the original space 
$\bvec{\widehat{A}}_{\bvec{v}\bvec{w}} \coloneq \sum_{(i,j) \in I\times J} (\bvec{v}_i^\top\bvec{A}\bvec{w}_j)  \bvec{v}_i \otimes\bvec{w}_j$.

The notations defined above are used in what follows for traces on faces and edges as described hereafter.
For a face $F$, we consider an orthonormal basis $\lbrace \tFo, \tFd \rbrace$ of the plane tangent to $F$, and, for any vector-valued field $\bvec{w}:F\to\Real^3$ and any matrix-valued field $\btens{\tau}:F\to\Real^{3\times 3}$, write $\trtF{\bvec{w}}$, $\trnnF{\tau}$, $\trntF{\bvec{\tau}}$, and $\trttF{\btens{\tau}}$ with $\normal = \lbrace \nF \rbrace$ and $\tangent = \lbrace \tFo, \tFd \rbrace$.
Similarly, for any edge $E$, we consider an orthonormal basis $\lbrace \nEo, \nEd \rbrace$ of the plane normal to $E$, and, for any vector-valued field $\bvec{w}:E\to\Real^3$ and any matrix-valued field $\btens{\tau}:E\to\Real^{3\times 3}$, write $\trnE{\bvec{w}}$, $\trnnE{\btens{\tau}}$, $\trntE{\bvec{\tau}}$, and $\trttE{\tau}$ where $\normal = \lbrace \nEo, \nEd \rbrace$ and $\tangent = \{\tE\}$.

The following lemma shows that traces of functions in trimmed spaces lie in trimmed spaces.
Its proof is similar to that of \cite[Proposition~8]{Di-Pietro.Droniou:23} and is omitted for the sake of conciseness.

\begin{lemma}[Traces of trimmed spaces]
  %%   For any face $F\in\Fh$ and any edge $E\in\EF$,
  %%   \begin{alignat}{4}
  %%     \bvec{v} \cdot \nFE &\in\Poly{k-1}(E)
  %%     &\qquad& \forall \bvec{v}\in\RT{k}(F), \label{eq:poly.trace.RT}\\
  %%     \tE^\top\btens{\sigma}\nFE&\in\Poly{k-1}(E)
  %%     &\qquad& \forall \btens{\sigma}\in\CGtrimPoly{k}(F), \label{eq:poly.trace.CG}\\
  %%     \tr(\btens{\sigma}) &\in \Poly{k-1}(F)
  %%     &\qquad&\forall\btens{\sigma}\in\CGtrimPoly{k}(F), \label{eq:poly.tr.CG}
  %%   \end{alignat}
  %%   \begin{equation}\label{eq:poly.trace.CG}
  %%     \tE^\top\btens{\sigma}\nFE\in\Poly{k-1}(E)
  %%     \qquad \forall \btens{\sigma}\in\CGtrimPoly{k}(F),
  %%   \end{equation}
  For any element $T\in\Th$ and any face $F\in\FT$, it holds
  \begin{alignat}{4}
    \trttF{(\btens{\sigma} \nF)}&\in\vPoly{k-1}(F;\Real^2)
    &\qquad&\forall \btens{\sigma}\in\SRtrimPoly{k}(T), \label{eq:poly.trace.SR}\\
    \trttF{(\btens{\sigma}\times\nF)} &\in \CGtrimPoly{k}(F)
    &\qquad&\forall\btens{\sigma}\in\HtrimPoly{k}(T). \label{eq:poly.trace.H}
  \end{alignat}
\end{lemma}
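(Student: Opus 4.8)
The plan is to compute the two traces explicitly on the generators of the trimmed spaces, exploiting the direct decompositions \eqref{eq:SRtrimPoly} and \eqref{eq:HtrimPoly}, and then match the result against the right-hand side trimmed spaces \eqref{eq:RT} and \eqref{eq:CGtrimPoly}. For the bound on the polynomial degree the key observation is that taking a trace of a polynomial of degree $\le\ell$ yields a polynomial of degree $\le\ell$, so the only nontrivial part is to show that the \emph{first} (curl-type) component of the trace lives in the \emph{lowered-degree} piece of the target decomposition; the complementary (Koszul-type) piece is easy because those generators have the explicit form $\bvec{p}\otimes(\bvec{x}-\bvec{x}_\bullet)^\intercal$ (up to $\DEV$, $\SYM$, or $\mathrm{Id}-\mathrm{adj}$) whose tangential traces are again of Koszul type.

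For \eqref{eq:poly.trace.SR}: write $\btens{\sigma} = \CURL\btens{p} + \DEV[\bvec{q}\otimes(\bvec{x}-\bvec{x}_T)^\intercal]$ with $\btens{p}\in\vPoly{k}(T;\Symm)$ and $\bvec{q}\in\vPoly{k-1}(T;\Real^3)$ (here I use that $\SRtrimPoly{k}(T) = \SRoly{k-1}(T)\oplus\cSRoly{k}(T)$ and, via Lemma~\ref{lemma:poly.SRfromH}, that $\SRoly{k-1}(T) = \CURL\Holy{\compl,k}(T)\subset\CURL\tPoly{k}(T;\Symm)$, so the first component is the curl of a symmetric tensor of degree $\le k$). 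Applying $\nF$ on the right and then extracting the $\tangent\tangent$-part is a linear operation preserving degree, so $\trttF{(\btens{\sigma}\nF)}\in\vPoly{k-1}(F;\Real^2)$ will follow once I verify that the curl piece contributes degree $\le k-1$: this is the crux, and it should come from the fact that differentiating $\btens{p}$ (degree $\le k$) drops the degree to $\le k-1$, while the multiplication by the fixed vector $\nF$ and the projection onto the tangent plane do not raise it. I would double-check whether the Koszul part $\DEV[\bvec{q}\otimes(\bvec{x}-\bvec{x}_T)^\intercal]\nF$ could a priori reach degree $k$ — it is $\bvec{q}\,((\bvec{x}-\bvec{x}_T)\cdot\nF) - \frac13(\bvec{q}\cdot(\bvec{x}-\bvec{x}_T))\nF$, which has degree $\le k$, not $\le k-1$ — so the claim \eqref{eq:poly.trace.SR} must rely on a cancellation or on the statement being only about membership in $\vPoly{k-1}(F;\Real^2)$ after the full $\SR^{\trimmed}$ combination; I expect the resolution is that the degree-$k$ contributions of the two pieces cancel in the tangential trace, exactly as in the non-trimmed case. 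This cancellation is the main obstacle and is where I would mimic closely the argument of \cite[Proposition~8]{Di-Pietro.Droniou:23}.

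For \eqref{eq:poly.trace.H}: write $\btens{\sigma} = \HESS w + \SYM[\btens{A}\times(\bvec{x}-\bvec{x}_T)]$ with $w\in\Poly{k}(T)$ (the degree-$\le k$ piece $\Holy{k-2}(T) = \HESS\Poly{k}(T)$) and $\btens{A}\in\vPoly{k-1}(T;\Tless)$. Form $\btens{\sigma}\times\nF$ and take its $\tangent\tangent$-trace. For the Hessian piece, $(\HESS w)\times\nF$ restricted to the tangent plane should be expressible (via a two-dimensional Frobenius-type identity, e.g. $\bvec{M}\times\nF = \mathrm{adj}$-related rotation of $\bvec{M}$ on the tangent plane, using $(\tFo,\tFd,\nF)$ right-handed) as $\CURLF$ applied to the surface gradient of $w_{|F}$ — which lies in $\CGoly{k-2}(F) = \CURLF\vPoly{k-1}(F;\Real^2)$ — so it contributes the lowered-degree curl part of $\CGtrimPoly{k}(F)$. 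For the $\SYM[\btens{A}\times(\bvec{x}-\bvec{x}_T)]$ piece, a short computation should show its $\tangent\tangent$-trace twisted by $\times\nF$ has the $(\mathrm{Id}-\mathrm{adj})[\vPoly{k-1}(F;\Real^2)\otimes(\bvec{x}-\bvec{x}_F)^\intercal]$ structure, i.e. lands in $\cCGoly{k}(F)$; here I would need to handle the difference between $(\bvec{x}-\bvec{x}_T)$ and $(\bvec{x}-\bvec{x}_F)$, absorbing the constant shift into the lower-degree curl part. The two-dimensional tensor identities relating $\times\nF$, $\mathrm{adj}$, and $\CURLF$ on the tangent plane of $F$ are the technical heart of this second case; once those are in place, the degree bookkeeping is routine. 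I would therefore state those identities first as a lemma (or cite them), then run the generator-by-generator verification.
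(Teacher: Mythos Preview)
The paper does not prove this lemma; it says the argument is similar to \cite[Proposition~8]{Di-Pietro.Droniou:23} and omits it. Your generator-by-generator strategy, checking the trace on each summand of the trimmed decompositions, is the natural approach and is what that reference does.

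For \eqref{eq:poly.trace.SR}, however, there is a gap in your plan. You correctly compute, for $\btens{\sigma}=\DEV[\bvec{q}\otimes(\bvec{x}-\bvec{x}_T)^\intercal]\in\cSRoly{k}(T)$,
\[
\btens{\sigma}\,\nF = \bvec{q}\,\big((\bvec{x}-\bvec{x}_T)\cdot\nF\big) - \tfrac13\big(\bvec{q}\cdot(\bvec{x}-\bvec{x}_T)\big)\nF,
\]
and then, seeing a nominal degree $k$, you conjecture that the claim must rely on a cancellation against the $\SRoly{k-1}$ piece and call this the ``main obstacle''. No such cancellation occurs or is needed. The two elementary observations you are missing are: (i) on a planar face $F$ with unit normal $\nF$, the scalar $(\bvec{x}-\bvec{x}_T)\cdot\nF$ is \emph{constant} (this is precisely the content of the cited proposition), so the first term already lies in $\vPoly{k-1}(F;\Real^3)$; and (ii) the second term is a scalar multiple of $\nF$, hence is annihilated by the tangential trace $\trttF{(\cdot)}$. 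Thus each summand of $\SRtrimPoly{k}(T)$ individually satisfies \eqref{eq:poly.trace.SR}, and the obstacle you anticipate does not exist.

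For \eqref{eq:poly.trace.H} your plan is correct. The Hessian piece is in fact easier than you suggest: $\HESS w\in\tPoly{k-2}(T;\Symm)$, so its trace lies in $\tPoly{k-2}(F;\Real^{2\times2})\subset\tPoly{k-1}(F;\Real^{2\times2})\subset\CGtrimPoly{k}(F)$ by degree alone. For the $\cHoly{k}$ piece, your shift-and-compute strategy works; in the $(\tFo,\tFd,\nF)$ frame one finds that $\trttF{\big(\SYM[\btens{A}\times(\bvec{x}-\bvec{x}_F)]\times\nF\big)}$ depends only on the entries $(A_{13},A_{23})$ and equals $\tfrac12(\mathrm{Id}-\mathrm{adj})\big[(A_{13},A_{23})^\top\otimes(\bvec{x}-\bvec{x}_F)^\intercal\big]\in\cCGoly{k}(F)$, exactly as you predict.
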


\subsection{Integration by parts formulas}

A key element for the DDR-inspired construction are the integration by parts formulas collected in this section, which are used both to identify the components of the discrete spaces and to reconstruct the discrete differentials and the corresponding potentials.

\subsubsection{Integration by parts formulas for $\Hdevgrad{T}{\Real^3}$}

Let $T\in\Th$ and let $\bvec{v}:T:\to\Real^3$ be a vector-valued function, which we assume as smooth as needed in what follows.
For all $\btens{\sigma}:T\to\Tless$ smooth enough, it holds
\begin{equation} \label{eq:IPP.DG.T}
  \begin{aligned}
    \int_T\DEV\GRAD{\bvec{v}} : \btens{\sigma}
    &=
    -\int_T\bvec{v}\cdot\VDIV\btens{\sigma}
    + \sum_{F\in\FT}\omega_{TF}\int_F \bvec{v}^\top \btens{\sigma} \nF \\
    &= -\int_T \bvec{v} \cdot \VDIV\btens{\sigma}
    + \sum_{F \in \FT} \wTF \int_F \left(
    \trInF{\bvec{v}}^\top \btens{\sigma} \nF
    + \trItF{\bvec{v}}^\top \btens{\sigma} \nF
    \right),
  \end{aligned}
\end{equation}
where we have used the decomposition $\bvec{v} = \trInF{\bvec{v}} + \trItF{\bvec{v}}$ of the trace of $\bvec{v}$ on $F$ into its normal and tangential components to pass to the second line.

Let now $F \in \FT$.
For all $\bvec{w} : F \to \Real^2$ smooth enough and valued in the tangent space of $F$, it holds
\begin{equation} \label{eq:IPP.DG.Ftn}
  \begin{aligned}
    \int_F \trntF{(\DEV\GRAD \bvec{v})} \cdot \bvec{w}
    &= \int_F \trntF{(\GRAD \bvec{v})} \cdot \bvec{w}
    =  \int_F \GRADF (\bvec{v}\cdot\nF) \cdot \bvec{w}
    \\
    &= - \int_F (\bvec{v}\cdot\nF) \DIVF \bvec{w} + \sum_{E \in \EF} \wFE \int_E (\bvec{v}  \cdot \nF)(\bvec{w}\cdot\nFE),
  \end{aligned}
\end{equation}
where, we have used the fact that the components extracted by $\trntF{(\DEV\GRAD \bvec{v})}$ are not affected by the $\DEV$ operator in the first equality
and a standard integration by parts on $F$ to conclude.
For all $\btens{\sigma} : F \to \Real^{2\times2}$ smooth enough and matrix-valued in the tangent space of $F$, on the other hand, we have
\begin{equation} \label{eq:IPP.DG.Ftt}
  \begin{aligned}
    \int_F \trttF{(\DEV\GRAD\bvec{v})} \tdot \bvec{\sigma} 
    &= \int_F \GRADF \trtF{\bvec{v}} \tdot \btens{\sigma} - \frac{1}{3} \int_F \tr (\GRAD \bvec{v}) \ID{2} \tdot \btens{\sigma}\\
    &= - \int_F \trtF{\bvec{v}} \cdot \VDIVF\btens{\sigma}
    - \frac{1}{3} \int_F \DIV \bvec{v}~ \tr \btens{\sigma}\\
    &\quad+ \sum_{E \in \EF} \wFE \int_E \left[
      (\bvec{v} \cdot \tE) \tE^\top\btens{\sigma}\nFE
      +(\bvec{v} \cdot \nFE) \nFE^\top\btens{\sigma}\nFE
      \right],
  \end{aligned}
\end{equation}
where, in the second equality, we have used an integration by parts for the first term and decomposed the tangent trace of $\bvec{v}$ on $F$
as $\trtF{\bvec{v}} = (\bvec{v}\cdot\tangent_E)\tangent_E + (\bvec{v}\cdot\normal_{FE})\normal_{FE}$ 
after noticing that $(\tangent_E,\normal_{FE})$ forms an orthonormal basis of the plane orthogonal to $E$ at each point of $E$.

\subsubsection{Integration by parts formulas for $\Hsymcurl{T}{\Tless}$}

Let $T \in \Th$.
For all $\btens{\tau}:T:\to\Tless$ and $\btens{\sigma}:T\to\Symm$ smooth enough, it holds
\begin{equation} \label{eq:IPP.SC.T}
  \begin{aligned}
    \int_T\SYM\CURL{\btens{\tau}}:\btens{\sigma}
    =
    \int_T\CURL{\btens{\tau}}:\btens{\sigma}
    &=
    \int_T\btens{\tau}:\CURL \btens{\sigma}
    - \sum_{F\in\FT}\omega_{TF}\int_F (\btens{\tau} \times \nF) : \btens{\sigma}\\
    &=
    \int_T\btens{\tau}:\CURL \btens{\sigma}
    + \sum_{F\in\FT}\omega_{TF}\int_F \btens{\tau} : (\btens{\sigma} \times \nF).
  \end{aligned}
\end{equation}

Let now $F\in\FT$.
We have, for $r:F\to\Real$ smooth enough,
\begin{equation} \label{eq:IPP.SC.Fnn}
  \int_F \trnnF{(\SYM\CURL\btens{\tau})}\, r
  = \int_F \ROTF \trntF{\tau}\, r
  = \int_F \trntF{\tau} \cdot \CURLF r - \sum_{E \in \EF} \wFE \int_E (\nF^\top\btens{\tau}\tE)\, r,
\end{equation}
where we have used the fact that the component extracted by $\trnnF{(\SYM\CURL\btens{\tau})}$ is on the diagonal, hence it is not affected by the $\SYM$ operator 
(so that, in particular, $\trnnF{(\SYM\CURL\btens{\tau})} = \trnnF{(\CURL\btens{\tau})}  = \ROTF \trntF{\tau}$).

For the tangential-tangential component of $\btens{\tau}$, standard integration by parts formulas on faces (corresponding, respectively, to \cite[Eqs.~(3.12) and (3.15)]{Di-Pietro.Droniou.ea:20}) give:
\begin{equation} \label{eq:IPP.SC.Ftt}
  \begin{aligned}
    \int_F \DIVF \VROTF \trttF{\tau} r 
    &= -\int_F \VROTF \trttF{\tau} \cdot \GRADF r
    + \sum_{E \in \EF} \wFE \int_E (\VROTF \trttF{\tau} \cdot \nFE) r \\
    &= -\int_F \trttF{\tau} \tdot \CURLF \GRADF r
    + \sum_{E \in \EF} \wFE \int_E  
    (\trttF{\tau} \tE)\cdot\GRADF r
    \\
    &\quad
    + \int_E (\VROTF \trttF{\tau} \cdot \nFE)\, r.
  \end{aligned}
\end{equation}
For all $E \in \EF$, it holds:
\begin{equation} \label{eq:IPP.SC.E1}
  \begin{aligned}
    \int_E (\trttF{\tau} \tE)\cdot\GRADF r
    &= \int_E\left[
      (\tE^\top  \btens{\tau}  \tE)\, \partial_{\tE} r 
      + (\nFE^\top  \btens{\tau}  \tE)\, \partial_{\nFE} r
      \right]
    \\
    &=
    \int_E\left[
      - \left ( \nFE^\top  \btens{\tau}  \nFE + \nF^\top  \btens{\tau}  \nF \right )\, \partial_{\tE} r
      +  (\nFE^\top  \btens{\tau}  \tE)\,\partial_{\nFE} r
      \right], 
  \end{aligned}
\end{equation}
where we have
  written $\GRADF r = \dotp{r}\, \tE + \partial_{\nFE} r\, \nFE$ in the first equality and
used the fact that $\btens{\tau}$ is traceless on the last line (so that, the trace being an invariant and $(\tangent_E,\normal_{FE},\normal_F)$ an orthonormal basis of $\Real^3$,
$\tE^\top  \btens{\tau}  \tE + \nFE^\top  \btens{\tau}  \nFE + \nF^\top  \btens{\tau}  \nF = 0$).
Moreover, we have
\begin{multline} \label{eq:IPP.SC.E2}
  \int_E (\VROTF \trttF{\tau} \cdot \nFE)\, r
  =  \int_E \ROTF (\nFE^\top \trttF{\tau})\, r
  = \int_E\left[
    \partial_{\tE} (\nFE^\top  \btens{\tau}  \nFE)\, r
    - \partial_{\nFE} (\nFE^\top  \btens{\tau}  \tE)\, r
    \right]
  \\
  =
  - \int_E (\nFE^\top  \btens{\tau}  \nFE)\,\partial_{\tE}r
  +  \jump{E}{(\nFE^\top  \btens{\tau}  \nFE)\, r}
  - \int_E\left[
    \nFE^\top   \GRAD (\btens{\tau}  \tE )  \nFE
    \right] r,
\end{multline}
where we have used an integration by parts on the first term to conclude.
Plugging \eqref{eq:IPP.SC.E1} and \eqref{eq:IPP.SC.E2} into \eqref{eq:IPP.SC.Ftt} finally gives
\begin{equation} \label{eq:IPP.SC.Ftt2}
  \begin{aligned}
    &\int_F \DIVF \VROTF \trttF{\tau} r 
    = -\int_F \trttF{\tau} \tdot \CURLF \GRADF r
    + \sumEF (\nFE^\top  \btens{\tau}  \tE)\,\partial_{\nFE} r \\
    &\qquad -\sumEF \left (2 \nFE^\top  \btens{\tau}  \nFE + \nF^\top  \btens{\tau}  \nF \right ) \partial_{\tE} r
    - \sumEF \left[
      \nFE^\top   \GRAD (\btens{\tau}  \tE )  \nFE
      \right] r  \\
    &\qquad + \sum_{E\in\EF}\wFE \jump{E}{(\nFE^\top  \btens{\tau}  \nFE)\, r}.
  \end{aligned}
\end{equation}

\subsubsection{Integration by parts formulas for $\Hdivdiv{T}{\Symm}$}

Let $T \in \Th$, for all $\btens{\upsilon}:T:\to\Symm$ and $v:T\to\Real$ smooth enough, it holds (cf. \cite[Lemma~4.1]{Chen.Huang:22})
\begin{multline} \label{eq:IPP.DD.T}
  \int_T\DIV\VDIV\btens{\upsilon}~v
  =
  \int_T\btens{\upsilon}:\HESS v
  - \sum_{F\in\FT}\omega_{TF}\sum_{E\in\EF}\omega_{FE}\int_E(\normal_{FE}^\top\btens{\upsilon}\normal_F)~v
  \\
  - \sum_{F\in\FT}\omega_{TF}\int_F\upsilon_{\normal\normal,F}~\partial_{\normal_F} v
  - \sum_{F\in\FT}\omega_{TF}\int_F\left[
    2\DIV_F(\trntF{\upsilon}) + \partial_{\normal_F}\upsilon_{\normal\normal,F}
    \right]~v.
\end{multline}

%------------------------------------------------------------------------------%

\section{Discrete divdiv complex}\label{sec:discrete.complex}

Throughout the rest of this work, we fix an integer $k\ge 0$ corresponding to the polynomial degree of the discrete complex.
The focus of this section is on the construction of the local DDR complex mimicking \eqref{eq:complex} on a mesh element $T\in\Th$ and the study of its algebraic properties.
The analytical properties for the divdiv operator are studied in Section \ref{sec:biharmonic} in the context of an application to a biharmonic problem.
An in-depth study of the analytical properties of the other spaces and operators is postponed to a future work.

\subsection{Local discrete spaces}

The discrete counterparts of the spaces $\Hdevgrad{T}{\Real^3}$, $\Hsymcurl{T}{\Tless}$, and $\Hdivdiv{T}{\Symm}$ are, respectively, the spaces $\uHdevgrad{T}$, $\uHsymcurl{T}$, and $\uHdivdiv{T}$ defined as follows:
\[
\begin{aligned}
  \uHdevgrad{T}
  \coloneq\bigg\{
  &\uvec{v}_T = \big(
  \bvec{v}_T, %
  (\dofnF{v}, \doftF{v},\gdofF{v})_{F\in\FT}, %
  (\doftE{v}, \dofnE{v}, \gdofE{v})_{E\in\ET}, %
  (\bvec{v}_V, \gdofV{v})_{V\in\VT}
  \big)\st\\
  &\qquad
  \text{$\bvec{v}_T \in \vPoly{k-1}(T;\Real^3)$,}\\
  &\qquad
  \text{
    %$\bvec{v}_F \in \vPoly{k-1}(F;\Real^3)$ 
    $\dofnF{v} \in \Poly{k}(F)$, $\doftF{v} \in \vPoly{k-1}(F;\Real^2)$, 
    and $\gdofF{v} \in \Poly{k-1}(F)$ for all $F\in\FT$,
  }\\
  &\qquad
  \text{
    $\doftE{v} \in \Poly{k-1}(E)$, $\dofnE{v} \in \vPoly{k}(E; \Real^2)$ and $\gdofE{v} \in \nPoly{k}{E}$ for all $E\in\ET$,
  }\\
  &\qquad
  \text{
    $\bvec{v}_V\in\Real^{3}$ and $\gdofV{v} \in \Real^{3\times 3}$ for all $V\in\VT$
  }
  \bigg\},
\end{aligned}
\]
\[
\begin{aligned}
  \uHsymcurl{T}
  \coloneq\bigg\{
  &\utens{\tau}_T = \big(
  \srtvec{\tau},
  (\rttvec[F]{\tau},
  \cgtvec{\tau}
  )_{F\in\FT}, (\btens{\tau}_E, \doftE{\tau}, \cdofE{v})_{E\in\ET}, (\btens{\tau}_V)_{V\in\VT}
  \big)\st\\
  &\qquad
  \text{$\srtvec{\tau} \in \SRtrimPoly{k}(T)$,} \\
  &\qquad
  \text{$\rttvec[F]{\tau} \in \RT{k+1}(F)$ and $\cgtvec[F]{\tau} \in \CGtrimPoly{k}(F)$ for all $F\in\FT$,} \\
  &\qquad
  \text{$\btens{\tau}_E \in \nPoly{k}{E}$, $\doftE{\tau} \in \vPoly{k+1}(E;\Real^2)$ and 
    $\cdofE{\tau} \in \nPoly{k+1}{E}$ for all $E\in\ET$,} \\
  &\qquad
  \text{$\btens{\tau}_V \in \Tless$ for all $V\in\VT$}
  \bigg\},
\end{aligned}
\]
\[
\begin{aligned}
  \uHdivdiv{T}
  \coloneq\bigg\{
  &\utens{\upsilon}_T = \big(
  \htvec{\upsilon},
  (\upsilon_F, D_{\btens{\upsilon},F})_{F\in\FT},
  (\btens{\upsilon}_E)_{E\in\ET}
  \big)\st
  \\
  &\qquad
  \text{%
    $\htvec{\upsilon}\in\HtrimPoly{k}(T)$,
  }
  \\
  &\qquad
  \text{%
    $\upsilon_F\in\Poly{k+1}(F)$ and $D_{\btens{\upsilon},F}\in\Poly{k+1}(F)$ for all $F\in\FT$,
  }
  \\
  &\qquad
  \text{%
    and $\btens{\upsilon}_E\in\nSPoly{k+1}{E}$ for all $E\in\ET$
  }
  \bigg\}.
\end{aligned}
\]
The meaning of the polynomial components in these spaces is provided by the interpolators
$\IDGrad{T}: \bvec{C}^1(\overline{T}; \Real^3)\to\uHdevgrad{T}$,
$\ISCurl{T}: \btens{H}^3(T;\Tless)\to\uHsymcurl{T}$,
and $\IDivDiv{T}: \btens{H}^2(T;\Symm)\to\uHdivdiv{T}$ such that, for all $(\bvec{v},\btens{\tau},\btens{\upsilon})\in\bvec{C}^1(\overline{T}; \Real^3)\times \btens{H}^3(T;\Tless)\times \btens{H}^2(T;\Symm)$,
\begin{equation}\label{eq:IDGrad}
  \begin{aligned}
    \IDGrad{T}\bvec{v}
    \coloneq\Big(
    &\vlproj{k-1}{T}\bvec{v},
    \big(\lproj{k}{F}(\bvec{v}\cdot\nF),\vlproj{k-1}{F}(\trtF{\bvec{v}}),
    \lproj{k-1}{F}(\DIV \bvec{v})\big)_{F\in\FT},\\
    &\big(\lproj{k-1}{E}(\bvec{v} \cdot \tE),
    \vlproj{k}{E}(\trnE{\bvec{v}}),\tlproj{k}{E}(\trnnE{\GRAD \bvec{v})}\big)_{E\in\ET},
    \\
    &\big(\bvec{v}(\bvec{x}_V),{\GRAD \bvec{v}}(\bvec{x}_V)\big)_{V\in\VT}
    \Big),
  \end{aligned}
\end{equation}
\[ %% \begin{equation}\label{eq:ISCurl}
\begin{aligned}
  \ISCurl{T}\btens{\tau}
  \coloneq\Big(
  &\SRtrimproj{k}\btens{\tau},
  \big(\RTproj[F]{k+1}\trntF{\btens{\tau}},
  \CGtrimproj[F]{k}\trttF{\btens{\tau}}\big)_{F\in\FT},\\
  &\big(\tlproj{k}{E}\trnnE{\btens{\tau}},\vlproj{k+1}{E}\trntE{\btens{\tau}},\tlproj{k+1}{E}(\trnnE{(\GRAD (\btens{\tau}  \tE))}\big)_{E\in\ET},
  \big({\btens{\tau}}(\bvec{x}_V)\big)_{V\in\VT}
  \Big),
\end{aligned}
\] %% \end{equation}
\begin{equation}\label{eq:IDivDiv}
  %% \begin{aligned}
  \IDivDiv{T}\btens{\upsilon}
  \coloneq\Big(
  \Htrimproj{k}\btens{\upsilon},
  \big(
  \lproj{k+1}{F}(\upsilon_{\normal\normal,F}),
  \lproj{k+1}{F}(2\DIV_F(\btens{\upsilon}_{|F}\normal_F) + \partial_{\normal_F}\upsilon_{\normal\normal,F})
  \big)_{F\in\FT},
  (\tlproj{k+1}{E}\btens{\upsilon})_{E\in\ET}
  \Big).
  %% \end{aligned}
\end{equation}

In what follows, for $\bullet\in\{\DEV\GRAD,\SYM\CURL,\DIV\VDIV\}$ and any $Y\in\FT\cup\ET$, we denote by $\uvec{X}_{\bullet,Y}^k$ and $\uvec{I}_{\bullet,Y}^k$ the restrictions of $\uvec{X}_{\bullet,T}^k$ and $\uvec{I}_{\bullet,T}^k$ to $Y$, obtained collecting the polynomial components on $Y$ and its boundary.

\subsection{Local operators}

\subsubsection{Discrete devgrad operator}

Let $F\in\FT$.
The discrete counterpart of the normal-tangential component of the discrete devgrad operator is obtained mimicking \eqref{eq:IPP.DG.Ftn}. 
Specifically, we let $\DGntF:\uHdevgrad{F}\to\RT{k+1}(F)$ be such that,
for all $\uvec{v}_{F} \in \uHdevgrad{F}$ and all $\bvec{w} \in \RT{k+1}(F)$,
\begin{equation} \label{eq:defDGntF}
  \int_F \DGntF \uvec{v}_F \cdot \bvec{w}
  =
  - \int_F \dofnF{v} \DIVF \bvec{w}
  + \sum_{E \in \EF} \wFE \int_E 
  (\dofnE{v} \cdot \nF)
  (\bvec{w}\cdot\nFE).
\end{equation}

The discrete counterpart of the tangential-tangential component is, on the other hand,
obtained mimicking \eqref{eq:IPP.DG.Ftt}. 
Specifically, we let
$\DGttF:\uHdevgrad{F}\to\CGtrimPoly{k}(F)$ be such that, 
for all $\uvec{v}_F \in \uHdevgrad{F}$ and all $\btens{\sigma}\in\CGtrimPoly{k}(F)$,
\begin{equation}\label{eq:defDGttF}
  \begin{aligned}
    \int_F \DGttF\uvec{v}_F \tdot \btens{\sigma}
    &\quad=
    - \int_F \doftF{v} \cdot \VDIVF\btens{\sigma}
    - \frac13\int_F\gdofF{v}\tr\btens{\sigma}
    \\
    &\qquad
    + \sumEF 
    \doftE{v}
    \,\tE^\top\btens{\sigma}\nFE
    + \sumEF 
    (\dofnE{v} \cdot \nFE)
    \, \nFE^\top\btens{\sigma}\nFE.
  \end{aligned}
\end{equation}

Recalling \eqref{eq:IPP.DG.T}, the element devgrad operator $\DGT : \uHdevgrad{T}\to\SRtrimPoly{k}(T)$
is defined such that, for all $\uvec{v}_T \in \uHdevgrad{T}$ and all $\bvec{\sigma} \in \SRtrimPoly{k}(T)$,
\begin{equation}\label{eq:defDGT}
  \int_T \DGT\uvec{v}_T \tdot \btens{\sigma}
  = 
  -\int_T \bvec{v}_T \cdot \VDIV\btens{\sigma}
  + \sum_{F \in \FT} \wTF \int_F\left(
  \dofnF{v}\, \nF^\top \btens{\sigma} \nF + \hdoftF{v}^\top \btens{\sigma} \nF
  \right),
\end{equation}
where we remind the reader that $\hdoftF{v}$ denotes the injection of $\doftF{v}$ into $\Real^3$.

The discrete devgrad operator $\DG{T}:\uHdevgrad{T}\to\uHsymcurl{T}$ acting between spaces of the discrete complex is defined, for all $\uvec{v}_T \in \uHdevgrad{T}$, by
\begin{equation} \label{eq:defuDG}
  \begin{aligned}
    \DG{T} \uvec{v}_T
    \coloneq\bigg(
    &\DGT\uvec{v}_T,
    \Big(
    \DGntF\uvec{v}_F,
    \DGttF\uvec{v}_F
    \Big)_{F\in\FT},
    \\
    &\Big(
    \gdofE{v} - \tfrac13\big(
    \tr \gdofE{v} +
    \DerE{k}(\bvec{v}_{V_1} \cdot \tE, \bvec{v}_{V_2} \cdot \tE, \doftE{v})
    \big)
    \ID{2}, \\
    &\hphantom{\big(}
    \DerE{k+1}(\trnE{(\bvec{v}_{V_1})},\trnE{(\bvec{v}_{V_2})}, \dofnE{v}),
    %%     \\
    %%     &\hphantom{\big(}
    \DerE{k+1}(\trnnE{(\gdofV[V_1]{v})},\trnnE{(\gdofV[V_2]{v})}, \gdofE{v})
    \Big)_{E\in\ET},
    \\
    &\left( \DEV \gdofV{v} \right)_{V\in\VT}
    \bigg).
  \end{aligned} 
\end{equation}
In what follows, the restriction of $\DG{T}$ to a face or edge $Y\in\FT\cup\ET$ of $T$, obtained collecting the components on $Y$ and its boundary, will be denoted by $\DG{Y}$.%

\begin{lemma}[Local commutation property]
  The following commutation property holds
  \begin{equation}\label{eq:DGT.commutativity}
    \DG{T}(\IDGrad{T}\bvec{v}) = \ISCurl{T}(\DEV \GRAD \bvec{v})\qquad
    \forall \bvec{v} \in \bvec{H}^3(T;\Real^3).
  \end{equation}
\end{lemma}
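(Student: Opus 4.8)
The plan is to verify the equality $\DG{T}(\IDGrad{T}\bvec{v}) = \ISCurl{T}(\DEV\GRAD\bvec{v})$ component by component, matching the components attached to $T$ itself, to each face $F\in\FT$, to each edge $E\in\ET$, and to each vertex $V\in\VT$. For each matched pair I would show that the corresponding polynomial projections agree, exploiting that each piece of $\DG{T}$ was built by mimicking an exact integration by parts formula and that the interpolator $\IDGrad{T}$ is defined through the ``natural'' traces and $L^2$-projections.

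First I would treat the simplest components. The vertex components are immediate: $\DG{T}$ yields $(\DEV\gdofV{v})_{V}$ evaluated at the interpolator, i.e. $\DEV(\GRAD\bvec{v})(\bvec{x}_V)$, which is exactly the vertex component of $\ISCurl{T}(\DEV\GRAD\bvec{v})$ since $\DEV\GRAD\bvec{v}$ takes values in $\Tless$. The three edge components each reduce to an application of the commutation property \eqref{eq:DerE:commutation.property} of the reconstructed tangential derivative $\DerE{\bullet}$, after identifying the arguments fed into $\DerE{\bullet}$ by $\DG{T}\circ\IDGrad{T}$ with the point values and $L^2(E)$-projections of the relevant scalar/vector fields, and observing that the $\DEV$ and $\SYM$ operators act compatibly with the traces being extracted (e.g. for the tangential-tangential edge component the $-\frac13(\cdots)\ID{2}$ correction precisely reconstitutes the two-dimensional deviator of the trace of $\GRAD\bvec{v}$). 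These are routine but need care with which basis vectors $\{\nEo,\nEd\}$, $\tE$ are used.

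Next I would handle the face components $\DGntF$ and $\DGttF$ and the element component $\DGT$. For each I would plug $\uvec{v}_F = \IDGrad{F}\bvec{v}$ (resp. $\uvec{v}_T = \IDGrad{T}\bvec{v}$) into the defining formulas \eqref{eq:defDGntF}, \eqref{eq:defDGttF}, \eqref{eq:defDGT}, replace each polynomial component by the projection of the corresponding trace of $\bvec{v}$ (or $\DIV\bvec{v}$), and then undo the projections against the test function — which is legitimate precisely because the test spaces $\RT{k+1}(F)$, $\CGtrimPoly{k}(F)$, $\SRtrimPoly{k}(T)$ are exactly the target spaces, so the projections are invisible. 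The resulting expressions are the right-hand sides of the continuous integration by parts formulas \eqref{eq:IPP.DG.Ftn}, \eqref{eq:IPP.DG.Ftt}, \eqref{eq:IPP.DG.T} applied to the true $\bvec{v}$; reading these backwards identifies the output with $\trntF{(\DEV\GRAD\bvec{v})}$ projected on $\RT{k+1}(F)$, with $\trttF{(\DEV\GRAD\bvec{v})}$ projected on $\CGtrimPoly{k}(F)$, and with $\DEV\GRAD\bvec{v}$ projected on $\SRtrimPoly{k}(T)$, respectively — i.e. exactly the face and element components of $\ISCurl{T}(\DEV\GRAD\bvec{v})$. One subtlety to flag in the element case: the formula \eqref{eq:defDGT} uses $\dofnF{v}\,\nF^\top\btens{\sigma}\nF + \hdoftF{v}^\top\btens{\sigma}\nF$, which must be recognised as $\hat{\bvec{v}}_F^\top\btens{\sigma}\nF$ (the full trace injected into $\Real^3$) via the normal/tangential splitting $\bvec{v} = \trInF{\bvec{v}} + \trItF{\bvec{v}}$ used in \eqref{eq:IPP.DG.T}, and here $\dofnF{v} = \lproj{k}{F}(\bvec{v}\cdot\nF)$, $\doftF{v} = \vlproj{k-1}{F}(\trtF{\bvec{v}})$ combine correctly only because $\btens{\sigma}\nF$, when paired against these traces on $F$, lands in the appropriate polynomial spaces so the projections can again be dropped.

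The main obstacle will be the bookkeeping for the edge contributions inside the face operators: on a face $F$, the edge boundary terms in \eqref{eq:defDGttF} involve $\doftE{v} = \lproj{k-1}{E}(\bvec{v}\cdot\tE)$ and $\dofnE{v}\cdot\nFE$, and one must check that dropping these edge projections against $\tE^\top\btens{\sigma}\nFE$ and $\nFE^\top\btens{\sigma}\nFE$ is valid — that is, that those edge-restricted test quantities have degree $\le k-1$ (resp. $\le k$) as required by the image spaces of the projectors — which follows from the trace characterisation of $\CGtrimPoly{k}(F)$ but needs to be invoked carefully. Similarly one must make sure the $\frac13\gdofF{v}$ term, with $\gdofF{v} = \lproj{k-1}{F}(\DIV\bvec{v})$, matches $-\frac13\int_F\DIV\bvec{v}\,\tr\btens{\sigma}$ after removing the projection, using $\tr\btens{\sigma}\in\Poly{k-1}(F)$ for $\btens{\sigma}\in\CGtrimPoly{k}(F)$. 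Once these degree-counting checks are in place, every component match is a direct comparison, and assembling them gives \eqref{eq:DGT.commutativity}.
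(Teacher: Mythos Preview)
Your proposal is correct and follows essentially the same route as the paper: verify the vertex and edge components via \eqref{eq:DerE:commutation.property}, and for the face and element components plug the interpolator into \eqref{eq:defDGntF}, \eqref{eq:defDGttF}, \eqref{eq:defDGT}, remove the projections, and invoke the integration by parts formulas \eqref{eq:IPP.DG.Ftn}, \eqref{eq:IPP.DG.Ftt}, \eqref{eq:IPP.DG.T}. Your explicit flagging of the polynomial-degree checks needed to drop the edge and face projections (e.g., that $\tE^\top\btens{\sigma}\nFE\in\Poly{k-1}(E)$ and $\tr\btens{\sigma}\in\Poly{k-1}(F)$ for $\btens{\sigma}\in\CGtrimPoly{k}(F)$, and \eqref{eq:poly.trace.SR} for the element term) is a welcome addition of rigor that the paper's proof leaves implicit.
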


\begin{proof}
  Let $\bvec{v} \in \bvec{H}^3(T;\Real^3)$. 
  Let us check the relation on the normal-normal component on edges.
  By \eqref{eq:DerE:commutation.property} with $\ell = k$, 
  we have $\DerE{k}(\bvec{v}(\bvec{x}_{V_1}) \cdot \tE, \bvec{v}(\bvec{x}_{V_2}) \cdot \tE, \lproj{k-1}{E}(\bvec{v} \cdot \tE)) = \lproj{k}{E}(\dotp{\bvec{v}} \cdot \tE)$.
  Accounting for the previous relation
  we have, for all $\btens{\sigma}\in\tPoly{k}(E;\Real^{2\times2})$,
  \begin{multline} \label{eq:DGT.com.1}
    \int_E \left[
      \cancel{\tlproj{k}{E}}\trnnE{(\GRAD \bvec{v})} 
      - \frac13\left(\tr\cancel{\tlproj{k}{E}}\trnnE{(\GRAD \bvec{v})} +
      \cancel{\lproj{k}{E}}(\dotp{\bvec{v}} \cdot \tE)
      %% \DerE{k}(\bvec{v}(\bvec{x}_{V_1}) \cdot \tE, \bvec{v}(\bvec{x}_{V_2}) \cdot \tE, \lproj{k-1}{E}(\bvec{v} \cdot \tE))
      \right)\ID{2}
      \right] \tdot \btens{\sigma}
    \\
    = \int_E \trnnE{(\DEV\GRAD \bvec{v})} \tdot \btens{\sigma},
  \end{multline}
  where the cancellation of the projectors is made possible by their definition.
  Using again \eqref{eq:DerE:commutation.property}, this time with $\ell = k+1$, we infer that 
  \begin{align} \label{eq:DGT.com.2}
    \DerE{k+1}(\trnE{\bvec{v}(\bvec{x}_{V_1})}, \trnE{\bvec{v}(\bvec{x}_{V_2})}, \vlproj{k}{E}(\trnE{\bvec{v}}))
    ={}& \vlproj{k+1}{E}\trntE{(\DEV\GRAD\bvec{v})},\\
    \DerE{k+1}(\trnnE{(\GRAD\bvec{v}(\bvec{x}_{V_1}))}, \trnnE{(\GRAD\bvec{v}(\bvec{x}_{V_2}))}, 
    \tlproj{k}{E}\trnnE{(\GRAD \bvec{v})})
    ={}& \tlproj{k+1}{E}\trnnE{(\GRAD ((\DEV\GRAD\bvec{v}) \tE))}. \label{eq:DGT.com.3}
  \end{align}
  Combining \eqref{eq:DGT.com.1}, \eqref{eq:DGT.com.2} and \eqref{eq:DGT.com.3}, we obtain
  $\DG{E}(\IDGrad{E}\bvec{v}) = \ISCurl{E}(\DEV \GRAD \bvec{v})$.
  The commutation for the components of $\DG{T}$ on faces and on the element are proved in a similar fashion:
  first by removing the projections in the definition \eqref{eq:defDGntF} (respectively, \eqref{eq:defDGttF} and \eqref{eq:defDGT}), 
  and then concluding with the integration by parts formula \eqref{eq:IPP.DG.Ftn} (respectively, \eqref{eq:IPP.DG.Ftt} and \eqref{eq:IPP.DG.T}).
\end{proof}

\subsubsection{Discrete symcurl operator}

The discrete symmetric curl operator on edges $\SCE: \uHsymcurl{E}\to\nSPoly{k+1}{E}$ is defined,
for all $\uvec{\tau}_E \in \uHsymcurl{E}$, by:
\begin{equation} \label{eq:defSCE}
  \SCE\utens{\tau}_E \coloneq
  \Ctensor\left(\cdofE{\tau} -
  \DerE{k+1}(\trnnE{(\btens{\tau}_{V_1})},\trnnE{(\btens{\tau}_{V_2})}, \btens{\tau}_E)
  \right),
\end{equation}
where $\Ctensor$ is the fourth-order tensor such that
\begin{equation} \label{eq:defCtensor}
  \Ctensor \begin{pmatrix}
    \eta_{11} & \eta_{12} \\ \eta_{21} & \eta_{22}
  \end{pmatrix}
  = \begin{pmatrix}
    \eta_{12} & \frac{-\eta_{11} + \eta_{22}}{2} \\ \frac{-\eta_{11} + \eta_{22}}{2} & - \eta_{21}
  \end{pmatrix}\qquad\forall\btens{\eta}\in\Real^{2\times 2}.
\end{equation}

There are two components for the symmetric curl operator on faces.
The first one, $\SCnnF:\uHsymcurl{F}\to\Poly{k+1}(F)$, is defined mimicking \eqref{eq:IPP.SC.Fnn}:
For all $\uvec{\tau}_F \in \uHsymcurl{F}$ and all $r \in \Poly{k+1}(F)$,
\begin{equation} \label{eq:defSCnnF}
  \int_F \SCnnF\utens{\tau}_F \, r
  =
  \int_F \rttvec[F]{\tau} \cdot \CURLF r - \sum_{E \in \EF} \wFE \int_E (\doftE{\tau} \cdot \nF)\, r,
\end{equation}
while the second one, $\SCddofF:\uHsymcurl{F}\to\Poly{k+1}(F)$, is defined mimicking \eqref{eq:IPP.SC.Ftt2}:
For all $\uvec{\tau}_F \in \uHsymcurl{F}$ and all $r \in \Poly{k+1}(F)$,
\begin{equation} \label{eq:defSCddofF}
  \begin{aligned}
    \int_F \SCddofF\utens{\tau}_F\, r
    &=
    -\int_F \cgtvec[F]{\tau} \tdot \CURLF \GRADF r + 
    \sumEF (\doftE{\bvec{\tau}} \cdot \nFE)\, \partial_{\nFE} r
    \\
    &\quad
    - \sumEF \left (2 \nFE^\top \btens{\tau}_E \nFE + \nF^\top \btens{\tau}_E \nF \right)\, \partial_{\tE} r
    - \sumEF (\nFE^\top \cdofE{\tau} \, \nFE)\,r
    \\
    &\quad
    + \sum_{E\in\EF}\wFE\jump{E}{(\nFE^\top \, \btens{\tau}_V \, \nFE)\, r}.
  \end{aligned}
\end{equation}

Recalling \eqref{eq:IPP.SC.T}, the discrete symmetric curl operator on an element $T\in\Th$ is defined such that,
for all $\utens{\tau}_T \in \uHsymcurl{T}$ and all
$ \btens{\sigma} \in \HtrimPoly{k}(T)$,
\begin{equation} \label{eq:defSCT}
  \begin{aligned}
    \int_T \SCT\utens{\tau}_T \tdot \btens{\sigma}
    &\coloneq
    \int_T \srtvec{\tau} \tdot \CURL \btens{\sigma} 
    + \sum_{F \in \FT} \wTF \int_F \cgtvec[F]{\tau} \tdot \trttF{(\btens{\sigma} \times \nF)}
    \\
    &\quad
    + \sum_{F \in \FT} \wTF \int_F\rttvec[F]{\tau} \cdot \trntF{(\btens{\sigma}\times\nF)}.
  \end{aligned}
\end{equation}

Finally, the discrete symmetric curl operator $\SC{T}:\uHsymcurl{T}\to\uHdivdiv{T}$ acting between discrete spaces is defined, for all $\utens{\tau}_T \in \uHsymcurl{T}$, as the vector collecting the components defined above:
\begin{equation} \label{eq:defuSC}
  \SC{T}\utens{\tau}_T
  \coloneq\Big(
  \SCT \utens{\tau}_T,
  \big(
  \SCnnF\utens{\tau}_F,
  \SCddofF\utens{\tau}_F
  \big)_{F\in\FT},
  \big(
  \SCE\utens{\tau}_E
  \big)_{E\in\ET}
  \Big).
\end{equation}
%% Lemma \ref{lemma:comm.SC} gives the following commutation property for $\SC{T}$:
It can be checked that the following commutation property with the interpolators holds:
\begin{equation} \label{eq:SCT.commutativity}
  \SC{T} \ISCurl{T} \btens{\tau} = \IDivDiv{T} (\SYM \CURL \btens{\tau})\qquad
  \forall \btens{\tau} \in \btens{H}^3(T;\Tless).
\end{equation}

\subsubsection{Discrete divdiv operator}

The discrete divdiv operator $\DD{T}:\uHdivdiv{T}\to\Poly{k}(T)$ is defined after \eqref{eq:IPP.DD.T}:
For all $\utens{\upsilon}_T\in\uHdivdiv{T}$ and all $v\in\Poly{k}(T)$,
\begin{equation}\label{eq:DT}
  \begin{aligned}
    \int_T\DD{T}\utens{\upsilon}_T~v
    &= \int_T\btens{\upsilon}_{\ctens{H},T}:\HESS v
    - \sum_{F\in\FT}\omega_{TF}\sum_{E\in\EF}\omega_{FE}\int_E(\normal_{FE}^\top\btens{\upsilon}_E\normal_F)~v
    \\
    &\quad
    -\sum_{F\in\FT}\omega_{TF}\int_F\upsilon_F~\partial_{\normal_F}v
    -\sum_{F\in\FT}\omega_{TF}\int_F D_{\btens{\upsilon},F}~v.  
  \end{aligned}
\end{equation}
By construction, it holds
\begin{equation}\label{eq:fortin:2}
  \DD{T}\IDivDiv{T}\btens{\upsilon}
  = \lproj{k}{T}(\DIV\VDIV\btens{\upsilon})\qquad
  \forall\btens{\upsilon}\in\btens{H}^2(T;\Symm),
\end{equation}
as can be checked using \eqref{eq:IDivDiv} in \eqref{eq:DT} written for $\utens{\upsilon}_T = \IDivDiv{T}\btens{\upsilon}$, cancelling the $L^2$-orthogonal projectors using their definitions,
and concluding with \eqref{eq:IPP.DD.T}.

\subsection{Local DDR complex and main results}
For a given mesh element $T\in\Th$, the spaces and operators defined above can be arranged to form the sequence
\begin{equation}\label{eq:ddr.complex}
  \begin{tikzcd}
    \RT{1}(T) \arrow[r,"\IDGrad{T}"]
    & \uHdevgrad{T} \arrow[r,"\DG{T}"]
    & \uHsymcurl{T}  \arrow[r,"\SC{T}"]
    & \uHdivdiv{T} \arrow[r,"\DD{T}"]
    & \Poly{k}(T) \arrow[r,"0"]
    & \lbrace 0 \rbrace.
  \end{tikzcd}
\end{equation}

\begin{theorem}[Local complex property and exactness]\label{th:local.complex}
  The sequence \eqref{eq:ddr.complex} forms a complex which is exact if the topology of $T$ is trivial and if $k \geq 1$.
\end{theorem}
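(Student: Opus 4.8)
The plan is to proceed in two stages: first establish the complex property (composition of consecutive maps is zero), then establish exactness when $T$ has trivial topology and $k\ge 1$. For the complex property, I would verify $\DG{T}\circ\IDGrad{T}=0$ (on $\RT{1}(T)$), $\SC{T}\circ\DG{T}=0$, $\DD{T}\circ\SC{T}=0$, and that $\DD{T}$ is surjective onto $\Poly{k}(T)$ so the final $0$-map has the right image. The cleanest way to do the first three is to exploit the commutation properties \eqref{eq:DGT.commutativity}, \eqref{eq:SCT.commutativity}, \eqref{eq:fortin:2}: since the interpolators commute with the differentials and the continuous complex \eqref{eq:complex} is itself a complex, the compositions vanish on interpolants of smooth fields; but because these compositions are \emph{local} polynomial maps between finite-dimensional spaces, I would instead argue directly at the discrete level. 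Concretely, to show $\SC{T}\DG{T}=0$ one uses the definition \eqref{eq:defSCT} of $\SCT$ tested against $\btens{\sigma}\in\HtrimPoly{k}(T)$, substitutes the components of $\DG{T}\uvec{v}_T$ from \eqref{eq:defuDG}, and recognizes — via the integration-by-parts formulas \eqref{eq:IPP.SC.T}, \eqref{eq:IPP.SC.Fnn}, \eqref{eq:IPP.SC.Ftt2} together with $\CURL\DEV\GRAD=\bvec 0$ at the continuous level and $\CURL\HESS=\bvec 0$ (Lemma \ref{lemma:poly.SRfromH}) — that everything telescopes to zero; the edge and face components are handled analogously using that $\DerE{\ell}$ is the 1D HHO gradient and $\Ctensor$ is designed to annihilate the appropriate Hessian traces. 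The same scheme, with \eqref{eq:IPP.DD.T} and $\DIV\VDIV\SYM\CURL=0$, gives $\DD{T}\SC{T}=0$; and $\DG{T}\IDGrad{T}$ restricted to $\RT{1}(T)$ vanishes because a lowest-order Raviart--Thomas field has vanishing $\DEV\GRAD$ and all the edge/vertex reconstructions reproduce that.

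For exactness, the standard strategy is a dimension count plus injectivity/surjectivity at the ends. The plan is: (i) show $\IDGrad{T}$ is injective on $\RT{1}(T)$ (immediate, since the element component $\vlproj{k-1}{T}$ already determines a degree-$\le 1$ field for $k\ge 1$, or via the vertex values); (ii) show $\DD{T}$ is surjective onto $\Poly{k}(T)$ (pick $\utens{\upsilon}_T$ with only the $\htvec{\upsilon}$ component nonzero and use that $\HESS:\Poly{k+2}(T)\to\cdots$ together with the structure of $\HtrimPoly{k}(T)$ hits all of $\Poly{k}(T)$ by testing, or more directly invert \eqref{eq:DT} against a suitable Hessian); (iii) establish the exactness equalities $\Ker\DG{T}=\Image\IDGrad{T}$, $\Ker\SC{T}=\Image\DG{T}$, $\Ker\DD{T}=\Image\SC{T}$. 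Since local exactness of DDR-type complexes on topologically trivial elements is typically bootstrapped from the exactness of the \emph{polynomial} divdiv complex and exactness of lower-dimensional (face and edge) sub-complexes, I would: first prove exactness of the relevant polynomial complexes on $T$ and on each $F\in\FT$ (using the direct decompositions \eqref{eq:Poly=CGoly+cCGoly}, \eqref{eq:vPoly=Holy+cHoly}, Lemma \ref{lemma:poly.SRfromH} and the trimmed-space definitions \eqref{eq:RT}--\eqref{eq:HtrimPoly}), then lift this to the discrete spaces by a diagram-chase / "local exactness from component exactness" argument in the spirit of \cite{Di-Pietro.Droniou.ea:23}. The Euler-characteristic bookkeeping — i.e. checking $\sum (-1)^i \dim(\text{space}_i) = 0$ — serves as a consistency check and, combined with the injectivity at the left and surjectivity at the right and two of the three middle kernel/image inclusions, forces the remaining one.

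The main obstacle, I expect, is the exactness at $\uHsymcurl{T}$, i.e. proving $\Ker\SC{T}\subseteq\Image\DG{T}$. This is where the nonstandard algebraic ingredients concentrate: one must reconstruct a preimage in $\uHdevgrad{T}$ from a symcurl-free element of $\uHsymcurl{T}$, which requires simultaneously inverting the devgrad reconstructions on edges (involving $\DerE{\ell}$ and the trace tensor $\Ctensor$), on faces (the operators $\DGntF$, $\DGttF$ through \eqref{eq:defDGntF}--\eqref{eq:defDGttF}, and the decomposition \eqref{eq:Poly=CGoly+cCGoly}), and on the element ($\DGT$ through \eqref{eq:defDGT}), ensuring all these local preimages are mutually compatible — the compatibility being precisely what the vanishing of $\SC{T}$ encodes. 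Concretely the difficulty is that $\Tless$-valued and $\Symm$-valued constraints interact through $\SYM$, $\DEV$ and the row-wise cross product, so that the naive preimage must be corrected by a gradient-type term; constructing that correction and checking it lands in the right trimmed space is the technical heart, and I would devote the bulk of Sections \ref{sec:complex.property} and \ref{sec:exactness} to it, isolating the purely polynomial parts into the lemmas of Appendix \ref{sec:results.polynomial.spaces}. Exactness at $\uHdivdiv{T}$ is comparatively easier since the divdiv operator has better analytic structure (this is exactly why its analysis is singled out in Section \ref{sec:biharmonic}), and exactness at $\uHdevgrad{T}$ reduces to showing a discrete devgrad-free field comes from $\RT{1}(T)$, which follows from connectedness of $T$ (trivial topology) by a standard "locally constant $\Rightarrow$ globally constant" argument propagated through vertices, edges, faces.
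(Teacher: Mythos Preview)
Your overall plan is correct and closely matches the paper: complex property via direct discrete computation (Sections~\ref{sec:complex.property}), exactness at $\uHdevgrad{T}$ and $\uHsymcurl{T}$ by explicit reconstruction of preimages (Sections~\ref{sec:exactness}), and you correctly single out $\Ker\SC{T}\subset\Image\DG{T}$ as the technical heart. Two points of divergence are worth noting. First, the paper does \emph{not} bootstrap exactness from lower-dimensional sub-complexes via a diagram chase in the style of \cite{Di-Pietro.Droniou.ea:23}; instead it constructs the preimage in $\uHdevgrad{T}$ by hand, propagating from vertices (where the vanishing of $\SCE$ forces a scalar ambiguity $\lambda_E$, integrated to vertex scalars $\lambda_V$ via trivial $H^1$-topology of $\partial T$) to edges, faces, and the element, each layer using the previous. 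Your diagram-chase proposal is plausible but would require first defining and proving exactness of face-level discrete complexes, which the paper never does. Second, you mischaracterize the mechanism for exactness at $\uHdivdiv{T}$: it is \emph{not} the analytic structure of $\DD{T}$ (Section~\ref{sec:biharmonic} concerns error analysis, not exactness), but precisely the Euler-characteristic dimension count you relegate to a ``consistency check.'' The paper proves \eqref{eq:exact.RTDG} and \eqref{eq:exact.DGSC} directly, then computes $\dim\Ker\DD{T}-\dim\Image\SC{T}=4(|\VT|-|\ET|+|\FT|-2)=0$ for trivial topology. This is also where the hypothesis $k\ge 1$ enters: for $k=0$ the dimension formula for $\HtrimPoly{k}(T)$ degenerates (since $\dim\Holy{-1}=\dim\Holy{-2}$) and the count gives a defect of $3$, so exactness fails at the tail (see Remark~\ref{rem:SCDD.k=0}); your plan does not locate this obstruction.
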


\begin{remark}[Role of the condition $k\ge 1$]
  The condition $k \geq 1$ is only required for the exactness of the 
    tail of the complex (see \eqref{eq:exact.SCDD} and Remark \ref{rem:SCDD.k=0} below).
  The head of the complex is exact also for $k = 0$ (cf., in particular, \eqref{eq:exact.RTDG} and \eqref{eq:exact.DGSC} below).
\end{remark}

\begin{proof}
  The fact that the sequence \eqref{eq:ddr.complex} forms a complex is a consequence of the following relations:
  \begin{align} \label{eq:IGrad.RT.subset.DGT}
    \IDGrad{T}\RT{1}(T) &\subset \Ker \DG{T},
    \\ \label{eq:LC.DGSC}
    \Image \DG{T} &\subset \Ker \SC{T},
    \\ \label{eq:LC.SCDD}
    \Image \SC{T} &\subset \Ker \DD{T},
    \\ \label{eq:Image.DDT.subset.PolykT}
    \Image \DD{T} &= \Poly{k}(T).
  \end{align}
  The inclusion \eqref{eq:IGrad.RT.subset.DGT} is a straightforward consequence of the commutation  property \eqref{eq:DGT.commutativity} along with the fact that $\DEV\GRAD\RT{1}(T) = \bvec{0}$.
  The relation \eqref{eq:Image.DDT.subset.PolykT} classically follows from the surjectivity of $\DIV\VDIV:\Hdivdiv{T}{\Symm}\to L^2(T)$ along with \eqref{eq:fortin:2} (a more detailed argument is provided in Lemma \ref{lem:inf-sup:DDh} below for its global counterpart).
  Finally, properties \eqref{eq:LC.DGSC} and \eqref{eq:LC.SCDD} are proved in Section \ref{sec:complex.property}.

  The exactness of the complex when $T$ has a trivial topology translates into the following properties:
  \begin{subequations}\label{eq:exactness}
    \begin{alignat}{2} \label{eq:exact.RTDG}
      \IDGrad{T} \RT{1}(T) &= \Ker \DG{T}
      \\ \label{eq:exact.DGSC}
      \Image \DG{T} &= \Ker \SC{T}
      \\ \label{eq:exact.SCDD}
      \Image \SC{T} &= \Ker \DD{T}
      &\qquad& \text{if $k \geq 1$.}
    \end{alignat}
  \end{subequations}
  These properties are proved in Section~\ref{sec:exactness}.
\end{proof}

%------------------------------------------------------------------------------%

\section{A mixed method for biharmonic problems}\label{sec:biharmonic}

In this section we consider the application of the spaces at the tail of the above complex to the mixed discretization of the biharmonic problem \eqref{eq:weak}.
  Throughout this section, $k\ge 0$ is an integer corresponding to the polynomial degree of the scheme.

\subsection{Local component product}

We furnish $\uHdivdiv{T}$ with the component inner product such that, for all $(\utens{\upsilon}_T,\utens{\tau}_T)\in\uHdivdiv{T}\times\uHdivdiv{T}$,
\begin{equation}\label{eq:component.L2.prod}
  \lbrack\utens{\upsilon}_T,\utens{\tau}_T\rbrack_{\DIV\VDIV,T}
  \coloneq
  \int_T \btens{\upsilon}_{\ctens{H},T}:\btens{\tau}_{\ctens{H},T}
  + h_T\sum_{F\in\FT}\int_F\left(
  \upsilon_F~\tau_F + h_T^2 D_{\btens{\upsilon},F}~D_{\btens{\tau},F}
  \right)
  + h_T^2\sum_{E\in\ET}\int_E\btens{\upsilon}_E:\btens{\tau}_E.
\end{equation}
and we introduce the corresponding component norm such that
\begin{equation}\label{eq:tnorm}
  \tnorm{\DIV\VDIV,T}{\utens{\tau}_T}
  \coloneq
  \lbrack\utens{\tau}_T,\utens{\tau}_T\rbrack^{\frac12}
  \qquad\forall\utens{\tau}_T\in\uHdivdiv{T}.
\end{equation}

Using the boundedness of $L^2$-orthogonal projectors along with continuous trace inequalities on the faces and edges of $T$, it can be proved, similarly to \cite[Lemma 6]{Di-Pietro.Droniou:23}, that
\begin{equation}\label{eq:fortin:1}
  \tnorm{\DIV\VDIV,T}{\IDivDiv{T}\btens{\tau}}
  \lesssim
  \norm{\btens{L}^2(T;\Matr)}{\btens{\tau}}
  + h_T\seminorm{\btens{H}^1(T;\Matr)}{\btens{\tau}}
  + h_T^2\seminorm{\btens{H}^2(T;\Matr)}{\btens{\tau}}\qquad
  \forall\btens{\tau}\in\btens{H}^2(T;\Symm).
\end{equation}
Moreover, for all $\utens{\tau}_T\in\uHdivdiv{T}$, taking $v = \DD{T}\utens{\tau}_T$ in \eqref{eq:DT} and using Cauchy--Schwarz, discrete inverse and trace inequalities along with the definition \eqref{eq:tnorm} of $\tnorm{\DIV\VDIV,T}{{\cdot}}$, we get the following boundedness property:
\begin{equation}\label{eq:DD:boundedness}
  \norm{L^2(T)}{\DD{T}\utens{\tau}_T}
  \lesssim h_T^{-2}\tnorm{\DIV\VDIV,T}{\utens{\tau}_T}
  \qquad\forall\utens{\tau}_T\in\uHdivdiv{T}.
\end{equation}

\subsection{Discrete symmetric matrix potential}

In order to reconstruct a symmetric matrix potential, we first need to reconstruct face traces of degree $k$.
To this purpose, for each $F\in\FT$, we can apply the principles of the HHO potential reconstruction (see, e.g., \cite[Section 5.1.3]{Di-Pietro.Droniou:20}) to devise, from the vector of polynomials $(\upsilon_F, (\normal_F^\top\btens{\upsilon}_E\normal_F)_{E\in\ET})\in\Poly{k-1}(F)\times\left(\bigtimes_{E\in\EF}\Poly{k}(E)\right)$, a function in $\Poly{k}(F)$ that can be interpreted as the normal-normal trace of a symmetric matrix-valued field on $T$ (in passing, with these values one could actually compute a normal-normal trace in $\Poly{k+1}(F)$, but this will not be needed in what follows).
The corresponding reconstruction operator $\gammaF:\uHdivdiv{F}\to\Poly{k}(F)$ is, by construction, polynomially consistent: 
For all $\btens{\upsilon}\in\tPoly{k}(T;\Symm)$, $\gammaF\IDivDiv{F}\btens{\upsilon}_{|F} = \normal_F^\top\btens{\upsilon}_{|F}\normal_F$.
Moreover, the following boundedness property holds:
\begin{equation}\label{eq:gammaF:boundedness}
  \norm{L^2(F)}{\gammaF\utens{\upsilon}_F}
  \lesssim h_F^{-\frac12}\tnorm{\DIV\VDIV,T}{\utens{\upsilon}_T}
  \qquad\forall\utens{\upsilon}_T\in\uHdivdiv{T}.
\end{equation}

The symmetric matrix potential $\TP:\uHdivdiv{T}\to\tPoly{k}(T;\Symm)$ is then defined, mimicking \eqref{eq:IPP.DD.T},  such that, for all $\utens{\upsilon}_T\in\uHdivdiv{T}$:
For all $(v,\btens{\tau})\in\Poly{k+2}(T)\times\cHoly{k}(T)$,
\begin{multline}\label{eq:TP}
  \int_T\TP\utens{\upsilon}_T:(\HESS v + \btens{\tau})
  = \int_T\DD{T}\utens{\upsilon}_T~v
  + \sum_{F\in\FT}\omega_{TF}\sum_{E\in\EF}\omega_{FE}\int_E(\normal_{FE}^\top\btens{\upsilon}_E\normal_F)~v
  \\
  + \sum_{F\in\FT}\omega_{TF}\int_F\gammaF\utens{\upsilon}_F~\partial_{\normal_F}v
  - \sum_{F\in\FT}\omega_{TF}\int_F D_{\btens{\upsilon},F}~v
  + \int_T\btens{\upsilon}_{\ctens{H},T}:\btens{\tau}.
\end{multline}
By construction, the following polynomial consistency property holds:
\begin{equation}\label{eq:TP:polynomial.consistency}
  \TP\IDivDiv{T}\btens{\upsilon} = \btens{\upsilon}\qquad
  \forall\btens{\upsilon}\in\tPoly{k}(T;\Symm).
\end{equation}
Additionally, taking in \eqref{eq:TP} $(v,\btens{\tau})$ such that $\HESS v + \btens{\tau} = \TP\utens{\upsilon}_T$ 
(this is possible by virtue of the direct decomposition \eqref{eq:vPoly=Holy+cHoly}), 
using Cauchy--Schwarz, discrete trace, and inverse inequalities, 
and invoking the boundedness \eqref{eq:DD:boundedness} of $\DD{T}$ and \eqref{eq:gammaF:boundedness} of $\gammaF$, it is inferred:
\begin{equation}\label{eq:TP:boundedness}
  \norm{\btens{L}^2(T;\Matr)}{\TP\utens{\upsilon}_T}
  \lesssim\tnorm{\DIV\VDIV,T}{\utens{\upsilon}_T}
  \qquad\forall\utens{\upsilon}_T\in\uHdivdiv{T}.
\end{equation}

\begin{remark}[Polynomially consistent symmetric curl]
  %% Using the commutation property \eqref{eq:SCT.commutativity}
  For all $\btens{\tau}\in\tPoly{k+1}(T;\Tless)$, noticing that $\sym\CURL\btens{\tau}\in\tPoly{k}(T;\Symm)$, it holds
  \[
  \TP(\SC{T}\ISCurl{T}\btens{\tau})
  \overset{\eqref{eq:SCT.commutativity}}{=} \TP\IDivDiv{T}(\sym\CURL\btens{\tau})
  \overset{\eqref{eq:TP:polynomial.consistency}}{=} \sym\CURL\btens{\tau},
  \]
  showing that $\TP\circ\SC{T}$ provides a polynomially consistent approximation of the symmetric curl inside $T$.
  A similar construction can be repeated to obtain a consistent approximation of dev grad.
  Since this construction is not needed in the present discussion, we leave the details for a future work.
\end{remark}

\subsection{Global Hessian space, reconstructions, and discrete $L^2$-product}

A global space $\uHdivdiv{h}$ on the mesh $\Mh$ is obtained patching together the local spaces by enforcing the single-valuedness of the unknowns attached to edges and faces shared by multiple elements.
The global divdiv operator $\DD{h}:\uHdivdiv{h}\to\Poly{k}(\Th)$
and symmetric matrix potential operator $\TP[h]:\uHdivdiv{h}\to\tPoly{k}(\Th;\Symm)$ (with $\tPoly{k}(\Th;\Symm)$ symmetric matrix-valued version of the broken polynomial space \eqref{eq:broken.Pl}) are such that, for all $\utens{\upsilon}_h\in\uHdivdiv{h}$,
\[
\text{
  $(\DD{h}\utens{\upsilon}_h)_{|T}\coloneq\DD{T}\utens{\upsilon}_T$\quad
  and\quad $(\TP[h]\utens{\upsilon}_h)_{|T}\coloneq\TP\utens{\upsilon}_T$\quad
  for all $T\in\Th$.
}
\]
We define the following $L^2$-like product:
For all $(\utens{\upsilon}_h,\utens{\tau}_h)\in\uHdivdiv{h}$,
\[
(\utens{\upsilon}_h,\utens{\tau}_h)_{\DIV\VDIV,h}
\coloneq\sum_{T\in\Th}(\utens{\upsilon}_T,\utens{\tau}_T)_{\DIV\VDIV,T},
\]
where
\begin{equation}\label{eq:L2.prod:T}
  (\utens{\upsilon}_T,\utens{\tau}_T)_{\DIV\VDIV,T}
  \coloneq\int_T\TP\utens{\upsilon}_T:\TP\utens{\tau}_T
  + s_T(\utens{\upsilon}_T,\utens{\tau}_T).
\end{equation}
Above, $s_T$ is a symmetric positive semi-definite stabilisation bilinear form that ensures the positivity of $(\cdot,\cdot)_{\DIV\VDIV,T}$ while preserving polynomial consistency.
A possible expression for $s_T$ is the following:
\begin{equation}\label{eq:sT}
  s_T(\utens{\upsilon}_T,\utens{\tau}_T)
  = \lbrack\IDivDiv{T}\TP\utens{\upsilon}_T - \utens{\upsilon}_T,
  \IDivDiv{T}\TP\utens{\tau}_T - \utens{\tau}_T
  \rbrack_{\DIV\VDIV,T}.
\end{equation}
The following polynomial consistency property easily follows from \eqref{eq:TP:polynomial.consistency} and \eqref{eq:sT}:
\begin{equation}\label{eq:sT:polynomial.consistency}
  s_T(\IDivDiv{T}\btens{\upsilon}, \utens{\tau}_T) = 0
  \qquad\forall(\btens{\upsilon},\utens{\tau}_T)\in\tPoly{k}(T;\Symm)\times\uHdivdiv{T}.
\end{equation}

\begin{remark}[Difference between the component and discrete $L^2$-products]
  The main difference between the local component $L^2$-product defined by \eqref{eq:component.L2.prod} and the local discrete $L^2$-product defined by \eqref{eq:L2.prod:T} is that the latter is consistent whenever its arguments are interpolate of polynomial functions, i.e.,
  \[
  (\IDivDiv{T}\btens{\upsilon}, \IDivDiv{T}\btens{\tau})_{\DIV\VDIV,T}
  = \int_T\btens{\upsilon}:\btens{\tau}
  \qquad\forall(\btens{\upsilon},\btens{\tau})\in\tPoly{k}(T;\Symm).
  \]
\end{remark}

We close this section defining the norm induced by the $L^2$-product:
For $\bullet\in\Th\cup\{h\}$,
\begin{equation}\label{eq:discrete.L2.norm}
  \norm{\DIV\VDIV,\bullet}{\utens{\tau}_\bullet}
  \coloneq (\utens{\tau}_\bullet,\utens{\tau}_\bullet)_{\DIV\VDIV,\bullet}^{\nicefrac12}
  \qquad\forall\utens{\tau}_\bullet\in\uHdivdiv{\bullet}.
\end{equation}
The norm dual to $\norm{\DIV\VDIV,h}{{\cdot}}$ is denoted by $\norm{\DIV\VDIV,h,*}{{\cdot}}$.

\subsection{Discrete problem and main results}

Set, for the sake of brevity, $\mathcal{Z}_h^k\coloneq\uHdivdiv{h}\times\Poly{k}(\Th)$.
The discrete problem reads:
Find $(\utens{\sigma}_h,u_h)\in\mathcal{Z}_h^k$ such that
\begin{equation}\label{eq:discrete}
  \begin{alignedat}{4}
    (\utens{\sigma}_h,\utens{\tau}_h)_{\DIV\VDIV,h}
    + \int_\Omega\DD{h}\utens{\tau}_h~u_h
    &= 0
    &\qquad&\forall\utens{\tau}_h\in\uHdivdiv{h},
    \\
    -\int_\Omega\DD{h}\utens{\sigma}_h~v_h
    &= \int_\Omega f v_h
    &\qquad&\forall v_h\in\Poly{k}(\Th),
  \end{alignedat}
\end{equation}
or, equivalently:
Find $(\utens{\sigma}_h,u_h)\in\mathcal{Z}_h^k$ such that
\begin{equation}\label{eq:discrete:variational}
  \mathcal{A}_h((\utens{\sigma}_h,u_h),(\utens{\tau}_h,v_h))
  = \int_\Omega fv_h\qquad
  \forall(\utens{\tau}_h,v_h)\in\mathcal{Z}_h^k,
\end{equation}
with bilinear form $\mathcal{A}_h:\mathcal{Z}_h^k\times\mathcal{Z}_h^k\to\Real$ such that
\[
\mathcal{A}_h((\utens{\upsilon}_h,w_h),(\utens{\tau}_h,v_h))
\coloneq
(\utens{\upsilon}_h,\utens{\tau}_h)_{\DIV\VDIV,h}
+ \int_\Omega\DD{h}\utens{\tau}_h~w_h
-\int_\Omega\DD{h}\utens{\upsilon}_h~v_h.
\]
We state hereafter the main analysis results for the numerical scheme defined above.
To this purpose, we equip $\mathcal{Z}_h^k$ with the following norm:
\begin{equation}\label{eq:norm}
  \norm{\mathcal{Z},h}{(\utens{\tau}_h,v_h)}
  \coloneq\norm{\DIV\VDIV,h}{\utens{\tau}_h}
  + \norm{L^2(\Omega)}{v_h}
  \qquad\forall (\utens{\tau}_h,v_h)\in\mathcal{Z}_h^k.
\end{equation}

\begin{theorem}[Well-posedness]\label{thm:well-posedness}
  It holds
  \begin{equation}\label{eq:inf-sup:Ah}
    1\lesssim\inf_{(\utens{\upsilon}_h,w_h)\in\mathcal{Z}_h^k\setminus\{0\}}
    \sup_{(\utens{\tau}_h,v_h)\in\mathcal{Z}_h^k\setminus\{0\}}
    \frac{\mathcal{A}_h((\utens{\upsilon}_h,w_h),(\utens{\tau}_h,v_h))}{\norm{\mathcal{Z},h}{(\utens{\upsilon}_h,w_h)}\norm{\mathcal{Z},h}{(\utens{\tau}_h,v_h)}}.
  \end{equation}
  Moreover, problem \eqref{eq:discrete} (or, equivalently, \eqref{eq:discrete:variational}) admits a unique solution which satisfies
  \[
  \norm{\mathcal{Z},h}{(\utens{\sigma}_h,u_h)}
  \lesssim\norm{L^2(\Omega)}{f}.
  \]
\end{theorem}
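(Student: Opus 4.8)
The strategy is the standard one for mixed problems of saddle-point type: prove the inf-sup condition \eqref{eq:inf-sup:Ah} directly, and then invoke the Banach--Nečas--Babuška theorem, together with the fact that $\mathcal{A}_h$ is bounded on $\mathcal{Z}_h^k$ for the norm $\norm{\mathcal{Z},h}{{\cdot}}$ (which follows from the Cauchy--Schwarz inequality for $(\cdot,\cdot)_{\DIV\VDIV,h}$ and the boundedness \eqref{eq:DD:boundedness} of $\DD{T}$, suitably globalized), to conclude existence, uniqueness, and the a priori bound $\norm{\mathcal{Z},h}{(\utens{\sigma}_h,u_h)}\lesssim\norm{L^2(\Omega)}{f}$. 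Since $\mathcal{Z}_h^k$ is finite-dimensional, the inf-sup condition on $\mathcal{A}_h$ for a single pair of arguments already gives invertibility; the dual norm control then yields the stability estimate.

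The heart of the matter is therefore establishing \eqref{eq:inf-sup:Ah}. I would argue along the classical two-ingredient route for perturbed saddle-point problems (Brezzi's conditions): (i) an inf-sup condition for the off-diagonal coupling $b_h(\utens{\tau}_h,v_h)\coloneq\int_\Omega\DD{h}\utens{\tau}_h~v_h$, namely
\begin{equation*}
  \norm{L^2(\Omega)}{v_h}\lesssim\sup_{\utens{\tau}_h\in\uHdivdiv{h}\setminus\{0\}}\frac{\int_\Omega\DD{h}\utens{\tau}_h~v_h}{\norm{\DIV\VDIV,h}{\utens{\tau}_h}}\qquad\forall v_h\in\Poly{k}(\Th),
\end{equation*}
and (ii) coercivity of $(\cdot,\cdot)_{\DIV\VDIV,h}$ on the kernel of $b_h$, which here is in fact trivial since $(\cdot,\cdot)_{\DIV\VDIV,h}$ is an inner product inducing precisely the norm $\norm{\DIV\VDIV,h}{{\cdot}}$, so it is coercive on all of $\uHdivdiv{h}$. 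Given (i) and (ii), a standard algebraic manipulation (choose $\utens{\tau}_h$ as a combination of $\utens{\upsilon}_h$ itself and a lifting of $w_h$ provided by (i), and $v_h=-w_h$) produces \eqref{eq:inf-sup:Ah}; this part is routine once (i) is in hand.

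The main obstacle is thus ingredient (i), the global inf-sup stability of $\DD{h}$. The excerpt explicitly flags this by referring forward to ``Lemma \ref{lem:inf-sup:DDh}'', so I would state and use that lemma. Its proof should go as follows: given $v_h\in\Poly{k}(\Th)$, use surjectivity of $\DIV\VDIV:\HOdivdiv{\Omega}{\Symm}\to L^2(\Omega)$ (a property of the continuous complex \eqref{eq:complex}) to find $\btens{\tau}\in\HOdivdiv{\Omega}{\Symm}\cap\btens{H}^2$ with $\DIV\VDIV\btens{\tau}=v_h$ and $\norm{\btens{H}^2(\Omega;\Matr)}{\btens{\tau}}\lesssim\norm{L^2(\Omega)}{v_h}$ — here one needs a regularity/stable-lifting statement, which on a general polyhedral domain requires some care and is typically obtained by a localization/Bramble--Hilbert argument or cited from the literature on the divdiv complex. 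Then set $\utens{\tau}_h\coloneq\IDivDiv{h}\btens{\tau}$; the commutation property \eqref{eq:fortin:2} (globalized) gives $\DD{h}\utens{\tau}_h=\lproj{k}{\Th}(\DIV\VDIV\btens{\tau})=v_h$ since $v_h$ is already a degree-$k$ broken polynomial, whence $\int_\Omega\DD{h}\utens{\tau}_h~v_h=\norm{L^2(\Omega)}{v_h}^2$. Finally, the boundedness $\norm{\DIV\VDIV,h}{\IDivDiv{h}\btens{\tau}}\lesssim\norm{\btens{H}^2(\Omega;\Matr)}{\btens{\tau}}$ follows from \eqref{eq:TP:boundedness} and \eqref{eq:sT} together with the Fortin-type bound \eqref{eq:fortin:1}; combining these three facts yields (i). I expect the delicate point to be the stable regular lifting for $\DIV\VDIV$ on a polyhedral domain — everything downstream of it is bookkeeping with the already-established commutation and boundedness properties.
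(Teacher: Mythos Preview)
Your proposal is correct and follows essentially the same approach as the paper: coercivity of $(\cdot,\cdot)_{\DIV\VDIV,h}$ (trivial, since it induces $\norm{\DIV\VDIV,h}{{\cdot}}$) combined with the inf-sup on $\DD{h}$ (Lemma~\ref{lem:inf-sup:DDh}), the latter proved via the Fortin argument using \eqref{eq:fortin:1}--\eqref{eq:fortin:2}. The only substantive differences are packaging: the paper cites the abstract $B$-compatible operator framework of Boffi--Brezzi--Fortin rather than constructing the lifting by hand, and it resolves your ``delicate point'' by citing the surjectivity of $\DIV\VDIV:\btens{H}^2(\Omega;\Symm)\to L^2(\Omega)$ directly from Pauly--Zulehner (no zero boundary condition is needed on the lifting, so drop the subscript $0$).
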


\begin{proof}
  See Section \ref{sec:discrete.problem:well-posedness}.
\end{proof}

\begin{theorem}[Error estimate]\label{thm:error.estimate}
  Let $(\btens{\sigma},u)\in\Hdivdiv{\Omega}{\Symm}\times L^2(\Omega)$ denote the unique solution to the continuous problem \eqref{eq:weak}, and assume the additional regularity $\btens{\sigma}\in\btens{H}^2(\Omega;\Symm)\cap\btens{H}^{k+1}(\Th;\Symm)$ and $u\in H^{k+3}(\Th)$.
  Then, denoting by $(\utens{\sigma}_h,u_h)\in\uHdivdiv{h}\times\Poly{k}(\Th)$ the unique solution to the discrete problem \eqref{eq:discrete} (or, equivalently, \eqref{eq:discrete:variational}), it holds
  \begin{equation}\label{eq:error.estimate}
    \norm{\mathcal{Z},h}{(\utens{\sigma}_h - \IDivDiv{h}\btens{\sigma}, u_h - \lproj{k}{h} u)}
    \lesssim h^{k+1}\left(
    \seminorm{\btens{H}^{k+1}(\Th;\Matr)}{\btens{\sigma}}
    + \seminorm{H^{k+3}(\Th)}{u}
    \right).
  \end{equation}
\end{theorem}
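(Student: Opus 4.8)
The plan is to follow the classical strategy for error estimates in mixed finite element methods (the so-called \emph{third Strang lemma} or consistency-stability framework, cf. \cite{Di-Pietro.Droniou:20}). First I would write the error equation: setting $\widehat{\utens{\sigma}}_h\coloneq\IDivDiv{h}\btens{\sigma}$ and $\widehat{u}_h\coloneq\lproj{k}{h}u$, and subtracting the discrete problem \eqref{eq:discrete:variational} from the exact relations tested against discrete functions, one obtains
\[
\mathcal{A}_h\big((\utens{\sigma}_h-\widehat{\utens{\sigma}}_h,\,u_h-\widehat{u}_h),(\utens{\tau}_h,v_h)\big)
= \mathcal{E}_h\big((\btens{\sigma},u);(\utens{\tau}_h,v_h)\big)
\qquad\forall(\utens{\tau}_h,v_h)\in\mathcal{Z}_h^k,
\]
where $\mathcal{E}_h$ is the consistency error measuring the failure of $(\widehat{\utens{\sigma}}_h,\widehat{u}_h)$ to solve the discrete problem. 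By the inf-sup stability \eqref{eq:inf-sup:Ah} of $\mathcal{A}_h$ (Theorem \ref{thm:well-posedness}), it then suffices to bound $\mathcal{E}_h$ in the dual norm:
\[
\norm{\mathcal{Z},h}{(\utens{\sigma}_h-\widehat{\utens{\sigma}}_h,\,u_h-\widehat{u}_h)}
\lesssim \sup_{(\utens{\tau}_h,v_h)\in\mathcal{Z}_h^k\setminus\{0\}}
\frac{\mathcal{E}_h\big((\btens{\sigma},u);(\utens{\tau}_h,v_h)\big)}{\norm{\mathcal{Z},h}{(\utens{\tau}_h,v_h)}}.
\]

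Next I would expand $\mathcal{E}_h$ into its constituent pieces. Using $f=\DIV\VDIV\btens{\sigma}$ and the commutation/consistency property \eqref{eq:fortin:2}, the second equation of \eqref{eq:discrete} is satisfied \emph{exactly} by $\widehat{\utens{\sigma}}_h$ up to the replacement of $f$ by $\lproj{k}{h}f$, which costs nothing since $v_h\in\Poly{k}(\Th)$; hence the only genuine consistency error comes from the first equation, namely from the difference
\[
(\IDivDiv{h}\btens{\sigma},\utens{\tau}_h)_{\DIV\VDIV,h}
+\int_\Omega\DD{h}\utens{\tau}_h\;\lproj{k}{h}u
\;-\;\Big(\int_\Omega\btens{\sigma}:\TP[h]\utens{\tau}_h + \int_\Omega\DD{h}\utens{\tau}_h\;u\Big),
\]
the bracketed term vanishing by the weak formulation \eqref{eq:weak} tested against $\TP[h]\utens{\tau}_h$ (after noting $\int_\Omega\DD{h}\utens{\tau}_h\,(u-\lproj{k}{h}u)=0$ since $\DD{h}\utens{\tau}_h$ is piecewise in $\Poly{k}$). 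Thus $\mathcal{E}_h$ reduces to $\sum_{T\in\Th}\big[(\IDivDiv{T}\btens{\sigma},\utens{\tau}_T)_{\DIV\VDIV,T}-\int_T\btens{\sigma}:\TP\utens{\tau}_T\big]$. Inserting $\btens{\sigma}^T\coloneq\btens{\pi}_{\ctens{P},T}^{k}\btens{\sigma}$ (the local $L^2$-projection onto $\tPoly{k}(T;\Symm)$) and using the polynomial consistency of the discrete $L^2$-product together with $\TP\IDivDiv{T}\btens{\sigma}^T=\btens{\sigma}^T$ from \eqref{eq:TP:polynomial.consistency}, one gets
\[
\mathcal{E}_h=\sum_{T\in\Th}\Big[(\IDivDiv{T}(\btens{\sigma}-\btens{\sigma}^T),\utens{\tau}_T)_{\DIV\VDIV,T}
-\int_T(\btens{\sigma}-\btens{\sigma}^T):\TP\utens{\tau}_T\Big],
\]
and each summand is controlled, via Cauchy--Schwarz and the boundedness \eqref{eq:fortin:1} of the interpolator in the component norm (together with $\tnorm{\DIV\VDIV,T}{\cdot}\lesssim\norm{\DIV\VDIV,T}{\cdot}$, which one checks from the definitions), the boundedness \eqref{eq:TP:boundedness} of $\TP$, and the stabilisation consistency \eqref{eq:sT:polynomial.consistency}, by $\big(\norm{\btens{L}^2(T;\Matr)}{\btens{\sigma}-\btens{\sigma}^T}+h_T\seminorm{\btens{H}^1(T;\Matr)}{\btens{\sigma}-\btens{\sigma}^T}+h_T^2\seminorm{\btens{H}^2(T;\Matr)}{\btens{\sigma}-\btens{\sigma}^T}\big)\,\norm{\DIV\VDIV,T}{\utens{\tau}_T}$. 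Standard polynomial approximation estimates on regular mesh sequences then bound the bracket by $h_T^{k+1}\seminorm{\btens{H}^{k+1}(T;\Matr)}{\btens{\sigma}}$, and summing over $T$ and dividing by $\norm{\mathcal{Z},h}{(\utens{\tau}_h,v_h)}$ yields the $h^{k+1}\seminorm{\btens{H}^{k+1}(\Th;\Matr)}{\btens{\sigma}}$ contribution.

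Finally, I would recover the full error \eqref{eq:error.estimate} by the triangle inequality: the term just estimated controls $\norm{\mathcal{Z},h}{(\utens{\sigma}_h-\widehat{\utens{\sigma}}_h,u_h-\widehat{u}_h)}$, which is exactly the quantity on the left-hand side of \eqref{eq:error.estimate}, so no further approximation of $\btens{\sigma}$ or $u$ by their interpolants is needed on the right. The appearance of $\seminorm{H^{k+3}(\Th)}{u}$ (rather than anything lower) is where I expect the main subtlety: it must enter through the consistency analysis of the \emph{symmetric matrix potential} $\TP$ and the reconstruction $\gammaF$ — more precisely, when one does \emph{not} simply invoke \eqref{eq:weak} but instead re-derives the consistency error directly from the definition \eqref{eq:TP} of $\TP$ and integrates by parts, the boundary terms involving $\partial_{\normal_F}v$ and $\HESS v$ force $v=u$ to be taken in $H^{k+3}$ on each element so that the approximation of $u$ by $\lproj{k+2}{T}u$ (the degree naturally appearing in \eqref{eq:TP}) is of order $h_T^{k+1}$ in the relevant broken Sobolev seminorms after accounting for the negative powers of $h_T$ in \eqref{eq:DD:boundedness} and \eqref{eq:gammaF:boundedness}. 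Carefully tracking these $h_T$-weights through the trace and inverse inequalities, and confirming that the worst term scales as $h_T^{k+1}\seminorm{H^{k+3}(T)}{u}$, is the only real obstacle; everything else is the routine Strang-lemma bookkeeping sketched above.
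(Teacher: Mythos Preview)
Your overall framework (Strang lemma via the inf-sup condition \eqref{eq:inf-sup:Ah}, reduction to a consistency error, vanishing of the second-equation residual by the commutation \eqref{eq:fortin:2}) is exactly the paper's, and your treatment of the $L^2$-product consistency error $\Eprod$---inserting $\btens{\sigma}^T=\tlproj{k}{T}\btens{\sigma}$, using \eqref{eq:TP:polynomial.consistency} and \eqref{eq:sT:polynomial.consistency}, then \eqref{eq:fortin:1}, \eqref{eq:TP:boundedness} and the norm equivalence---matches the paper's Lemma on $\Eprod$ almost verbatim.

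There is, however, a genuine gap in the middle. You claim that
\[
\int_\Omega\btens{\sigma}:\TP[h]\utens{\tau}_h + \int_\Omega\DD{h}\utens{\tau}_h\,u = 0
\]
``by the weak formulation \eqref{eq:weak} tested against $\TP[h]\utens{\tau}_h$''. This does not hold: $\TP[h]\utens{\tau}_h$ is only a broken polynomial and is \emph{not} in $\Hdivdiv{\Omega}{\Symm}$, so it is not an admissible test function in \eqref{eq:weak}; and even formally, $\DD{h}\utens{\tau}_h$ is not $\DIV\VDIV(\TP[h]\utens{\tau}_h)$. The quantity above is precisely (up to sign) the \emph{adjoint divdiv consistency error}
\[
\Edivdiv(u;\utens{\tau}_h)\coloneq\int_\Omega\HESS u:\TP[h]\utens{\tau}_h - \int_\Omega u\,\DD{h}\utens{\tau}_h,
\]
obtained after inserting the pointwise identity $\btens{\sigma}=-\HESS u$ (i.e.\ adding $\int_\Omega(\btens{\sigma}+\HESS u):\TP[h]\utens{\tau}_h=0$, which \emph{is} legitimate). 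This term is \emph{not} zero; it is $O(h^{k+1})$ and is exactly where the $\seminorm{H^{k+3}(\Th)}{u}$ contribution enters. The paper therefore decomposes the consistency error as $\Eprod(\btens{\sigma};\utens{\tau}_h)+\Edivdiv(u;\utens{\tau}_h)$ and proves a separate lemma bounding $\Edivdiv$.

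Your final paragraph does converge on the right idea: the bound on $\Edivdiv$ is obtained by writing \eqref{eq:TP} with $v=v_T\coloneq\lproj{k+2}{T}u$ and $\btens{\tau}=\btens{0}$, subtracting it from the definition of $\Edivdiv$, and inserting the continuous traces of $u$ and $\partial_{\normal_F}u$ into the boundary sums (using their single-valuedness and the clamped boundary conditions). The resulting terms are then estimated by Cauchy--Schwarz together with \eqref{eq:DD:boundedness}, \eqref{eq:TP:boundedness}, \eqref{eq:gammaF:boundedness} and the approximation properties of $\lproj{k+2}{T}$, which indeed forces $u\in H^{k+3}(\Th)$. So the fix is simply to \emph{replace} the incorrect ``bracket vanishes'' step by this adjoint-consistency estimate, rather than treating it as a side remark.
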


\begin{proof}
  See Section \ref{sec:error.estimate}.
\end{proof}

\subsection{Numerical examples}
The numerical scheme \eqref{eq:discrete} was implemented using the HArDCore library (see \url{https://github.com/jdroniou/HArDCore}).
In order to validate the error estimate of Theorem \ref{thm:error.estimate}, we consider the following manufactured solution 
\[
u = x^2(1 - x)^2 y^2(1-y)^2 z^2(1-z)^2,\qquad
\btens{\sigma} = -\HESS u
\]
on the domain $\Omega = [0,1]^3$.
The method supports arbitrary polyhedral meshes, so we have considered three mesh sequences: cubic, tetrahedral (generated using Tetgen), and Voronoi.
In Figure \ref{fig:convrate3D} we depict, for each mesh sequence, the error measure in the left-hand side of \eqref{eq:error.estimate} as a function of the mesh size for polynomial degrees $k$ between 0 and 3.
The convergence plots show good agreement between the observed and predicted convergence rates.
For the Tetgen mesh family, a saturation of the error is observed for $k=0$.
A slight reduction of the convergence rate is also observed for the finest meshes of the Tetgen mesh family with $k=1$ and of the Voronoi mesh family for $k\in\{0,1\}$.
In both cases, however, the slope is still close to the theoretical one.

\begin{figure}
  \begin{center}
  \ref{br.conv.voro}\medskip
  \\
    \begin{minipage}[c]{0.30\columnwidth}\centering
      \includegraphics[width=0.9\columnwidth,height=0.2\textheight,keepaspectratio]{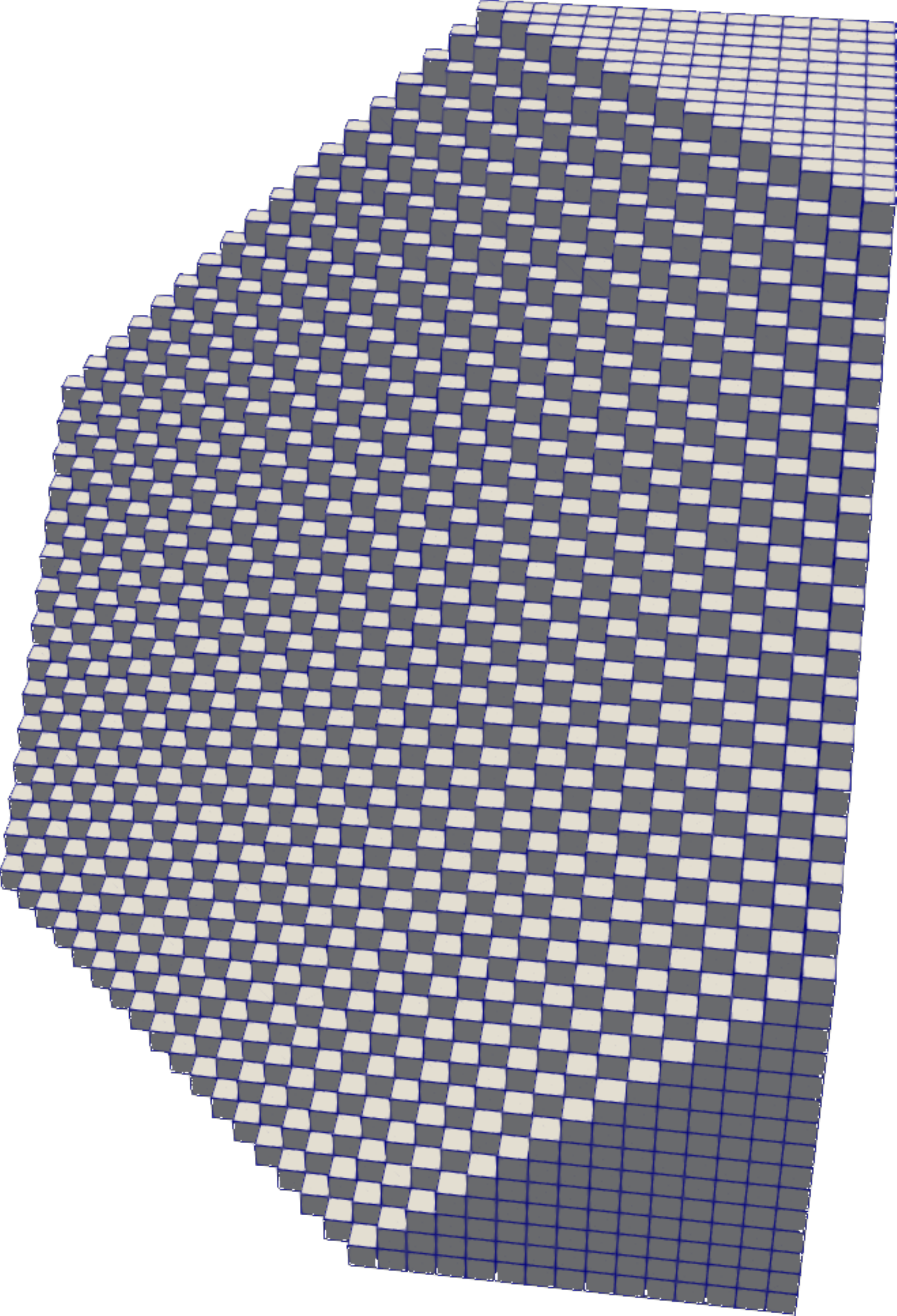}
    \end{minipage}
    \begin{minipage}[c]{0.50\columnwidth}\centering
      \begin{tikzpicture}
        \begin{loglogaxis}[legend columns=4, legend to name=br.conv.voro]
          \addplot +[mark=+, style=solid, color=blue] table[x index=0,y index=4] {data/3D/1/sol_1_k0_detailed.dat};
          \addplot +[mark=triangle, style=solid, color=red] table[x index=0,y index=4] {data/3D/1/sol_1_k1_detailed.dat};
          \addplot +[mark=square, style=solid, color=brown] table[x index=0,y index=4] {data/3D/1/sol_1_k2_detailed.dat};
          \addplot +[mark=pentagon, style=solid, color=darkgray] table[x index=0,y index=4] {data/3D/1/sol_1_k3_detailed.dat};
          \logLogSlopeTriangle{0.90}{0.25}{0.1}{1}{blue};
          \logLogSlopeTriangle{0.90}{0.25}{0.1}{2}{red};
          \logLogSlopeTriangle{0.90}{0.25}{0.1}{3}{brown};
          \logLogSlopeTriangle{0.90}{0.25}{0.1}{4}{darkgray};
          \legend{$k=0$,$k=1$,$k=2$,$k=3$};
        \end{loglogaxis}        
      \end{tikzpicture}
    \end{minipage}
    \subcaption{Cubic mesh family}\medskip
    \begin{minipage}[c]{0.30\columnwidth}\centering
      \includegraphics[width=0.9\columnwidth,height=0.2\textheight,keepaspectratio]{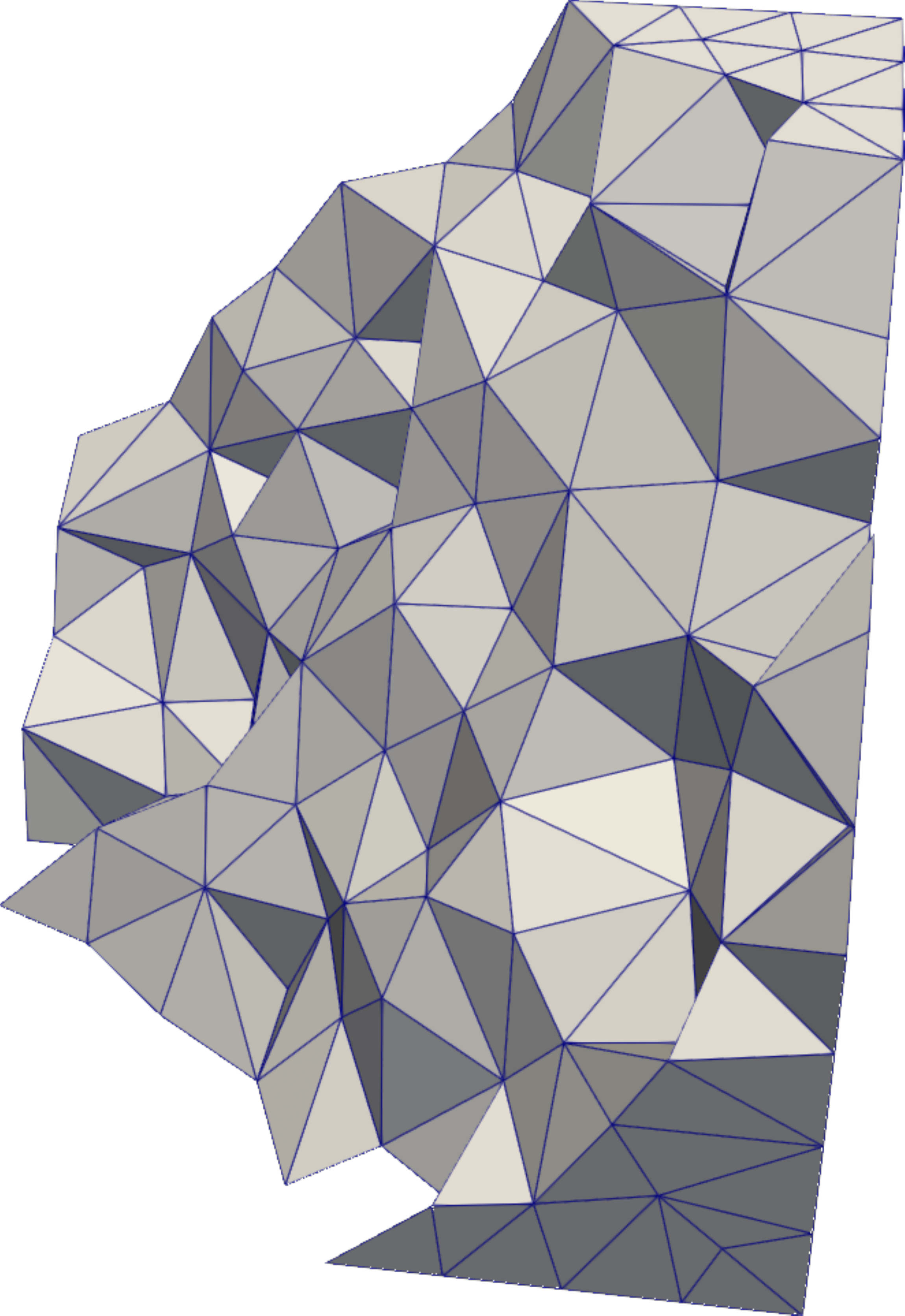}
    \end{minipage}
    \begin{minipage}[c]{0.50\columnwidth}\centering
      \begin{tikzpicture}
        \begin{loglogaxis}
          \addplot +[mark=triangle, style=solid, color=blue] table[x index=0,y index=4] {data/3D/4/sol_1_k0_detailed.dat};
          \addplot +[mark=triangle, style=solid, color=red] table[x index=0,y index=4] {data/3D/4/sol_1_k1_detailed.dat};
          \addplot +[mark=square, style=solid, color=brown] table[x index=0,y index=4] {data/3D/4/sol_1_k2_detailed.dat};
          \addplot +[mark=square, style=solid, color=darkgray] table[x index=0,y index=4] {data/3D/4/sol_1_k3_detailed.dat};
          \logLogSlopeTriangle{0.90}{0.4}{0.1}{1}{blue};
          \logLogSlopeTriangle{0.90}{0.4}{0.1}{2}{red};
          \logLogSlopeTriangle{0.90}{0.4}{0.1}{3}{brown};
          \logLogSlopeTriangle{0.90}{0.4}{0.1}{4}{darkgray};
        \end{loglogaxis}
      \end{tikzpicture}
    \end{minipage}
    \subcaption{Tetgen tetrahedral mesh family}\medskip
    \begin{minipage}[c]{0.30\columnwidth}\centering
      \includegraphics[width=0.9\columnwidth,height=0.2\textheight,keepaspectratio]{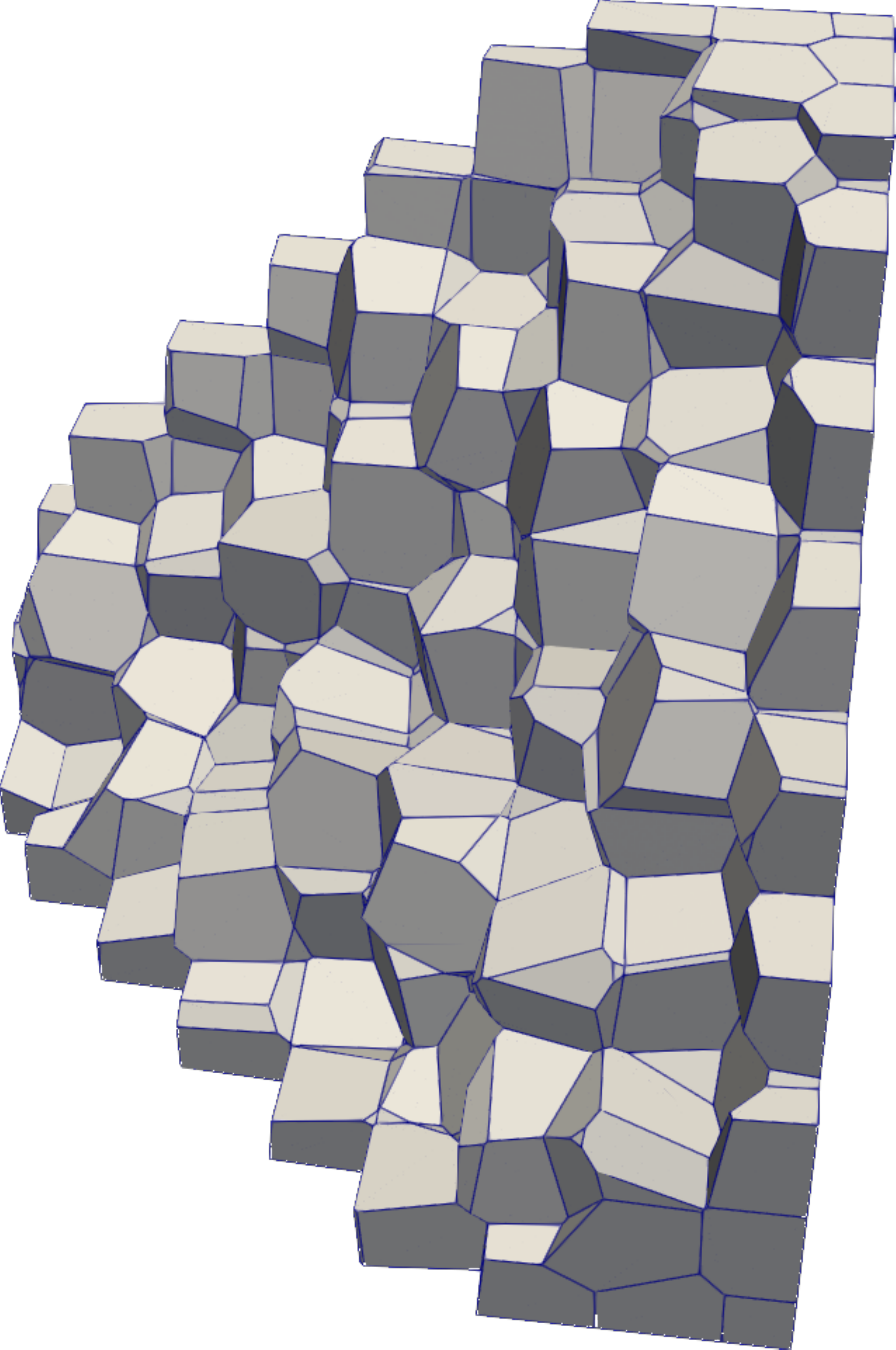}
    \end{minipage}
    \begin{minipage}[c]{0.50\columnwidth}\centering
      \begin{tikzpicture}
        \begin{loglogaxis}
          \addplot +[mark=+, style=solid, color=blue] table[x index=0,y index=4] {data/3D/5/sol_1_k0_detailed.dat};
          \addplot +[mark=triangle, style=solid, color=red] table[x index=0,y index=4] {data/3D/5/sol_1_k1_detailed.dat};
          \addplot +[mark=square, style=solid, color=brown] table[x index=0,y index=4] {data/3D/5/sol_1_k2_detailed.dat};
          \addplot +[mark=square, style=solid, color=darkgray] table[x index=0,y index=4] {data/3D/5/sol_1_k3_detailed.dat};
          \logLogSlopeTriangle{0.90}{0.25}{0.1}{1}{blue};
          \logLogSlopeTriangle{0.90}{0.25}{0.1}{2}{red};
          \logLogSlopeTriangle{0.90}{0.25}{0.1}{3}{brown};
          \logLogSlopeTriangle{0.90}{0.25}{0.1}{4}{darkgray};
        \end{loglogaxis}
      \end{tikzpicture}
    \end{minipage}
    \subcaption{Voronoi mesh family}
  \end{center}
  \caption{Sample mesh (left) and error $\norm{\mathcal{Z},h}{(\utens{\sigma}_h - \IDivDiv{h}\btens{\sigma}, u_h - \lproj{k}{h} u)}$ v. mesh size $h$ (right).}
  \label{fig:convrate3D}
\end{figure}

\subsection{Well-posedness}\label{sec:discrete.problem:well-posedness}

This section contains the proof Theorem \ref{thm:well-posedness} preceeded by two preliminary results: a uniform equivalence of discrete $L^2$-norms and an inf-sup condition on the discrete divdiv operator.

\begin{lemma}[Uniform norm equivalence]
  Recalling the definitions \eqref{eq:discrete.L2.norm} and \eqref{eq:tnorm} of the discrete $L^2$- and component norms, it holds
  \begin{equation}\label{eq:norm.equivalence}
    \norm{\DIV\VDIV,\bullet}{\utens{\tau}_\bullet}
    \lesssim \tnorm{\DIV\VDIV,\bullet}{\utens{\tau}_\bullet}
    \lesssim \norm{\DIV\VDIV,\bullet}{\utens{\tau}_\bullet}
    \qquad\forall\utens{\tau}_\bullet\in\uHdivdiv{\bullet}.
  \end{equation}
\end{lemma}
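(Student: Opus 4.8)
The plan is to prove the equivalence elementwise, i.e.\ for $\bullet = T\in\Th$, and then deduce the global case $\bullet = h$ by squaring and summing over $T\in\Th$: both $\norm{\DIV\VDIV,h}{{\cdot}}^2$ and $\tnorm{\DIV\VDIV,h}{{\cdot}}^2$ are sums of their elementwise counterparts, so a uniform elementwise equivalence transfers verbatim to the mesh. The whole argument rests on one auxiliary estimate obtained by combining the Fortin-type interpolation bound \eqref{eq:fortin:1} with discrete inverse inequalities on $T$ (licit since $\Mh$ belongs to a regular mesh sequence): for every $\btens{w}\in\tPoly{k}(T;\Symm)$,
\[
  \tnorm{\DIV\VDIV,T}{\IDivDiv{T}\btens{w}}
  \lesssim \norm{\btens{L}^2(T;\Matr)}{\btens{w}},
\]
because $h_T\seminorm{\btens{H}^1(T;\Matr)}{\btens{w}} + h_T^2\seminorm{\btens{H}^2(T;\Matr)}{\btens{w}} \lesssim \norm{\btens{L}^2(T;\Matr)}{\btens{w}}$ for polynomials of degree $\le k$, with a constant depending only on the mesh regularity and $k$. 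Note that $\TP\utens{\tau}_T$ is a polynomial, hence in $\btens{H}^2(T;\Symm)$, so $\IDivDiv{T}\TP\utens{\tau}_T$ is well defined and the estimate applies to $\btens{w}=\TP\utens{\tau}_T$.

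For the left inequality $\norm{\DIV\VDIV,T}{\utens{\tau}_T} \lesssim \tnorm{\DIV\VDIV,T}{\utens{\tau}_T}$, I would bound separately the two contributions in $\norm{\DIV\VDIV,T}{\utens{\tau}_T}^2 = \int_T\TP\utens{\tau}_T:\TP\utens{\tau}_T + s_T(\utens{\tau}_T,\utens{\tau}_T)$ coming from \eqref{eq:L2.prod:T}. The first is controlled directly by the boundedness \eqref{eq:TP:boundedness} of the potential, $\norm{\btens{L}^2(T;\Matr)}{\TP\utens{\tau}_T}\lesssim\tnorm{\DIV\VDIV,T}{\utens{\tau}_T}$. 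For the second, using the definition \eqref{eq:sT} of $s_T$, the identity \eqref{eq:tnorm}, and a triangle inequality, $s_T(\utens{\tau}_T,\utens{\tau}_T)^{\frac12} = \tnorm{\DIV\VDIV,T}{\IDivDiv{T}\TP\utens{\tau}_T - \utens{\tau}_T} \le \tnorm{\DIV\VDIV,T}{\IDivDiv{T}\TP\utens{\tau}_T} + \tnorm{\DIV\VDIV,T}{\utens{\tau}_T}$; the first term on the right is $\lesssim\norm{\btens{L}^2(T;\Matr)}{\TP\utens{\tau}_T}\lesssim\tnorm{\DIV\VDIV,T}{\utens{\tau}_T}$ by the auxiliary estimate and \eqref{eq:TP:boundedness}, which closes this direction.

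For the right inequality $\tnorm{\DIV\VDIV,T}{\utens{\tau}_T} \lesssim \norm{\DIV\VDIV,T}{\utens{\tau}_T}$, I would write $\utens{\tau}_T = \bigl(\utens{\tau}_T - \IDivDiv{T}\TP\utens{\tau}_T\bigr) + \IDivDiv{T}\TP\utens{\tau}_T$ and apply the triangle inequality for $\tnorm{\DIV\VDIV,T}{{\cdot}}$. The first summand has component norm equal to $s_T(\utens{\tau}_T,\utens{\tau}_T)^{\frac12}\le\norm{\DIV\VDIV,T}{\utens{\tau}_T}$ by \eqref{eq:sT}--\eqref{eq:L2.prod:T} (the potential contribution being nonnegative); the second is $\lesssim\norm{\btens{L}^2(T;\Matr)}{\TP\utens{\tau}_T}$ by the auxiliary estimate, and $\norm{\btens{L}^2(T;\Matr)}{\TP\utens{\tau}_T} = \bigl(\int_T\TP\utens{\tau}_T:\TP\utens{\tau}_T\bigr)^{\frac12}\le\norm{\DIV\VDIV,T}{\utens{\tau}_T}$ directly from \eqref{eq:L2.prod:T}. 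I do not expect a substantial obstacle; the only points demanding care are that $\TP\utens{\tau}_T$ is smooth enough (trivially, being polynomial) so that \eqref{eq:fortin:1} applies, and that the $h_T$-weights in the component inner product \eqref{eq:component.L2.prod} are precisely those absorbed by the inverse inequalities, which is what makes the constants uniform over a regular mesh sequence.
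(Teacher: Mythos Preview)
Your proposal is correct and follows essentially the same route as the paper: prove the equivalence locally, use the triangle-inequality splitting around $\IDivDiv{T}\TP\utens{\tau}_T$ for both directions, and bound $\tnorm{\DIV\VDIV,T}{\IDivDiv{T}\TP\utens{\tau}_T}$ via \eqref{eq:fortin:1} combined with inverse inequalities and \eqref{eq:TP:boundedness}. The only cosmetic difference is that you isolate the auxiliary bound $\tnorm{\DIV\VDIV,T}{\IDivDiv{T}\btens{w}}\lesssim\norm{\btens{L}^2(T;\Matr)}{\btens{w}}$ for polynomial $\btens{w}$ upfront, whereas the paper derives it inline as \eqref{eq:norm.equivalence:norm.lesssim.tnorm:2}.
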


\begin{proof}
  It suffices to prove \eqref{eq:norm.equivalence} for $\bullet = T\in\Th$, as the result for $\bullet = h$ follows squaring, summing over $T\in\Th$, and passing to square roots.
  We start by proving that
  \begin{equation}\label{eq:norm.equivalence:norm.lesssim.tnorm}
    \norm{\DIV\VDIV,T}{\utens{\tau}_T}
    \lesssim\tnorm{\DIV\VDIV,T}{\utens{\tau}_T}
    \qquad\forall\utens{\tau}_T\in\uHdivdiv{T}.
  \end{equation}
  To this end, we take a generic $\utens{\tau}_T\in\uHdivdiv{T}$ and use \eqref{eq:sT} to write
  \begin{equation}\label{eq:norm.equivalence:norm.lesssim.tnorm:1}
    s_T(\utens{\tau}_T, \utens{\tau}_T)^{\frac12}
    = \tnorm{\DIV\VDIV,T}{\IDivDiv{T}\TP\utens{\tau}_T - \utens{\tau}_T}
    \le \tnorm{\DIV\VDIV,T}{\IDivDiv{T}\TP\utens{\tau}_T}
    + \tnorm{\DIV\VDIV,T}{\utens{\tau}_T},
  \end{equation}
  where the conclusion follows from a triangle inequality.
  We then use the boundedness \eqref{eq:fortin:1} of the interpolator, discrete inverse inequalities, and the boundedness \eqref{eq:TP:boundedness} of $\TP$ to write
  \begin{equation}\label{eq:norm.equivalence:norm.lesssim.tnorm:2}
    \begin{aligned}
      \tnorm{\DIV\VDIV,T}{\IDivDiv{T}\TP\utens{\tau}_T}
      &\lesssim
      \norm{\btens{L}^2(T;\Matr)}{\TP\utens{\tau}_T}
      + h_T\seminorm{\btens{H}^1(T;\Matr)}{\TP\utens{\tau}_T}
      + h_T^2\seminorm{\btens{H}^2(T;\Matr)}{\TP\utens{\tau}_T}
      \\
      &\lesssim\norm{\btens{L}^2(T;\Matr)}{\TP\utens{\tau}_T}
      \lesssim\tnorm{\DIV\VDIV,T}{\utens{\tau}_T}.
    \end{aligned}
  \end{equation}
  Plugging the above estimate into \eqref{eq:norm.equivalence:norm.lesssim.tnorm:1}, we get
  $s_T(\utens{\tau}_T, \utens{\tau}_T)^{\frac12}\lesssim\tnorm{\DIV\VDIV,T}{\utens{\tau}_T}$
  which, combined with the boundedness \eqref{eq:TP:boundedness} of $\TP$, yields \eqref{eq:norm.equivalence:norm.lesssim.tnorm}.
  \medskip
  
  Let us now prove the converse inequality
  \begin{equation}\label{eq:norm.equivalence:tnorm.lesssim.norm}
    \tnorm{\DIV\VDIV,T}{\utens{\tau}_T}
    \lesssim\norm{\DIV\VDIV,T}{\utens{\tau}_T}
    \qquad\forall\utens{\tau}_T\in\uHdivdiv{T}.
  \end{equation}
  To this purpose, we start using a triangle inequality to write
  \begin{equation}\label{eq:norm.equivalence:tnorm.lesssim.norm:1}
    \tnorm{\DIV\VDIV,T}{\utens{\tau}_T}
    \le\tnorm{\DIV\VDIV,T}{\IDivDiv{T}\TP\utens{\tau}_T - \utens{\tau}_T}
    + \tnorm{\DIV\VDIV,T}{\IDivDiv{T}\TP\utens{\tau}_T}
    \eqcolon\term_1 + \term_2.
  \end{equation}
  For the first term, we recall \eqref{eq:sT} to write $\term_1 = s_T(\utens{\tau}_T, \utens{\tau}_T)^{\frac12}\le\norm{\DIV\VDIV,T}{\utens{\tau}_T}$, where the conclusion follows from the definitions \eqref{eq:discrete.L2.norm} of $\norm{\DIV\VDIV,T}{{\cdot}}$ and \eqref{eq:L2.prod:T} of the local discrete $L^2$-product.
  For the second term, we use the second line of \eqref{eq:norm.equivalence:norm.lesssim.tnorm:2} and again the definitions recalled above to write $\term_2\le\norm{\DIV\VDIV,T}{\utens{\tau}_T}$.
  Plugging the above estimates into \eqref{eq:norm.equivalence:tnorm.lesssim.norm:1} concludes the proof of \eqref{eq:norm.equivalence:tnorm.lesssim.norm}.
\end{proof}

\begin{lemma}[Inf-sup condition on {$\DD{h}$}]\label{lem:inf-sup:DDh}
  The following inf-sup condition holds uniformly in $h$:
  \begin{equation}\label{eq:inf-sup:DDh}
    1\lesssim
    \inf_{v_h\in\Poly{k}(\Th)\setminus\{0\}}\sup_{\utens{\tau}_h\in\uHdivdiv{h}\setminus\{\utens{0}\}}
    \frac{\int_\Omega\DD{h}\utens{\tau}_h~v_h}{\norm{\DIV\VDIV,h}{\utens{\tau}_h}\norm{L^2(\Omega)}{v_h}}.
  \end{equation}
\end{lemma}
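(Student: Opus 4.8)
The plan is to prove \eqref{eq:inf-sup:DDh} by a Fortin-type argument. Thanks to the norm equivalence \eqref{eq:norm.equivalence}, it suffices to exhibit, for any given $v_h \in \Poly{k}(\Th) \setminus \{0\}$, an element $\utens{\tau}_h \in \uHdivdiv{h}$ such that $\int_\Omega \DD{h}\utens{\tau}_h\,v_h \gtrsim \norm{L^2(\Omega)}{v_h}^2$ and $\tnorm{\DIV\VDIV,h}{\utens{\tau}_h} \lesssim \norm{L^2(\Omega)}{v_h}$; plugging such a $\utens{\tau}_h$ into the supremum, using \eqref{eq:norm.equivalence} once more to replace $\tnorm{\DIV\VDIV,h}{{\cdot}}$ by $\norm{\DIV\VDIV,h}{{\cdot}}$, and taking the infimum over $v_h$ last then yields the claimed bound.

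The building block is the surjectivity of the continuous operator $\DIV\VDIV : \Hdivdiv{\Omega}{\Symm} \to L^2(\Omega)$ with a bounded right inverse, which holds on any bounded Lipschitz domain (take $\btens{\tau} = \HESS u$ with $u \in H^2_0(\Omega)$ solving $\Delta^2 u = v_h$, so that $\DIV\VDIV\btens{\tau} = v_h$ and $\norm{\Hdivdiv{\Omega}{\Symm}}{\btens{\tau}} \lesssim \norm{L^2(\Omega)}{v_h}$; this is also precisely the exactness of \eqref{eq:complex} at $L^2(\Omega)$, a consequence of the well-posedness of \eqref{eq:weak}). The catch is that this $\btens{\tau}$ only lies in $\btens{L}^2(\Omega;\Symm)$, whereas feeding it to the interpolator $\IDivDiv{h}$ requires single-valued, element-wise $\btens{H}^2$ data together with the corresponding face and edge traces, and on a general polyhedral domain no such elliptic regularity is available. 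The remedy is a regularisation step: one replaces $\btens{\tau}$ by a smoothed field $\btens{\tau}_h$ — either invoking the regularity $\btens{\tau}\in\btens{H}^2(\Omega;\Symm)$ available on suitable domains, or composing $\btens{\tau}$ with a commuting mollification operator of the DDR toolbox whose smoothing length is synchronised with the meshsize — so that $\IDivDiv{h}\btens{\tau}_h$ is well-defined, $\norm{L^2(\Omega)}{\DIV\VDIV\btens{\tau}_h - v_h} \le \tfrac12\norm{L^2(\Omega)}{v_h}$ (or even $\DIV\VDIV\btens{\tau}_h = v_h$), and, by \eqref{eq:fortin:1} applied on each $T$, the bound $h_T \le h$, and the $h$-weighting produced by the mollification,
\[
  \tnorm{\DIV\VDIV,h}{\IDivDiv{h}\btens{\tau}_h}
  \lesssim \norm{\btens{L}^2(\Omega;\Matr)}{\btens{\tau}_h}
  + h\seminorm{\btens{H}^1(\Omega;\Matr)}{\btens{\tau}_h}
  + h^2\seminorm{\btens{H}^2(\Omega;\Matr)}{\btens{\tau}_h}
  \lesssim \norm{L^2(\Omega)}{v_h}.
\]

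With $\utens{\tau}_h \coloneq \IDivDiv{h}\btens{\tau}_h$ in hand, I would conclude using the commutation property \eqref{eq:fortin:2}, which gives $\DD{h}\utens{\tau}_h = \lproj{k}{h}(\DIV\VDIV\btens{\tau}_h)$; since $v_h \in \Poly{k}(\Th)$, a Cauchy--Schwarz inequality then yields
\[
  \int_\Omega \DD{h}\utens{\tau}_h\,v_h
  = \int_\Omega \DIV\VDIV\btens{\tau}_h\,v_h
  \ge \norm{L^2(\Omega)}{v_h}^2 - \norm{L^2(\Omega)}{\DIV\VDIV\btens{\tau}_h - v_h}\,\norm{L^2(\Omega)}{v_h}
  \gtrsim \norm{L^2(\Omega)}{v_h}^2 .
\]
Combined with $\norm{\DIV\VDIV,h}{\utens{\tau}_h} \lesssim \tnorm{\DIV\VDIV,h}{\utens{\tau}_h} \lesssim \norm{L^2(\Omega)}{v_h}$ (from \eqref{eq:norm.equivalence} and the previous step), this gives the lower bound in \eqref{eq:inf-sup:DDh}. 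The one genuinely delicate point is the regularisation: bridging the gap between the merely $\Hdivdiv{\Omega}{\Symm}$-regular continuous lift and the smoothness plus single-valuedness demanded by the DDR interpolator, while keeping the $h$-weighted seminorms in \eqref{eq:fortin:1} under control by $\norm{L^2(\Omega)}{v_h}$, is the heart of the matter; everything else is routine.
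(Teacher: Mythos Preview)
Your overall Fortin-type strategy is correct and matches the paper's: combine the commutation property \eqref{eq:fortin:2} with the boundedness \eqref{eq:fortin:1} of the interpolator and the norm equivalence \eqref{eq:norm.equivalence}, then invoke an abstract $B$-compatibility argument. The gap lies in the continuous lifting step, which you yourself flag as ``the heart of the matter'' and leave open.

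The paper resolves this not by regularising a merely $\Hdivdiv{\Omega}{\Symm}$ lift, but by invoking a \emph{stronger} surjectivity result: $\DIV\VDIV:\btens{H}^2(\Omega;\Symm)\to L^2(\Omega)$ is onto on bounded Lipschitz domains (Pauly--Zulehner, Theorem~3.25(iv)), with a bounded right inverse. This gives directly, for any $v_h$, a lift $\btens{\tau}\in\btens{H}^2(\Omega;\Symm)$ with $\DIV\VDIV\btens{\tau}=v_h$ and $\norm{\btens{H}^2(\Omega;\Matr)}{\btens{\tau}}\lesssim\norm{L^2(\Omega)}{v_h}$. Such a $\btens{\tau}$ is smooth enough to be fed to $\IDivDiv{h}$ as is; using \eqref{eq:fortin:1} together with $h_T\le h\lesssim 1$ (no $h$-weighting needed, just absorb the factors) yields $\norm{\DIV\VDIV,h}{\IDivDiv{h}\btens{\tau}}\lesssim\norm{\btens{H}^2(\Omega;\Matr)}{\btens{\tau}}\lesssim\norm{L^2(\Omega)}{v_h}$, and the argument closes without any mollification. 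Your route via $\btens{\tau}=\HESS u$ with $u\in H^2_0(\Omega)$ solving $\Delta^2 u=v_h$ gives only $\btens{\tau}\in\btens{L}^2$ in general (elliptic regularity for the biharmonic problem fails on nonconvex polyhedra), and the mollification/regularisation you sketch is neither specified nor obviously available in this setting; the Pauly--Zulehner regular decomposition is precisely the missing ingredient.
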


\begin{proof}
  From the boundedness property \eqref{eq:fortin:1} of the interpolator along with the uniform norm equivalence \eqref{eq:norm.equivalence} and $h_T\le h\lesssim 1$ for all $T\in\Th$, and \eqref{eq:fortin:2}, it can be inferred that
  \begin{equation}\label{eq:fortin}
    \text{
      $\norm{\DIV\VDIV,h}{\IDivDiv{h}\btens{\tau}}\lesssim\norm{\btens{H}^2(\Omega;\Matr)}{\btens{\tau}}$
      and
      $\DD{h}\IDivDiv{h}\btens{\tau} = \lproj{k}{h}(\DIV\VDIV\btens{\tau})$
      for all $\btens{\tau}\in\btens{H}^2(\Omega;\Symm)$,
    }
  \end{equation}
  where $\lproj{k}{h}$ denotes the $L^2$-orthogonal projector on $\Poly{k}(\Th)$.
  Since $\DIV\VDIV:\btens{H}^2(\Omega;\Symm)\to L^2(\Omega)$ is surjective (see \cite[Point (iv) in Theorem 3.25]{Pauly.Zulehner:20}), this shows that $\DD{h}$ is a $B$-compatible operator in the sense of \cite[Section 8.4.1]{Boffi.Brezzi.ea:13}.
  From the abstract theory therein, it can be inferred that \eqref{eq:inf-sup:DDh} holds.
\end{proof}

We are now ready to prove the well-posedness of the discrete problem.

\begin{proof}[Proof of Theorem \ref{thm:well-posedness}]
  By \eqref{eq:norm}, $(\cdot,\cdot)_{\DIV\VDIV,h}$ is coercive with respect to the norm $\norm{\DIV\VDIV,h}{{\cdot}}$ with coercivity constant equal to 1.
  In conjunction with the inf-sup condition \eqref{eq:inf-sup:DDh} on $\DD{h}$, this classicaly yields \eqref{eq:inf-sup:Ah}.
  The well-posedness of problem \eqref{eq:discrete} (or, equivalently, \eqref{eq:discrete:variational}) then follows from classical arguments (see, e.g., \cite[Proposition 7]{Di-Pietro.Droniou:18}).
\end{proof}

\subsection{Error estimate}\label{sec:error.estimate}

The goal of this section is to prove Theorem \ref{thm:error.estimate}.
To this purpose, we preliminarily need estimates of the discrete $L^2$-product and adjoint divdiv consistency errors.

\begin{lemma}[Estimate of the consistency error for the discrete $L^2$-product]
  Let $\btens{\upsilon}\in\btens{H}^2(\Omega;\Symm)$ and define the discrete $L^2$-product consistency error
  \begin{equation}\label{eq:Eprod}
    \Eprod(\btens{\upsilon};\utens{\tau}_h)
    \coloneq
    \int_\Omega\btens{\upsilon}:\TP[h]\utens{\tau}_h
    - (\IDivDiv{h}\btens{\upsilon},\utens{\tau}_h)_{\DIV\VDIV,h}.
  \end{equation}
  Then, additionally assuming $\btens{\upsilon}\in\btens{H}^{k+1}(\Th;\Symm)$, it holds
  \begin{equation}\label{eq:Eprod:estimate}
    \norm{\DIV\VDIV,h,*}{\Eprod(\btens{\upsilon};\cdot)}
    \lesssim h^{k+1}\seminorm{\btens{H}^{k+1}(\Th;\Matr)}{\btens{\upsilon}}.
  \end{equation}
\end{lemma}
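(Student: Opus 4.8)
The plan is to localise the consistency error over mesh elements, to remove each local contribution up to a high-order remainder by exploiting the polynomial consistency \eqref{eq:TP:polynomial.consistency} of the potential $\TP$ and the polynomial consistency \eqref{eq:sT:polynomial.consistency} of the stabilisation $s_T$, and then to close the estimate with standard approximation properties of the $L^2$-orthogonal projector, the boundedness \eqref{eq:fortin:1} of the interpolator, and the norm equivalence \eqref{eq:norm.equivalence}. First I would observe that, since $(\IDivDiv{h}\btens{\upsilon})_{|T} = \IDivDiv{T}(\btens{\upsilon}_{|T})$, $(\TP[h]\utens{\tau}_h)_{|T} = \TP\utens{\tau}_T$, and $(\cdot,\cdot)_{\DIV\VDIV,h}$ is assembled element-wise, definition \eqref{eq:Eprod} gives $\Eprod(\btens{\upsilon};\utens{\tau}_h) = \sum_{T\in\Th}\bigl(\int_T\btens{\upsilon}:\TP\utens{\tau}_T - (\IDivDiv{T}\btens{\upsilon},\utens{\tau}_T)_{\DIV\VDIV,T}\bigr)$, so it suffices to bound each summand.

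Fixing $T\in\Th$ and setting $\btens{\upsilon}_\pi\coloneq\tlproj{k}{T}\btens{\upsilon}\in\tPoly{k}(T;\Symm)$, I would use the linearity of $\IDivDiv{T}$, the polynomial consistency \eqref{eq:TP:polynomial.consistency} (which gives $\TP\IDivDiv{T}\btens{\upsilon}_\pi = \btens{\upsilon}_\pi$), the polynomial consistency \eqref{eq:sT:polynomial.consistency} (which gives $s_T(\IDivDiv{T}\btens{\upsilon}_\pi,\utens{\tau}_T) = 0$), and the definition \eqref{eq:L2.prod:T} of $(\cdot,\cdot)_{\DIV\VDIV,T}$ to get $(\IDivDiv{T}\btens{\upsilon},\utens{\tau}_T)_{\DIV\VDIV,T} = \int_T\btens{\upsilon}_\pi:\TP\utens{\tau}_T + (\IDivDiv{T}(\btens{\upsilon}-\btens{\upsilon}_\pi),\utens{\tau}_T)_{\DIV\VDIV,T}$, whence the local contribution rewrites as
\[
\int_T(\btens{\upsilon}-\btens{\upsilon}_\pi):\TP\utens{\tau}_T - (\IDivDiv{T}(\btens{\upsilon}-\btens{\upsilon}_\pi),\utens{\tau}_T)_{\DIV\VDIV,T}.
\]

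To bound the first term I would apply Cauchy--Schwarz and $\norm{\btens{L}^2(T;\Matr)}{\TP\utens{\tau}_T}\le\norm{\DIV\VDIV,T}{\utens{\tau}_T}$ (immediate from \eqref{eq:L2.prod:T}, \eqref{eq:discrete.L2.norm} and the positivity of $s_T$); to bound the second I would use Cauchy--Schwarz for the symmetric positive semi-definite form $(\cdot,\cdot)_{\DIV\VDIV,T}$, then the norm equivalence \eqref{eq:norm.equivalence}, and finally the boundedness \eqref{eq:fortin:1} of $\IDivDiv{T}$ applied to $\btens{\upsilon}-\btens{\upsilon}_\pi\in\btens{H}^2(T;\Symm)$. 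This yields $|\Eprod^T(\btens{\upsilon};\utens{\tau}_T)|\lesssim\bigl(\norm{\btens{L}^2(T;\Matr)}{\btens{\upsilon}-\btens{\upsilon}_\pi} + h_T\seminorm{\btens{H}^1(T;\Matr)}{\btens{\upsilon}-\btens{\upsilon}_\pi} + h_T^2\seminorm{\btens{H}^2(T;\Matr)}{\btens{\upsilon}-\btens{\upsilon}_\pi}\bigr)\norm{\DIV\VDIV,T}{\utens{\tau}_T}$; invoking the approximation properties of $\tlproj{k}{T}$ on the regular mesh sequence (using $\btens{\upsilon}_{|T}\in\btens{H}^{k+1}(T;\Symm)$) turns the parenthesis into $h_T^{k+1}\seminorm{\btens{H}^{k+1}(T;\Matr)}{\btens{\upsilon}}$. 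Summing over $T\in\Th$, a discrete Cauchy--Schwarz inequality and the definition \eqref{eq:discrete.L2.norm} of $\norm{\DIV\VDIV,h}{\cdot}$ give $|\Eprod(\btens{\upsilon};\utens{\tau}_h)|\lesssim h^{k+1}\seminorm{\btens{H}^{k+1}(\Th;\Matr)}{\btens{\upsilon}}\norm{\DIV\VDIV,h}{\utens{\tau}_h}$ for all $\utens{\tau}_h\in\uHdivdiv{h}$, which is \eqref{eq:Eprod:estimate} by definition of the dual norm. The whole argument is essentially bookkeeping; the only step deserving care is the $h_T^2\seminorm{\btens{H}^2(T;\Matr)}{\btens{\upsilon}-\btens{\upsilon}_\pi}$ contribution, whose $O(h^{k+1})$ control rests on $\seminorm{\btens{H}^2(T;\Matr)}{\btens{\upsilon}-\tlproj{k}{T}\btens{\upsilon}}\lesssim h_T^{k-1}\seminorm{\btens{H}^{k+1}(T;\Matr)}{\btens{\upsilon}}$ when $k\ge1$, whereas for $k=0$ it is of order $h^2$ — hence subdominant — thanks to the standing $\btens{H}^2(\Omega)$ regularity of $\btens{\upsilon}$, which is also what makes $\IDivDiv{h}\btens{\upsilon}$ meaningful.
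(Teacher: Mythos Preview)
Your proof is correct and follows essentially the same approach as the paper: localise, subtract $\tlproj{k}{T}\btens{\upsilon}$ using the polynomial consistency \eqref{eq:TP:polynomial.consistency} and \eqref{eq:sT:polynomial.consistency}, then close with the interpolator bound \eqref{eq:fortin:1}, the norm equivalence \eqref{eq:norm.equivalence}, and approximation properties of $\tlproj{k}{T}$. The only cosmetic difference is that the paper splits your second term $-(\IDivDiv{T}(\btens{\upsilon}-\btens{\upsilon}_\pi),\utens{\tau}_T)_{\DIV\VDIV,T}$ into its potential and stabilisation parts and estimates each separately via \eqref{eq:TP:boundedness}, whereas you apply Cauchy--Schwarz directly on the discrete inner product; your bundling is marginally more streamlined. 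Your closing remark on the $k=0$ case is a valid caveat that the paper glosses over.
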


\begin{proof}
  We start by decomposing \eqref{eq:Eprod} as follows:
  \[
  \Eprod(\btens{\upsilon};\utens{\tau}_h)
  = \sum_{T\in\Th}\left[
    \term_1(T) + \term_2(T) + \term_3(T)
    \right]
  \]
  where, recalling that $\TP\IDivDiv{T}(\tlproj{k}{T}\btens{\upsilon}) = \tlproj{k}{T}\btens{\upsilon}$ by \eqref{eq:TP:polynomial.consistency},
  \[
  \begin{gathered}
    \term_1(T)
    \coloneq\int_T(\btens{\upsilon} - \tlproj{k}{T}\btens{\upsilon}):\TP\utens{\tau}_T,
    \qquad
    \term_2(T)
    \coloneq\int_T\TP\IDivDiv{T}(\tlproj{k}{T}\btens{\upsilon} - \btens{\upsilon}):\TP\utens{\tau}_T,
    \\
    \term_3(T)
    \coloneq -s_T(\IDivDiv{T}\btens{\upsilon},\utens{\tau}_T).
  \end{gathered}
  \]
  We next proceed to estimate these terms one by one.
  
  The first term is readily treated using a Cauchy--Schwarz inequality along with the approximation properties of the $L^2$-orthogonal projector (see \cite[Lemma 3.1]{Di-Pietro.Droniou:17*1} and \cite[Theorem 1.45]{Di-Pietro.Droniou:20}) for the first factor and the definitions \eqref{eq:discrete.L2.norm} of $\norm{\DIV\VDIV,T}{{\cdot}}$ and \eqref{eq:L2.prod:T} of the local discrete $L^2$-product for the second:
  \[%% \begin{equation}\label{eq:Eprod:T1(T)}
  |\term_1(T)|
  \lesssim
  \norm{\btens{L}^2(T;\Matr)}{\btens{\upsilon} - \tlproj{k}{T}\btens{\upsilon}}\norm{\btens{L}^2(T;\Matr)}{\TP\utens{\tau}_T}
  \lesssim
  h^{k+1}\seminorm{\btens{H}^{k+1}(T;\Matr)}{\btens{\upsilon}}\norm{\DIV\VDIV,T}{\utens{\tau}_T}.
  \]%% \end{equation}

  For the second term, we preliminarily notice that
  \begin{equation}\label{eq:Eprod:T2(T):basic}
    \norm{\btens{L}^2(T;\Matr)}{\TP\IDivDiv{T}(\tlproj{k}{T}\btens{\upsilon} - \btens{\upsilon})}
    \overset{\eqref{eq:TP:boundedness}}\lesssim\tnorm{\DIV\VDIV,T}{\IDivDiv{T}(\tlproj{k}{T}\btens{\upsilon} - \btens{\upsilon})}
    \lesssim h^{k+1}\seminorm{\btens{H}^{k+1}(T;\Matr)}{\btens{\upsilon}},
  \end{equation}
  where the conclusion follows combining the boundedness \eqref{eq:fortin:1} of $\IDivDiv{T}$ written for $\btens{\tau} = \btens{\upsilon} - \tlproj{k}{T}\btens{\upsilon}$ 
  with the approximation properties of the $L^2$-orthogonal projector and $h_T\le h$ for all $T\in\Th$.
  We can then use a Cauchy--Schwarz inequality along with \eqref{eq:discrete.L2.norm} and \eqref{eq:L2.prod:T} as for $\term_1(T)$ to write
  \[%% \begin{equation}\label{eq:Eprod:T2(T)}
  |\term_2(T)|
  \lesssim
  \norm{\btens{L}^2(T;\Matr)}{\TP\IDivDiv{T}(\btens{\upsilon} - \tlproj{k}{T}\btens{\upsilon})}\norm{\btens{L}^2(T;\Matr)}{\TP\utens{\tau}_T}
  \lesssim
  h^{k+1}\seminorm{\btens{H}^{k+1}(T;\Matr)}{\btens{\upsilon}}\norm{\DIV\VDIV,T}{\utens{\tau}_T}.
  \]%% \end{equation}

  For the third term, recalling the polynomial consistency \eqref{eq:sT:polynomial.consistency} of the stabilization bilinear form, we can write, for all $\utens{\tau}_T\in\uHdivdiv{T}$,
  \[
  \begin{aligned}
    |\term_3(T)|
    &= |s_T(\IDivDiv{T}(\btens{\upsilon} - \tlproj{k}{T}\btens{\upsilon}),\utens{\tau}_T)|
    \\
    &\le\norm{\DIV\VDIV,T}{\IDivDiv{T}(\btens{\upsilon} - \tlproj{k}{T}\btens{\upsilon})}
    \norm{\DIV\VDIV,T}{\utens{\tau}_T}
    \\
    \overset{\eqref{eq:norm.equivalence}}&\lesssim
    \tnorm{\DIV\VDIV,T}{\IDivDiv{T}(\btens{\upsilon} - \tlproj{k}{T}\btens{\upsilon})}
    \norm{\DIV\VDIV,T}{\utens{\tau}_T}
    \overset{\eqref{eq:Eprod:T2(T):basic}}\lesssim
    h^{k+1}\seminorm{\btens{H}^{k+1}(T;\Matr)}{\btens{\upsilon}}\norm{\DIV\VDIV,T}{\utens{\tau}_T}.\qedhere
  \end{aligned}
  \]
\end{proof}

\begin{lemma}[Estimate of the adjoint consistency error for the divdiv operator]
  Let $v\in H^2(\Omega)$ be such that $v = \partial_{\normal} v = 0$ on $\partial\Omega$ and define the divdiv adjoint consistency error linear form $\Edivdiv(v;\cdot):\uHdivdiv{h}\to\Real$ such that, for all $\utens{\tau}_h\in\uHdivdiv{h}$,
  \begin{equation}\label{eq:Edivdiv}
    \Edivdiv(v;\utens{\tau}_h)
    \coloneq
    \int_\Omega\HESS v:\TP[h]\utens{\tau}_h
    - \int_\Omega v~\DD{h}\utens{\tau}_h.
  \end{equation}
  Then, additionally assuming $v\in H^{k+3}(\Th)$, it holds,
  \begin{equation}\label{eq:Edivdiv:estimate}
    \norm{\DIV\VDIV,h,*}{\Edivdiv(v;\cdot)}
    \lesssim h^{k+1}\seminorm{H^{k+3}(\Th)}{v}.
  \end{equation}
\end{lemma}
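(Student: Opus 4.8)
Since $\DD{h}$ and $\TP[h]$ act element by element, the plan is to split $\Edivdiv(v;\utens{\tau}_h)=\sum_{T\in\Th}\mathfrak{E}_T$ with $\mathfrak{E}_T\coloneq\int_T\HESS v:\TP\utens{\tau}_T-\int_T v\,\DD{T}\utens{\tau}_T$, to prove the local bound $|\mathfrak{E}_T|\lesssim h^{k+1}\seminorm{H^{k+3}(T)}{v}\,\norm{\DIV\VDIV,T}{\utens{\tau}_T}$ for each $T$, and to conclude with a Cauchy--Schwarz inequality over the mesh. To make $\mathfrak{E}_T$ tractable I would exploit that $\TP\utens{\tau}_T\in\tPoly{k}(T;\Symm)$ and $\HESS v$ is symmetric-valued to replace $\HESS v$ by $\tlproj{k}{T}(\HESS v)$ in the first integral, then use the direct decomposition $\tPoly{k}(T;\Symm)=\Holy{k}(T)\oplus\cHoly{k}(T)$ (see \eqref{eq:vPoly=Holy+cHoly}) to find $q_T\in\Poly{k+2}(T)$ and $\btens{\zeta}_T\in\cHoly{k}(T)$ with $\HESS q_T+\btens{\zeta}_T=\tlproj{k}{T}(\HESS v)$, fixing the $\Poly{1}(T)$-component of $q_T$ by the normalisation $\lproj{1}{T}q_T=\lproj{1}{T}v$. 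Testing the definition \eqref{eq:TP} of $\TP\utens{\tau}_T$ against $(q_T,\btens{\zeta}_T)$ and subtracting $\int_T v\,\DD{T}\utens{\tau}_T$ then gives
\[
\mathfrak{E}_T=\int_T\DD{T}\utens{\tau}_T\,(q_T-v)+\sumEFT(\nFE^\top\btens{\tau}_E\nF)\,q_T+\sumFT\gammaF\utens{\tau}_F\,\partial_{\nF}q_T-\sumFT D_{\btens{\tau},F}\,q_T+\int_T\htvec{\tau}:\btens{\zeta}_T ,
\]
which is the quantity I would estimate term by term.

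The second ingredient is that $q_T$ approximates $v$ to order $k+3$. Since $\HESS(\lproj{k+2}{T}v)\in\Holy{k}(T)$, the $\cHoly{k}(T)$-part $\btens{\zeta}_T$ of $\tlproj{k}{T}(\HESS v)$ coincides with the $\cHoly{k}(T)$-part of $\tlproj{k}{T}(\HESS v)-\HESS(\lproj{k+2}{T}v)$; using the $L^2$-boundedness (uniform over shape-regular elements) of the projectors associated with \eqref{eq:vPoly=Holy+cHoly}, together with the approximation properties of $\tlproj{k}{T}$ and $\lproj{k+2}{T}$ (e.g.\ \cite[Theorem~1.45]{Di-Pietro.Droniou:20}), this yields $\norm{\btens{L}^2(T;\Matr)}{\btens{\zeta}_T}\lesssim h_T^{k+1}\seminorm{H^{k+3}(T)}{v}$, and the identity $\HESS(q_T-v)=(\tlproj{k}{T}\HESS v-\HESS v)-\btens{\zeta}_T$ shows $\seminorm{H^2(T)}{q_T-v}$ obeys the same bound. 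Combining $\lproj{1}{T}(q_T-v)=0$ with the standard $L^2$-projector estimates gives $\norm{L^2(T)}{q_T-v}\lesssim h_T^{k+3}\seminorm{H^{k+3}(T)}{v}$ and $\seminorm{H^1(T)}{q_T-v}\lesssim h_T^{k+2}\seminorm{H^{k+3}(T)}{v}$, whence continuous trace inequalities on the faces and edges of $T$ deliver $\norm{L^2(F)}{q_T-v}\lesssim h_T^{k+5/2}\seminorm{H^{k+3}(T)}{v}$, $\norm{L^2(F)}{\partial_{\nF}(q_T-v)}\lesssim h_T^{k+3/2}\seminorm{H^{k+3}(T)}{v}$ for $F\in\FT$, and $\norm{L^2(E)}{q_T-v}\lesssim h_T^{k+2}\seminorm{H^{k+3}(T)}{v}$ for $E\in\ET$.

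With these estimates, the volumetric term $\int_T\DD{T}\utens{\tau}_T(q_T-v)$ is controlled by Cauchy--Schwarz and the boundedness \eqref{eq:DD:boundedness} of $\DD{T}$, and $\int_T\htvec{\tau}:\btens{\zeta}_T$ by Cauchy--Schwarz and $\norm{\btens{L}^2(T;\Matr)}{\htvec{\tau}}\le\tnorm{\DIV\VDIV,T}{\utens{\tau}_T}$ from \eqref{eq:component.L2.prod}; both are $\lesssim h^{k+1}\seminorm{H^{k+3}(T)}{v}\,\tnorm{\DIV\VDIV,T}{\utens{\tau}_T}$. The three boundary terms are the delicate point: because $q_T$ is element-dependent, they do not telescope directly over $\Th$. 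I would resolve this by inserting $v$, writing e.g.\ $\sumFT\gammaF\utens{\tau}_F\,\partial_{\nF}q_T=\sumFT\gammaF\utens{\tau}_F\,\partial_{\nF}(q_T-v)+\sumFT\gammaF\utens{\tau}_F\,\partial_{\nF}v$ and similarly (adding and subtracting $v$) for the edge term and for the term with $D_{\btens{\tau},F}$. Summed over $T\in\Th$ the ``$v$-parts'' vanish: $v$, $\partial_{\nF}v$, $\gammaF\utens{\tau}_F$, $D_{\btens{\tau},F}$ and $\btens{\tau}_E$ are single-valued across faces, so on each interior face the two adjacent elements contribute with opposite signs $\wTF$, while on faces lying on $\partial\Omega$ the factors $v$ and $\partial_{\nF}v$ vanish by the assumption $v=\partial_{\normal}v=0$ there. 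The remaining ``$(q_T-v)$-parts'' are estimated by Cauchy--Schwarz on faces and edges, using the trace estimates above, the boundedness \eqref{eq:gammaF:boundedness} of $\gammaF$, and the bounds $\norm{L^2(F)}{D_{\btens{\tau},F}}\lesssim h_T^{-3/2}\tnorm{\DIV\VDIV,T}{\utens{\tau}_T}$, $\norm{L^2(E)}{\btens{\tau}_E}\lesssim h_T^{-1}\tnorm{\DIV\VDIV,T}{\utens{\tau}_T}$ that follow from \eqref{eq:component.L2.prod}; each contributes $\lesssim h^{k+1}\seminorm{H^{k+3}(T)}{v}\,\tnorm{\DIV\VDIV,T}{\utens{\tau}_T}$.

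Collecting the five bounds, replacing $\tnorm{\DIV\VDIV,T}{\cdot}$ by $\norm{\DIV\VDIV,T}{\cdot}$ via the norm equivalence \eqref{eq:norm.equivalence}, and summing over $T\in\Th$ with a discrete Cauchy--Schwarz inequality yields $|\Edivdiv(v;\utens{\tau}_h)|\lesssim h^{k+1}\seminorm{H^{k+3}(\Th)}{v}\,\norm{\DIV\VDIV,h}{\utens{\tau}_h}$, i.e.\ \eqref{eq:Edivdiv:estimate}. I expect the main obstacle to be exactly the treatment of the boundary terms: one must recognise that the element-local polynomial $q_T$ should be compared against the single-valued trace data carried by the exact solution $v$, and that the clamped boundary conditions on $v$ are precisely what discards the face contributions on $\partial\Omega$ — there is no need to make $q_T$ itself globally conforming.
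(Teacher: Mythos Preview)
Your argument is correct and shares with the paper the essential mechanism: write a local identity via \eqref{eq:TP}, then insert the exact $v$ into the face and edge terms so that single-valuedness of $\gammaF\utens{\tau}_F$, $D_{\btens{\tau},F}$, $\btens{\tau}_E$ together with the clamped boundary conditions $v=\partial_{\normal}v=0$ makes the $v$-parts telescope, leaving only $(q_T-v)$-contributions that are bounded with the component norm and the approximation estimates.

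The route differs in one respect. The paper takes the simpler test function $v_T\coloneq\lproj{k+2}{T}v$ and $\btens{\tau}=\btens{0}$ in \eqref{eq:TP}; the approximation bounds for $v_T-v$ (in $L^2(T)$, $L^2(F)$, $L^2(E)$, and for $\partial_{\nF}(v_T-v)$) are then immediate consequences of the standard $L^2$-projector estimates, with no need to invoke the decomposition \eqref{eq:vPoly=Holy+cHoly}. The price is one extra volumetric term $\int_T\HESS(v-v_T):\TP\utens{\tau}_T$, trivially controlled by \eqref{eq:TP:boundedness} and the projector estimate. Your construction instead matches $\tlproj{k}{T}(\HESS v)$ exactly via $\HESS q_T+\btens{\zeta}_T$, which eliminates that Hessian-error term but requires you to \emph{derive} the approximation properties of $q_T$ (via the bound on $\btens{\zeta}_T$ and the normalisation $\lproj{1}{T}q_T=\lproj{1}{T}v$). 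Both are valid; the paper's choice is shorter because the work you put into proving that $q_T$ behaves like $\lproj{k+2}{T}v$ is precisely avoided by taking $\lproj{k+2}{T}v$ from the start.
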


\begin{proof}
  Let, for all $T\in\Th$, $v_T\coloneq\lproj{k+2}{T}v$.
  Combining \eqref{eq:TP} for $(\btens{\tau},v) = (\btens{0}, v_T)$ with the definition \eqref{eq:Edivdiv} of the adjoint consistency error, we get, for any $\utens{\tau}_h\in\uHdivdiv{h}$, such that $\norm{\DIV\VDIV,h}{\utens{\tau}_h} = 1$,
  \[
  \begin{aligned}
    \Edivdiv(v;\utens{\tau}_h)
    &= \sum_{T\in\Th}\int_T (v_T - v)~\DD{T}\utens{\tau}_T
    + \sum_{T\in\Th}\int_T \HESS(v - v_T):\TP\utens{\tau}_T
    \\
    &\quad
    + \sum_{T\in\Th}\sum_{F\in\FT}\omega_{TF}\sum_{E\in\EF}\omega_{FE}\int_E(\normal_{FE}^\top\btens{\tau}_E\normal_F)(v_T - v)
    \\
    &\quad
    + \sum_{T\in\Th}\sum_{F\in\FT}\omega_{TF}\int_F\left[
      \gammaF\utens{\tau}_F~\partial_{\normal_F}(v_T - v)
      + D_{\btens{\tau},F}~(v_T - v)
      \right],
  \end{aligned}
  \]
  where the insertion of the face or edge traces of $v$ and of $\partial_{\normal} v$ into the boundary terms is justified by their single-valuedness along with the assumed boundary conditions.
  Using Cauchy--Schwarz inequalities along with the approximation properties of $v_T$, the definition \eqref{eq:tnorm} of $\tnorm{\DIV\VDIV,T}{{\cdot}}$, 
  and the boundedness \eqref{eq:DD:boundedness} of $\DD{T}$, \eqref{eq:TP:boundedness} of $\TP$, and \eqref{eq:gammaF:boundedness} of $\gammaF$, we infer  
  \[
  \left|\Edivdiv(v;\utens{\tau}_h)\right|\lesssim h^{k+1}\seminorm{H^{k+3}(\Th)}{v}\tnorm{\DIV\VDIV,h}{\utens{\tau}_h}
  \overset{\eqref{eq:norm.equivalence}}{\lesssim} h^{k+1}\seminorm{H^{k+3}(\Th)}{v}.\qedhere
  \]
\end{proof}

\begin{proof}[Proof of Theorem \ref{thm:error.estimate}]
  Accounting for the inf-sup condition \eqref{eq:inf-sup:Ah}, by \cite[Theorem 10]{Di-Pietro.Droniou:18}, it holds
  \begin{equation}\label{eq:error.estimate:basic}
    \norm{\mathcal{Z},h}{(\utens{\sigma}_h - \IDivDiv{h}\btens{\sigma}, u - \lproj{k}{h} u)}
    \lesssim\norm{\mathcal{Z},h,*}{\Err(\btens{\sigma},u;\cdot)},
  \end{equation}
  where $\norm{\mathcal{Z},h,*}{{\cdot}}$ denotes the norm dual to $\norm{\mathcal{Z},h}{{\cdot}}$ and the consistency error linear form $\Err(\btens{\sigma},u;\cdot):\mathcal{Z}_h^k\to\Real$ is such that
  \[
  \begin{aligned}
    \Err(\btens{\sigma},u;\utens{\tau}_h,v_h)
    &\coloneq
    \int_\Omega fv_h
    - \mathcal{A}_h((\IDivDiv{h}\btens{\sigma},\lproj{k}{h}u), (\utens{\tau}_h,v_h))
    \\
    &=
    \Eprod(\btens{\sigma};\utens{\tau}_h)
    + \cancel{\int_\Omega(\DIV\VDIV\btens{\sigma} - \DD{h}\IDivDiv{T}\btens{\sigma})~v_h}
    + \Edivdiv(u;\utens{\tau}_h),
  \end{aligned}
  \]
  where, to pass to the second line, we have used the fact that $f = -\DIV\VDIV\btens{\sigma}$ almost everywhere in $\Omega$, added the term $\int_\Omega(\btens{\sigma} + \HESS u):\TP[h]\utens{\tau}_h = 0$, and used the definitions \eqref{eq:Eprod} and \eqref{eq:Edivdiv} of the $L^2$-product and adjoint divdiv consistency errors,
  while the cancellation follows from \eqref{eq:fortin}.
  To prove \eqref{eq:error.estimate} it suffices to use \eqref{eq:Eprod:estimate} and \eqref{eq:Edivdiv:estimate} to estimate the terms in the right-hand side of the above expression and plugging the resulting bound into \eqref{eq:error.estimate:basic} after observing that $\norm{\DIV\VDIV,h}{\utens{\tau}_h}\le \norm{\mathcal{Z},h}{(\utens{\tau}_h, v_h)}$ by definition of this latter norm.
\end{proof}

%------------------------------------------------------------------------------%

\section{Local complex property}\label{sec:complex.property}

We collect in this section the proofs of the complex properties \eqref{eq:LC.DGSC} and \eqref{eq:LC.SCDD}.

\subsection{Proof of \eqref{eq:LC.DGSC}}

Let $\uvec{v}_T \in \uHdevgrad{T}$. We need to prove that the edge, face, and element components of $\SC{T}\DG{T}\uvec{v}_T$ (obtained plugging \eqref{eq:defuDG} into \eqref{eq:defuSC}) vanish.

\subsubsection{Edge components}

Given $E \in \ET$, and letting, for the sake of brevity, 
$\Gamma_{\tangent,E}\coloneq \DerE{k}(\bvec{v}_{V_1} \cdot \tE, \bvec{v}_{V_2} \cdot \tE, \doftE{v})$, 
we have that
\begin{align*}
  \SCE \DG{E} \uvec{v}_E
  &=
  \Ctensor \bigg[
    \begin{aligned}[t]
      &\DerE{k+1}\big(\trnnE{({\gdofV[V_1]{v}})},\trnnE{({\gdofV[V_2]{v}})}, \gdofE{v}\big)
      \\
      &- \DerE{k+1}\bigg(
      \trnnE{(\DEV {\gdofV[V_1]{v}})},
      \trnnE{(\DEV {\gdofV[V_2]{v}})},
      \gdofE{v} 
      - \frac13\left(
      \tr \gdofE{v} + \Gamma_{\tangent,E}
      \right) \ID{2}
      \bigg)
      \bigg]
    \end{aligned}
    \\
    &\eqcolon\Ctensor\left[\term_1 - \term_2\right].%
\end{align*}
Since $\DerE{k+1}$ acts component-wise and the off-diagonal entries of its arguments in $\term_1$ and $\term_2$ coincide, the off-diagonal entries of these terms coincide as well, showing that there exists $\lambda\in\Poly{k+1}(E)$ such that $\term_1 - \term_2 = \lambda\ID{2}$.
Hence,
\[
\SCE \DG{E} \uvec{v}_E
= \Ctensor(\lambda\ID{2}) = \btens{0},
\]
where the conclusion follows from the definition \eqref{eq:defCtensor} of $\Ctensor$.

\subsubsection{Face components}

Let now $F \in \FT$.
For all $r \in \Poly{k+1}(F)$,
using the definition \eqref{eq:defSCnnF} of $\SCnnF$ with $\utens{\tau}_F = \DG{F}\uvec{v}_F$,
invoking the definition \eqref{eq:defDGntF} of $\DGntF$ 
with test function $\bvec{w} = \CURLF r\in\Roly{k}(F)\subset\RT{k+1}(F)$ (cf. \eqref{eq:RT}),
and noticing that $\CURLF r \cdot \nFE = - \partial_{\tE} r$ on every edge $E\in\EF$, we have
\[
\begin{aligned}
  \int_F \SCnnF \DG{F} \uvec{v}_F \,r
  &=
  \int_F \DGntF \uvec{v}_F \cdot \CURLF r
  - \!\!\sum_{E \in \EF} \wFE \int_E
  \DerE{k+1}(\trnE{(\bvec{v}_{V_1})},\trnE{(\bvec{v}_{V_2})}, \dofnE{v})\cdot\nF
  \, r\\
  \overset{\eqref{eq:defDGntF}}&=
  -\int_F v_{\normal,F}\, \cancel{\DIVF \CURLF r}
  - \sum_{E \in \EF} 
  \wFE \int_E 
  (\dofnE{v} \cdot \nF) \,\partial_{\tE} r \\
  &\quad
  -\sumEF
  \DerE{k+1}(\trnE{(\bvec{v}_{V_1})},\trnE{(\bvec{v}_{V_2})}, \dofnE{v})\cdot \nF
  \, r
  \\
  \overset{\eqref{eq:IPP.E}}&=
  -\sum_{E\in\EF}\wFE\jump{E}{(\bvec{v}_{V} \cdot \nF) r}
  = 0,
\end{aligned}
\]
where 
we have concluded observing that, for any family $(\varphi_V)_{V \in \VF} \in \Real^{\VF}$,
\begin{equation}\label{eq:zero-jump}
  \sum_{E\in\EF}\wFE\jump{E}{\varphi_V} = 0
\end{equation}

Writing the definition \eqref{eq:defSCddofF} of $\SCddofF$ for $\uvec{\tau}_F = \DG{F} \uvec{v}_F$, we get, for all $r \in \Poly{k+1}(F)$,
\begin{align*}
  &\term\coloneq\int_F \SCddofF \DG{F} \uvec{v}_F\, r
  \\
  &\quad= 
  - \int_F\DGttF \uvec{v}_F \tdot \CURLF \GRADF r
  %%   \\
  %%   &\qquad
  + \sumEF
  \DerE{k+1}(\trnE{(\bvec{v}_{V_1})},\trnE{(\bvec{v}_{V_2})}, \dofnE{v}) \cdot \nFE\,\partial_{\nFE} r
  \\
  &\qquad
  - \sumEF (2 \nFE^\top \gdofE{v} \nFE + \nF^\top \gdofE{v} \nF)\,\partial_{\tE}r
  \\
  &\qquad
  + \sumEF \cancelto{1}{\frac{2 \nFE^\top \ID{2} \nFE + \nF^\top \ID{2} \nF}{3}}
  \left[\tr \gdofE{v} +
    \DerE{k}(\bvec{v}_{V_1} \cdot \tE, \bvec{v}_{V_2} \cdot \tE, \doftE{v})
    \right]\,\partial_{\tE}r
  \\
  &\qquad
  - \sumEF
  \nFE^\top\DerE{k+1}(\trnnE{({\gdofV[V_1]{v}})},\trnnE{({\gdofV[V_2]{v}})}, \gdofE{v})\nFE\, r
  \\
  &\qquad
  + 
  \sum_{E\in\EF}\wFE\jump{E}{(\nFE^\top \DEV \gdofV{v} \nFE)\,r},
\end{align*}
where we have used the fact
that both $\normal_{FE}$ and $\normal_F$ have unit Euclidian norm for the fourth term.

We next expand, in the above expression, $\DGttF \uvec{v}_F$ according to \eqref{eq:defDGttF} with $\btens{\sigma} = \CURLF \GRADF r\in \CGoly{k-1}(F)\subset\CGtrimPoly{k}(F)$ to go on writing
\begin{align*}
  \term %% \int_F \SCddofF \DG{F} \uvec{v}_F\, r
  &= \int_F \doftF{v} \cdot \cancel{\VDIVF \CURLF \GRADF r}
  + \frac13 \cancel{\int_F \gdofF{v} \tr(\CURLF \GRADF r)}
  \\
  &\quad
  + \sumEF \doftE{v} \, \partial_{\tE}^2 r
  + \sumEF (\dofnE{v} \cdot \nFE)\, \partial_{\tE}\partial_{\nFE} r
  \\ 
  &\quad + \sumEF\left(
  -\bcancel{2} \nFE^\top \gdofE{v} \nFE
  - \bcancel{\nF^\top \gdofE{v} \nF}
  + \bcancel{\tr \gdofE{v}}
  \right)\,\partial_{\tE}r
  \\
  &\quad
  + \sumEF\DerE{k}(\bvec{v}_{V_1} \cdot \tE, \bvec{v}_{V_2} \cdot \tE, \doftE{v})\, \partial_{\tangent_E}r
  \\
  &\quad
  + \sumEF \DerE{k+1}(\trnE{(\bvec{v}_{V_1})},\trnE{(\bvec{v}_{V_2})}, \dofnE{v})\cdot \nFE\,\partial_{\nFE} r
  \\
  &\quad
  -\sumEF\nFE^\top \DerE{k+1}(\trnnE{({\gdofV[V_1]{v}})},\trnnE{({\gdofV[V_2]{v}})}, \gdofE{v})\nFE\, r
  \\
  &\quad
  + \sum_{E\in\EF}\wFE\jump{E}{(\nFE^\top \gdofV{v} \nFE)\,r}
  - \frac13 \cancel{\sum_{E\in\EF}\wFE\jump{E}{r\, \tr \gdofV{v} \nFE^\top \nFE}},
\end{align*}
where we have used $\VDIVF \CURLF = \bvec{0}$ in the first cancellation,
the fact that $\tr(\CURLF \GRADF) = \ROTF \GRADF = 0$ to cancel the second term,
noticed that, for all $E \in\EF$,
$\tE^\top (\CURLF \GRADF r) \nFE = - \partial_{\tE}^2 r$
and $\nFE^\top (\CURLF \GRADF r) \nFE = - \partial_{\tE}\partial_{\nFE} r$
in the third and fourth terms,
and observed that, for all $E\in\EF$, $\normal_{FE}^\top\gdofE{v} \nFE + \nF^\top \gdofE{v} \nF = \tr \gdofE{v}$ since $(\normal_{FE},\normal_F)$ is an orthonormal basis in the fifth term.
The cancellation of the last term follows noticing that $\normal_{FE}^\top\normal_{FE} = 1$ and invoking \eqref{eq:zero-jump}.

Using the definition \eqref{eq:IPP.E} of $\DerE{\bullet}$ on the terms where this operator appears, we then get

\begin{align*}
  \term%% \int_F \SCddofF \DG{F} \uvec{v}_F\, r
  &=
  \sumEF \doftE{v}\, \partial_{\tE}^2 r
  + \sumEF (\dofnE{v}\cdot\nFE)\, \partial_{\tE}\partial_{\nFE} r
  \\
  &\quad
  - \sumEF (\nFE^\top\gdofE{v}\nFE)\, \partial_{\tE} r
  \\
  &\quad
  - \sumEF \doftE{v}\, \partial_{\tE}^2 r
  + \sum_{E\in\EF}\wFE \jump{E}{(\bvec{v}_V \cdot \tE)\, \partial_{\tE} r}
  \\
  &\quad
  - \sumEF (\dofnE{v} \cdot\nFE)\, \partial_{\tE}\partial_{\nFE} r
  + \sum_{E\in\EF}\wFE \jump{E}{(\bvec{v}_V \cdot \nFE) \, \partial_{\nFE}r}
  \\
  &\quad
  + \sumEF (\nFE^\top \gdofE{v} \nFE)\, \partial_{\tE} r
  = \sum_{E \in \EF} \wFE \jump{E}{\trtF{(\bvec{v}_V)} \cdot \GRADF r}
  \overset{\eqref{eq:zero-jump}}{=} 0,
\end{align*}
where the penultimate equality follows simplifying the terms involving integrals over $E$
and gathering together the edge jump terms after observing that
$(\bvec{v}_V \cdot \nFE)\, \partial_{\nFE} r + (\bvec{v}_V \cdot\tE)\, \partial_{\tE}r = \trtF{(\bvec{v}_V)} \cdot \GRADF r$.

\subsubsection{Element component}

To conclude the proof of \eqref{eq:LC.DGSC}, we it remains to show that the element component of $\SC{T}\DG{T}\uvec{v}_T$ vanishes.
Writing the definition \eqref{eq:defSCT} of $\SCT$ for $\utens{\tau}_T = \DG{T}\uvec{v}_T$, we get, for all $\btens{\sigma}\in\HtrimPoly{k}(T)$,
\begin{align*}
  \term\coloneq
  \int_T \SCT\DG{T}\uvec{v}_T \tdot \btens{\sigma}
  &=
  \int_T \DGT\uvec{v}_T \tdot \CURL \btens{\sigma}
  + \sum_{F \in \FT} \wTF \int_F \DGttF\uvec{v}_F \tdot \trttF{(\btens{\sigma} \times \nF)}\\
  &\quad + \sumFT \DGntF\uvec{v}_F \cdot \trntF{(\btens{\sigma}\times\nF)}.
\end{align*}
Next, we expand $\DGT\uvec{v}_T$ according to \eqref{eq:defDGT} (which is possible since $\CURL \btens{\sigma} \in \SRoly{k-1}(T)\subset\SRtrimPoly{k}(T)$, cf. \eqref{eq:SRtrimPoly}),
$\DGttF\uvec{v}_F$ according to \eqref{eq:defDGttF} (after noticing that, by \eqref{eq:poly.trace.H}, $\trttF{(\btens{\sigma}\times\nF)} \in \CGtrimPoly{k}(F)$),
and $\DGntF\uvec{v}_F$ according to \eqref{eq:defDGntF} (possible since $\trntF{(\btens{\sigma}\times\nF)} \in \tPoly{k}(F;\Real^2) \subset \RT{k+1}(F)$).
This gives
\begin{align*}
  \term%% \int_T \SCT\DG{T}\uvec{v}_T \tdot \btens{\sigma}
  &=
  - \int_T \bvec{v}_T \cdot \cancel{\VDIV \CURL \btens{\sigma}} 
  + \sumFT \left(
  \dofnF{v} \nF^\top \CURL \btens{\sigma} \nF + \hdoftF{v}^\top \CURL\btens{\sigma} \nF
  \right)
  \\
  &\quad
  - \sumFT \doftF{v} \cdot \VDIVF\trttF{(\btens{\sigma} \times \nF)}
  - \frac13\sumFT \gdofF{v} \cancel{\ID{2} \tdot \trttF{(\btens{\sigma} \times \nF)}}
  \\
  &\quad
  + \sumEFT 
  (\dofnE{v} \cdot \nFE)\,
  \nFE^\top\trttF{(\btens{\sigma} \times \nF)}\nFE \\
  &\quad
  + \sumEFT 
  \doftE{v}\,
  \tE^\top\trttF{(\btens{\sigma} \times \nF)}\nFE\\
  &\quad
  - \sumFT \dofnF{v} \DIVF \trntF{(\btens{\sigma}\times\nF)}\\
  &\quad
  + \sumEFT
  (\dofnE{v} \cdot \nF)
  (\trntF{(\btens{\sigma}\times\nF)}\cdot\nFE),
\end{align*}
where we have cancelled the first term using the identity $\VDIV \CURL \btens{\sigma} = 0$
and the sixth term using the fact that $\btens{\sigma}$ is symmetric, hence $\trttF{(\btens{\sigma} \times \nF)}$ is traceless.
Noticing that, by \eqref{eq:injection.3d}, $\hdoftF{v}^\top \CURL\btens{\sigma} \nF = \doftF{v} \cdot (\CURL \btens{\sigma})_{\tangent\normal,F}$, 
and rearranging the terms, we can go on writing
\[
\begin{aligned}
  \term%% \int_T \SCT\DG{T}\uvec{v}_T \tdot \btens{\sigma}
  &=
  \sumFT \dofnF{v}\, \cancel{\left[
      \nF^\top \CURL \btens{\sigma} \nF - \DIVF \trntF{(\btens{\sigma}\times\nF)}
      \right]
  }
  \\
  &\quad
  + \sumFT \doftF{v} \cdot \bcancel{\left[
      (\CURL \btens{\sigma})_{\tangent\normal,F} - \VDIVF\trttF{(\btens{\sigma} \times \nF)}
      \right]}
  \\
  &\quad
  + \sumEFT ( \dofnE{v} \cdot \nFE)\, \nFE^\top\trttF{(\btens{\sigma} \times \nF)}\nFE
  \\
  &\quad
  + \sumEFT \doftE{v} \,\tE^\top\trttF{(\btens{\sigma} \times \nF)}\nFE
  \\
  &\quad
  + \sumEFT ( \dofnE{v} \cdot \nF)(\trntF{(\btens{\sigma}\times\nF)}\cdot\nFE),
\end{aligned}
\]
where the first two terms cancel thanks to the identities
$\nF^\top \CURL \btens{\sigma} \nF = \DIVF \trntF{(\btens{\sigma}\times\nF)}$ and
$(\CURL \btens{\sigma})_{\tangent\normal,F} = \VDIVF\trttF{(\btens{\sigma} \times \nF)}$, respectively.
Gathering together the terms involving integrals over edges, and 
  using \eqref{eq:injection.3d} with $\bvec{u}_{\bvec{w}} = \dofnE{v}$ and $\bvec{w} = \{\normal_{FE}, \normal_F\}$,
we then have
\[
\begin{aligned}
  \term %% \int_T \SCT\DG{T}\uvec{v}_T \tdot \btens{\sigma}
  &\quad
  = \sumEFT\left[
    \hdofnE{v}^\top(\btens{\sigma} \times \nF)\nFE 
    + \doftE{v} \,\tE^\top(\btens{\sigma} \times \nF)\nFE
    \right]
  \\
  &\quad
  = \sumEFT (\hdofnE{v} + \doftE{v}\, \tE)^\top \btens{\sigma} (\nF \times \nFE)
  \\
  &\quad
  = - \sumEFT (\hdofnE{v} + \doftE{v}\, \tE)^\top \btens{\sigma} \tE
  = 0,
\end{aligned}
\]
where we have used the vector triple product formula to pass to the second line
and the fact that $(\tE, \nFE, \nF)$ forms a right-handed system (i.e., $\nF \times \nFE = - \tE$) to pass to the third line.
Finally, the conclusion follows observing that, for any family of functions $(\phi_E)_{E\in\ET}$ such that $\phi_E\in L^2(E)$ for all $E\in\ET$,
\begin{equation} \label{eq:zero-edges}
  \sumEFT \phi_E = 0.
\end{equation}
This completes the proof of \eqref{eq:LC.DGSC}

\subsection{Proof of \eqref{eq:LC.SCDD}}

Let $\utens{\tau}_T \in \uHsymcurl{T}$.
We need to show that $\DD{T}\SC{T}\utens{\tau}_T = 0$.
Using the definition \eqref{eq:DT} of $\DD{T}$ with $\utens{\upsilon}_T = \SC{T} \utens{\tau}_T$, 
we have, for all $v \in \Poly{k}(T)$,
\begin{multline*}
  \term\coloneq
  \int_T \DD{T} \SC{T} \utens{\tau}_T \, v
  =
  \int_T \SCT\utens{\tau}_T:\HESS v
  - \sum_{F\in\FT}\omega_{TF}\sum_{E\in\EF}\omega_{FE}\int_E(\normal_{FE}^\top\SCE\utens{\tau}_E\normal_F)~v
  \\
  - \sum_{F\in\FT}\omega_{TF}\int_F\SCnnF\utens{\tau}_F~\partial_{\normal_F}v
  - \sum_{F\in\FT}\omega_{TF}\int_F\SCddofF\utens{\tau}_F~v
  \eqcolon\mathfrak{A} + \mathfrak{B} + \mathfrak{C} + \mathfrak{D}.
\end{multline*}
Next, we expand $\SCT$ according to \eqref{eq:defSCT} with $\btens{\sigma} = \HESS v\in\Holy{k-2}(T)\subset\HtrimPoly{k}(T)$ (cf. \eqref{eq:HtrimPoly}), 
and $\SCE$, $\SCnnF$, and $\SCddofF$ according to \eqref{eq:defSCE}, \eqref{eq:defSCnnF}, and \eqref{eq:defSCddofF}, respectively, to write  
\begin{align*} 
  %% &\int_T \DD{T} \SC{T} \utens{\tau} \, \corr{}{v}{}
  %%   \\
  %%   &\quad  
  &\left.\begin{aligned}
    \term & = \int_T \srvec{\tau} \tdot \cancel{\CURL \HESS v}
    \\
      {} &\quad
      + \sumFT\cgtvec[F]{\tau} \tdot \trttF{(\HESS v \times \nF)}
      + \sumFT\rttvec[F]{\tau} \cdot \trntF{(\HESS v\times\nF)}
  \end{aligned}
  \right\}\mathfrak{A}
  \\
  &\quad
  \left.\begin{aligned}
    &\quad - \sumEFT (\normal_{FE}^\top\Ctensor\cdofE{\tau}\normal_F)\,v\\
    &\quad + \sumEFT \left(
    \normal_{FE}^\top\Ctensor\DerE{k+1}(\trnnE{(\btens{\tau}_{V_1})},\trnnE{(\btens{\tau}_{V_2})}, \btens{\tau}_E)\normal_F
    \right)\,v
  \end{aligned}\right\}\mathfrak{B}
  \\
  &\quad  
  \left.\begin{aligned}
    &\quad - \sumFT \rttvec[F]{\tau} \cdot \CURLF \partial_{\nF} v
    + \sumEFT (\doftE{\tau} \cdot \nF)\, \partial_{\nF} v
  \end{aligned}
  \right\}\mathfrak{C}
  \\
  &\quad
  \left.\begin{aligned}
    &\quad
    - \sumFT \cgtvec[F]{\tau} \tdot \CURLF \GRADF v + \sumEFT (\doftE{\tau} \cdot \nFE)\,\partial_{\nFE} v
    \\
    &\quad
    - \sumEFT \left ( 2 \nFE^\top \btens{\tau}_E \nFE + \nF^\top \btens{\tau}_E \nF \right )\, \partial_{\tE} v
    \\
    &\quad
    - \sumEFT \nFE^\top \cdofE{\tau} \, \nFE\, v
    + \sum_{F\in\FT}\wTF\sum_{E\in\EF}\wFE \jump{E}{v\, \nFE^\top \, \btens{\tau}_V \, \nFE}
  \end{aligned}\right\}\mathfrak{D}
  \\
  &\quad\eqcolon 0 + \term_1 + \cdots + \term_{11}.
\end{align*}
We infer $\term_{1} = - \term_{7}$ and $\term_{2} = - \term_{5}$ from the identities
$\trttF{(\HESS v \times \nF)} = \CURLF\GRADF v$ and 
$\nF^\top(\HESS v \times \nF) = \CURLF \partial_{\nF} v$.
It remains to shows that the edge terms cancel.
From the definition \eqref{eq:defCtensor} of $\Ctensor$, we get
, for any $\btens{\eta}\in\Real^{2\times 2}$,
$\nFE^\top \Ctensor \btens{\eta} \nF
= -\frac{1}{2} \nFE^\top \btens{\eta} \nFE
+ \frac{1}{2} \nF^\top \btens{\eta} \nF$, hence
\begin{equation}\label{eq:decomp.Ceta}
  \nFE^\top \Ctensor \btens{\eta} \nF
  + \nFE^\top \btens{\eta} \nFE
  = \frac{1}{2} \tr \btens{\eta}.
\end{equation}
Applying the above equation to $\btens{\eta} = \cdofE{\tau}$ and using \eqref{eq:zero-edges} with $\phi_E = \frac12\tr\cdofE{\tau}$, we infer $\term_{3} + \term_{10} = 0$.
We can then merge some derivatives on edges. Since
$
(\doftE{\tau} \cdot \nF)\, \partial_{\nF} v +
(\doftE{\tau} \cdot \nFE)\, \partial_{\nFE} v =
\doftE{\tau} \cdot \trnE{(\GRAD v)},
$
we also have $\term_{6} + \term_{8} = 0$ from \eqref{eq:zero-edges}.
Letting $\term_9(E)\coloneq\int_E\left ( 2 \nFE^\top \btens{\tau}_E \nFE + \nF^\top \btens{\tau}_E \nF \right )\, \partial_{\tE} v$ and observing that the quantity in parenthesis is equal to $\tr\btens{\tau}_E\ID{2} + \nFE^\top\btens{\tau}_E\nFE$ since $(\nF,\nFE)$ form an orthonormal basis and the trace is an invariant, we get
\begin{align*}
  \term_9(E)
  &= \int_E \tr \btens{\tau}_E\,\partial_{\tE}v
  +\int_E \nFE^\top \btens{\tau}_E \nFE\,\partial_{\tE} v \\
  \overset{\eqref{eq:IPP.E}}&=
  \int_E \tr \btens{\tau}_E\,\partial_{\tE} v
  - \int_E \nFE^\top
  \DerE{k+1}(\trnnE{(\btens{\tau}_{V_1})},\trnnE{(\btens{\tau}_{V_2})}, \btens{\tau}_E)\nFE~v
  + \jump{E}{\nFE^\top \, \btens{\tau}_V \, \nFE\,v} \\
  \overset{\eqref{eq:decomp.Ceta}}&=
  \int_E \tr \btens{\tau}_E\,\partial_{\tE} v
  - \int_E \frac12\tr\DerE{k+1}(\trnnE{(\btens{\tau}_{V_1})},\trnnE{(\btens{\tau}_{V_2})}, \btens{\tau}_E)~v\\
  &\quad
  + \int_E \nFE^\top\Ctensor\DerE{k+1}(\trnnE{(\btens{\tau}_{V_1})},\trnnE{(\btens{\tau}_{V_2})}, \btens{\tau}_E)\nFE~v
  + \jump{E}{\nFE^\top \, \btens{\tau}_V \, \nFE\,v}.
\end{align*}
Therefore,
\begin{align*}
  \term_{9} + \term_{4} + \term_{11} 
  =& \sum_{F\in\FT}\wTF \sumEF\left(
  \frac12\tr\DerE{k+1}(\trnnE{(\btens{\tau}_{V_1})},\trnnE{(\btens{\tau}_{V_2})}, \btens{\tau}_E)~v
  -  \tr \btens{\tau}_E\,\partial_{\tE} v
  \right).
\end{align*}
Using \eqref{eq:zero-edges} with $\phi_E$ equal to the integrand in the above expression readily yields $\term_{4} + \term_{9} + \term_{11} = 0$,
thus showing that $\term = 0$ and therefore concluding the proof.

%------------------------------------------------------------------------------%

\section{Local exactness}\label{sec:exactness}

This section contains the proof of the relations \eqref{eq:exactness}, yielding the exactness of the local complex \eqref{eq:ddr.complex}.

\subsection{Proof of \eqref{eq:exact.RTDG}}

Having already proved \eqref{eq:IGrad.RT.subset.DGT}, we only need to show that
\begin{equation}\label{eq:Ker.DGT.subset.IDGrad.RT1}
  \Ker \DG{T}\subset \IDGrad{T} \RT{1}(T).
\end{equation}
To this purpose, we let $\uvec{v}_T \in \uHdevgrad{T}$ be such that $\DG{T} \uvec{v}_T = \uvec{0}$ and show the existence of $\bvec{w}\in\RT{1}(T)$ such that $\uvec{v}_T = \IDGrad{h}\bvec{w}$.
We start from the vertex and edge components, 
  which provide the expression for $\bvec{w}$, then show that the face and element components are also equal to the interpolate of $\bvec{w}$.

\subsubsection{Vertex and edge components}

Given an edge $E\in\ET$ with vertices $V_1$ and $V_2$, enforcing $\DG{E} \uvec{v}_E = \uvec{0}$, corresponds to the following conditions (cf. \eqref{eq:defuDG}):
\begin{subequations}\label{eq:DGE=0}
  \begin{alignat}{2}\label{eq:DGE=0:1}
    \DEV \gdofV[V_2]{v}
    =  \DEV \gdofV[V_1]{v}
    &= \btens{0},
    \\ \label{eq:DGE=0:2}
    \DerE{k+1}(\trnnE{(\gdofV[V_1]{v})},\trnnE{(\gdofV[V_2]{v})}, \gdofE{v}) &= \btens{0},
    \\ \label{eq:DGE=0:3}
    \gdofE{v} - \frac13\left(
    \tr \gdofE{v} +
    \DerE{k}(\bvec{v}_{V_1} \cdot \tE, \bvec{v}_{V_2} \cdot \tE, \doftE{v})
    \right)\ID{2}
    &= \btens{0},
    \\ \label{eq:DGE=0:4}            
    \DerE{k+1}(\trnE{(\bvec{v}_{V_1})},\trnE{(\bvec{v}_{V_2})}, \dofnE{v}) &= 0.
  \end{alignat}
\end{subequations}
From \eqref{eq:DGE=0:1} we infer, for all $V\in\VE$, the existence of $\lambda_V\in\Real$ such that $\gdofV{v} = \lambda_V \ID{3}$.
Condition \eqref{eq:DGE=0:2} then gives $\lambda_{V_1}\ID{2} = \lambda_{V_2}\ID{2} =  \gdofE{v}$ which implies, in particular, $\lambda_{V_1} = \lambda_{V_2}$.
Since this reasoning applies to all edges $E\in\ET$, this yields the existence of $\lambda\in\Real$ such that $\lambda_V = \lambda$ for all $V\in\VT$ and $\gdofE{v} = \lambda \ID{2}$ for all $E\in\ET$.
Substituting this value of $\gdofE{v}$ in \eqref{eq:DGE=0:3} results in $\DerE{k+1}(\bvec{v}_{V_1} \cdot \tE, \bvec{v}_{V_2} \cdot \tE, \doftE{v}) = \lambda$, which gives, accounting for \eqref{eq:IPP.E}, $(\bvec{v}_{V_2} - \bvec{v}_{V_1}) \cdot \tE = \lambda h_E$ and
$\doftE{v}(\bvec{x}) = \lproj{k-1}{E}\big[\bvec{v}_{V_1} + \lambda (\bvec{x} - \bvec{v}_{V_1})\big]\cdot\tangent_E$ 
for all $\bvec{x}\in E$.
Condition \eqref{eq:DGE=0:4}, on the other hand, gives $\trnE{(\bvec{v}_{V_1})} = \trnE{(\bvec{v}_{V_2})} = \dofnE{v}$.
Combining the above results on the tangential and normal components of the vertex values $\bvec{v}_V$ yields
\[
\bvec{v}_{V_2}
= \bvec{v}_{V_1} + \lambda h_E \tE
= \bvec{v}_{V_1} + \lambda\, 
  \left[
    (\bvec{v}_{V_2} - \bvec{v}_{V_1})\cdot\tE
    \right]\tE
\qquad\forall E\in\ET.
\]
The only possibility for this condition to hold is that there exists $\RT{1}(T)\ni\bvec{w}:T\ni\bvec{x}\mapsto\bvec{a} + \lambda(\bvec{x} - \bvec{x}_T)\in\Real^3$ with $\bvec{a}\in\Real^3$ 
such that $\bvec{v}_V = \bvec{w}(\bvec{x}_V)$ for a given vertex $V\in\VT$ 
(which is sufficient for $\bvec{v}_{V'} = \bvec{w}(\bvec{x}_{V'})$ to hold also for all $V'\in\VT\setminus\{V\}$).
We can easily check, recalling the definition of the interpolator on $\uHdevgrad{E}$ (which corresponds to the restriction to $E$ of \eqref{eq:IDGrad}), that the above conditions on the components of $\uvec{v}_E$ amount to
\begin{equation} \label{eq:exact.RTDG.proof1}
  \uvec{v}_E = \IDGrad{E} \bvec{w}\qquad\forall E \in \ET.
\end{equation}

\subsubsection{Face components}

Let now $F\in\FT$. 
Enforcing $\DG{F}\uvec{v}_F = \uvec{0}$ amounts to the following conditions, in addition to \eqref{eq:DGE=0}:
\[
\text{
  $\DGntF\uvec{v}_F = \bvec{0}$\quad and\quad
  $\DGttF\uvec{v}_F = \btens{0}$.
}
\]
Enforcing $\DGntF\uvec{v}_F = \bvec{0}$ in \eqref{eq:defDGntF} written for $\bvec{w} = \bvec{z}\in\vPoly{k+1}(F;\Real^2)$, and accounting for \eqref{eq:exact.RTDG.proof1} gives,
\[
- \int_F \dofnF{v} \DIVF \bvec{z}
+ \sum_{E \in \EF} \wFE \int_E (\bvec{w} \cdot \nF)(\bvec{z}\cdot\nFE)
= 0\qquad\forall \bvec{z} \in \vPoly{k+1}(F;\Real^2).
\]
Integrating by parts the boundary terms and noticing that $\GRADF(\bvec{w} \cdot \nF) = \bvec{0}$ since the function 
$T\ni\bvec{x}\mapsto \bvec{w}(\bvec{x})\cdot\nF\in\Real$
is constant on $F$ (see \cite[Proposition~8]{Di-Pietro.Droniou:23} for a proof of this result on general meshes), 
the above condition translates to
$\int_F ( \bvec{w}\cdot \nF - \dofnF{v} ) \DIVF \bvec{z} = 0$
for all $\bvec{z} \in \vPoly{k+1}(F;\Real^2)$.
Since $\DIVF : \vPoly{k+1}(F;\Real^2) \rightarrow \Poly{k}(F)$ is onto, this implies
\begin{equation}\label{eq:exactness.dofnF.v}
  \dofnF{v} = \bvec{w}_{\vert F}\cdot\nF.
\end{equation}

Enforcing then $\DGttF\uvec{v}_F = \btens{0}$ in \eqref{eq:defDGttF}, 
removing projectors according to their respective definition,
and using the integration by parts formula \eqref{eq:IPP.DG.Ftt}, we get, 
for all $\btens{\sigma} \in \CGtrimPoly{k}(F)$,
\[
\int_F ( \trtF{\bvec{w}} - \doftF{v}) \cdot \VDIVF\btens{\sigma}
+ \int_F \left(\lambda - \frac13\gdofF{v}\right) \tr \btens{\sigma}
= 0.
\]
Taking $\btens{\sigma} \in \CGoly{\compl,k}(F)$ and using the fact that $\VDIVF : \CGoly{\compl,k}(F) \rightarrow \vPoly{k-1}(F;\Real^2)$ is onto (cf. Lemma \ref{lemma:DIVFCG}) along with $\Tr \btens{\sigma} = 0$, this condition yields
\begin{equation}\label{eq:exactness.doftF.v}
  \doftF{v} = \vlproj{k-1}{F}\trtF{\bvec{w}}.
\end{equation}
Taking $\btens{\sigma} \in \CGoly{k-1}(F)$, using the fact that $\VDIVF \btens{\sigma} = 0$ and that $\Tr \CGoly{k-1}(F) \rightarrow \Poly{k-1}(F)$ is onto, we have, on the other hand
\begin{equation}\label{eq:exactness.gdofF.v}
  \gdofF{v} = 3 \lambda.
\end{equation}
Gathering \eqref{eq:exact.RTDG.proof1}, \eqref{eq:exactness.dofnF.v}, \eqref{eq:exactness.doftF.v}, and \eqref{eq:exactness.gdofF.v}, and recalling that the above reasoning holds for any $F\in\FT$, we have thus proved that
\begin{equation} \label{eq:exact.RTDG.proof2}
  \uvec{v}_F = \IDGrad{F} \bvec{w}\qquad\forall F\in\FT.
\end{equation}  

\subsubsection{Element component}

To conclude the proof of \eqref{eq:Ker.DGT.subset.IDGrad.RT1}, it only remains to show that $\DGT\uvec{v}_T = \btens{0}$ implies
\begin{equation}\label{eq:exactness.dofT.v}
  \bvec{v}_T = \vlproj{k-1}{T}\bvec{w}.
\end{equation}
This relation reduces to the trivial identity $\bvec{0} = \bvec{0}$ for $k=0$.
Let us then consider the case $k\ge 1$.
Enforcing $\DGT\uvec{v}_T = \btens{0}$ in \eqref{eq:defDGT} and accounting for \eqref{eq:exact.RTDG.proof2}, \eqref{eq:defDGT} gives, for all $\btens{\sigma} \in \SRoly{\compl,k}(T) \subset \SRtrimPoly{k}(T)$,
\[
0 =
-\int_T \bvec{v}_T \cdot \VDIV\btens{\sigma}
+ \sum_{F \in \FT} \wTF 
\int_F\left(
(\bvec{w}\cdot\nF)\, \nF^\top \btens{\sigma} \nF  + \cancel{\vlproj{k-1}{F}}\hdoftF{w}^\top \btens{\sigma} \nF
\right)
=
\int_T (\bvec{w} - \bvec{v}_T) \cdot \VDIV\btens{\sigma},
\]
where we have used \eqref{eq:poly.trace.SR} to remove the projector and the integration by parts formula \eqref{eq:IPP.DG.T} (after noticing that $\btens{\sigma} \in \SRoly{\compl,k}(T)$ is traceless) to conclude.
Since $\VDIV : \SRoly{\compl,k}(T) \rightarrow \vPoly{k-1}(T;\Real^3)$ is onto, this relation implies \eqref{eq:exactness.dofT.v}, thus concluding the proof of \eqref{eq:Ker.DGT.subset.IDGrad.RT1}.

\subsection{Proof of \eqref{eq:exact.DGSC}}

Let $\uvec{\tau}_T \in \uHsymcurl{T}$ be such that $\SC{T} \uvec{\tau}_T = \uvec{0}$, i.e., recalling \eqref{eq:defSCE} and \eqref{eq:defuSC}:
\begin{alignat}{2}\label{eq:SCT=0:1} 
  \Ctensor\left(
  \cdofE{\tau} -
  \DerE{k+1}(\trnnE{(\btens{\tau}_{V_1})},\trnnE{(\btens{\tau}_{V_2})}, \btens{\tau}_E)
  \right) &= \btens{0}
  &\qquad& \forall E \in \ET,\\ \label{eq:SCT=0:2}
  \SCddofF \utens{\tau}_F &= 0
  &\qquad& \forall F \in \FT, \\ \label{eq:SCT=0:3}
  \SCnnF \utens{\tau}_F &= 0
  &\qquad& \forall F\in\FT, \\ \label{eq:SCT=0:4}
  \SCT \utens{\tau}_T &= \btens{0} .
\end{alignat}
In order to show that $\Ker \SC{T} \subset \Image \DG{T}$, starting from the above conditions we will explicitly construct $\uvec{v}_T \in \uHdevgrad{T}$ such that
\begin{equation}\label{eq:DT.vT=tauT}
  \DG{T} \uvec{v}_T = \uvec{\tau}_T
\end{equation}
  determining, in this order, its vertex components (cf. \eqref{eq:exact.DGSC.defGV}),
  edge components (cf. \eqref{eq:exact.1.E}),
  face components (cf. \eqref{eq:ex.1.F.def1}, \eqref{eq:ex.1.F.def2}, and \eqref{eq:ex.1.F.def3}),
  and element component (cf. \eqref{eq:ex.1.T.1}).%

\subsubsection{Vertex components}
We infer from \eqref{eq:SCT=0:1} and from the definition \eqref{eq:defCtensor} of $\Ctensor$
the existence of $\lambda_E \in \Poly{k+1}(E)$ such that
\begin{equation} \label{eq:exact.DGSC.proofdefLE}
  \cdofE{\tau} = 
  \DerE{k+1}(\trnnE{(\btens{\tau}_{V_1})},\trnnE{(\btens{\tau}_{V_2})}, \btens{\tau}_E)
  + \lambda_E\ID{2}.
\end{equation}
Evaluating then \eqref{eq:defSCnnF} for any $F\in\FT$ and $r \equiv 1 \in \Poly{k+1}(F)$ and enforcing \eqref{eq:SCT=0:3} gives
\begin{equation} \label{eq:ex.DGSC.nFloop}
  0 = -\sum_{E\in\EF}\wFE\int_E \doftE{\tau} \cdot \nF.
\end{equation}
On the other hand, enforcing \eqref{eq:SCT=0:2} in the definition \eqref{eq:defSCddofF} of $\SCddofF$ 
written for a generic $r \in \Poly{k+1}(F)$ gives, using \eqref{eq:decomp.Ceta} with $\btens{\eta} = \btens{\tau}_E$ 
and expressing $\cdofE{\tau}$ according to \eqref{eq:exact.DGSC.proofdefLE},%
\begin{equation} \label{eq:ex.DGSC.tFloop}
  \begin{aligned}
    0 %% \corr{}{0}{[DDP]}
    &=
    -\int_F \cgvec{\btens{\tau}} \tdot \CURL_F \GRAD_F r
    + \sum_{E\in\EF}\wFE \int_E (\doftE{\tau} \cdot \nFE) \partial_{\nFE}\, r
    - \sumEF \tr \btens{\tau}_E \, \partial_{\tE} r
    \\
    &\quad
    \cancel{- \sumEF \nFE^\top \btens{\tau}_E \nFE\, \partial_{\tE}r}
    \cancel{+ \sum_{E\in\EF}\wFE\jump{E}{\nFE^\top \btens{\tau}_V \nFE\, r}}
    \\
    &\quad 
    \cancel{- \sumEF \nFE^\top
      \DerE{k+1}(\trnnE{(\btens{\tau}_{V_1})},\trnnE{(\btens{\tau}_{V_2})}, \btens{\tau}_E)\nFE  \, r
    }
    - \sumEF {\lambda}_E  \, r,
  \end{aligned}
\end{equation}
where the cancellations follows from \eqref{eq:IPP.E}.
Taking $r \equiv 1$ in the above expression gives $\sumEF \lambda_E = 0$ for all $F\in\FT$.
Since the first Betti number of $T$
is equal to $0$, we infer from 
this relation that the piecewise function equal to $\int_E\lambda_E$ on each $E\in\ET$ can be regarded as the gradient of a piecewise affine function on the edge skeleton of $T$, i.e., there exists a family 
$(\lambda_V)_{V\in\VT} \in \Real^{\VT}$ such that
\begin{equation}\label{eq:exact.DGSC.defLV}
  \int_E \lambda_E = \jump{E}{\lambda_V}
  \qquad\forall E\in\ET.
\end{equation}
We infer from \eqref{eq:ex.DGSC.nFloop} and \eqref{eq:ex.DGSC.tFloop} that, 
for all $r \in \Poly{1}(T)$ and all $F\in\FT$,
\begin{equation}\label{eq:ex.DGSC.1}
\sum_{E\in\EF}\wFE
\int_E \left(
\hdoftE{\tau} \cdot \GRAD r
- \tr \btens{\tau}_E\, \partial_{\tE} r
-  \lambda_E\, r
\right) = 0.
\end{equation}
We construct a family $(\bvec{z}_V)_{V\in\VT} \in (\Real^3)^{\VT}$ such that, for all $E\in\ET$,
\begin{alignat}{2} \label{eq:ex.DGSC.defvEn}
  \jump{E}{\trnE{(\bvec{z}_V)}\cdot \overline{\bvec{w}}}
  &=
  \int_E \doftE{\tau} \cdot \overline{\bvec{w}}
  &\qquad& \forall \overline{\bvec{w}} \in \vPoly{0}(E;\Real^2),
  \\ \label{eq:ex.DGSC.defvEt}
  \jump{E}{(\bvec{z}_V \cdot \tE) \dotp{r}}
  &=
  - \int_E \left(
  \tr \btens{\tau}_E \, \dotp{r} + \lambda_E \, r
  \right)
  + \jump{E}{\lambda_V\, r}
  &\qquad& \forall r \in \Poly{1}(E)
\end{alignat}
as follows: we first set an arbitrary value on a vertex $V_0$, then choose the value on neighboring vertices according to the relations \eqref{eq:ex.DGSC.defvEn} and \eqref{eq:ex.DGSC.defvEt}.
The relation \eqref{eq:ex.DGSC.1} ensures that this construction is consistent.
Indeed, any path leading to a given vertex will give the same value for that vertex, because the difference between two paths is a closed loop which can be realized as the boundary of the union of some faces $F\in\FT$ 
(since the first Betti number of $T$ is zero).

We conclude fixing the vertex components of the sought $\uvec{v}_T\in\uHdevgrad{T}$ as follows:
\begin{equation} \label{eq:exact.DGSC.defGV}
  \text{
    $\bvec{v}_V \coloneq \bvec{z}_V$\quad and\quad
    $\gdofV{v} \coloneq \btens{\tau}_V + \lambda_V \ID{3}$
    \quad for all $V\in\VT$.
  }
\end{equation}
With this choice it holds, for all $V\in\VE$,
\begin{equation}\label{eq:exact.dev.gdofV=tauV}
  \DEV \gdofV{v}
  = \DEV \btens{\tau}_V + \lambda_V \DEV \ID{3}
  = \DEV \btens{\tau}_V = \btens{\tau}_V,
\end{equation}
where the conclusion follows observing that $\btens{\tau}_V$ is traceless.

\subsubsection{Edge components}

We next identify suitable edge components for $\uvec{v}_T$ satisfying \eqref{eq:DT.vT=tauT}.
Specifically, for any $E\in\ET$, we define $\gdofE{v} \in \tPoly{k}(E;\Real^{2\times2})$,
$\dofnE{v}\in\tPoly{k}(E;\Real^2)$,
and $\doftE{v} \in \Poly{k-1}(E)$
such that,
for all $\widetilde{\btens{\sigma}}\in\tPoly[0]{k+1}(E;\Real^{2\times2})$,
all $\widetilde{\bvec{w}}\in\vPoly[0]{k+1}(E;\Real^2)$,
and all $r\in \Poly{k+1}(E)$ such that $\lproj{1}{E} r = 0$,
\begin{subequations}\label{eq:exact.1.E}
  \begin{gather}\label{eq:exact.DGSC.defGE}
    \int_E \gdofE{v} \tdot \dotp{\widetilde{\btens{\sigma}}}
    =
    -\int_E \cdofE{\tau}\tdot\widetilde{\btens{\sigma}}
    + \jump{E}{(\trnnE{(\btens{\tau}_V)} + \lambda_V\ID{2})\tdot \widetilde{\btens{\sigma}}},
    \\ \label{eq:exact.DGSC.proof3}
    \int_E \dofnE{v} \cdot \dotp{\widetilde{\bvec{w}}}
    =
    - \int_E \doftE{\tau} \cdot \widetilde{\bvec{w}}
    + \jump{E}{\trnE{(\bvec{v}_V)} \cdot \widetilde{\bvec{w}}},
    \\ \label{eq:exact.DGSC.proof5} 
    \int_E \doftE{v} \,\dotpp{r}
    = \jump{E}{(\bvec{v}_V \cdot \tE) \dotp{r}}
    + \int_E \tr \btens{\tau}_E\, \dotp{r}
    + \int_E \lambda_E \, r - \jump{E}{\lambda_V\, r}.
  \end{gather}
\end{subequations}
Let us check that, for any edge $E\in\ET$, $\DG{E}\uvec{v}_E = \utens{\tau}_E$, where we remind the reader that the components of $\DG{E}\uvec{v}_E$ are extracted from \eqref{eq:defuDG}.
The fact that the vertex components coincide is expressed by \eqref{eq:exact.dev.gdofV=tauV}, so we only need to consider the edge components.

It holds, for all $\widetilde{\btens{\sigma}} \in \tPoly[0]{k+1}(E;\Real^{2\times 2})$, 
\begin{multline} \label{eq:exact.DGSC.gdofEk+1}
  \int_E
  \DerE{k+1}(\trnnE{({\gdofV[V_1]{v}})},\trnnE{({\gdofV[V_2]{v}})}, \gdofE{v})
  \tdot \widetilde{\btens{\sigma}}
  \\
  \overset{\eqref{eq:IPP.E},\,\eqref{eq:exact.DGSC.defGV}}=
  - \int_E \gdofE{v} \tdot \dotp{\widetilde{\btens{\sigma}}}
  + \jump{E}{(\trnnE{(\btens{\tau}_V)}  + \lambda_V\ID{2})\tdot \widetilde{\btens{\sigma}}}
  \overset{\eqref{eq:exact.DGSC.defGE}}= \int_E \cdofE{\tau}\tdot\widetilde{\btens{\sigma}}.
\end{multline}
On the other hand, for all $\overline{\btens{\sigma}} \in \tPoly{0}(E;\Real^{2\times2})$, \eqref{eq:IPP.E} together with \eqref{eq:exact.DGSC.defGV} gives
\begin{equation}\label{eq:exact.DGSC.gdofE0}
  \begin{aligned}
    &\int_E
    \DerE{k+1}(\trnnE{({\gdofV[V_1]{v}})},\trnnE{({\gdofV[V_2]{v}})}, \gdofE{v})
    \tdot \overline{\btens{\sigma}}
    \\
    &\qquad
    =
    \jump{E}{\trnnE{(\btens{\tau}_V)}\tdot \overline{\btens{\sigma}}} + \jump{E}{\lambda_V \ID{2}\tdot \overline{\btens{\sigma}}}
    \overset{\eqref{eq:exact.DGSC.defLV}}{=}
    \jump{E}{\trnnE{(\btens{\tau}_V)}\tdot \overline{\btens{\sigma}}} + \int_E \lambda_E \ID{2}\tdot\overline{\btens{\sigma}}
    \\
    &\qquad
    = \int_E \left[
      \DerE{k+1}(\trnnE{(\btens{\tau}_{V_1})},\trnnE{(\btens{\tau}_{V_2})}, \btens{\tau}_E)
      + \lambda_E \ID{2}
      \right]\tdot\overline{\btens{\sigma}}
    \overset{\eqref{eq:exact.DGSC.proofdefLE}}{=} \int_E \cdofE{\tau} \tdot \overline{\btens{\sigma}},
  \end{aligned}
\end{equation}
where we have used \eqref{eq:IPP.E} along with the fact that $\dotp{\overline{\btens{\sigma}}} = \btens{0}$ (since $\overline{\btens{\sigma}}$ is constant) to write 
$\jump{E}{\trnnE{(\btens{\tau}_V)}\tdot \overline{\btens{\sigma}}} = \int_E \DerE{k+1}(\trnnE{(\btens{\tau}_{V_1})},\trnnE{(\btens{\tau}_{V_2})}, \btens{\tau}_E) \tdot \overline{\btens{\sigma}}$ in the third equality.
Summing \eqref{eq:exact.DGSC.gdofEk+1} and \eqref{eq:exact.DGSC.gdofE0} and noticing that $\widetilde{\sigma} + \overline{\sigma}$ spans $\Poly{k+1}(E;\Real^{2\times 2})$ as $(\widetilde{\sigma},\overline{\sigma})$ spans $\tPoly[0]{k+1}(E;\Real^{2\times 2})\times\tPoly{0}(E;\Real^{2\times2})$, we conclude that
\begin{equation} \label{eq:ex.1.E.0}
  \DerE{k+1}(\trnnE{({\gdofV[V_1]{v}})},\trnnE{({\gdofV[V_2]{v}})}, \gdofE{v}) =  \cdofE{\tau}.
\end{equation}

Next, for all $\bvec{w}\in\Poly{k+1}(E;\Real^2)$, writing $\bvec{w} = \overline{\bvec{w}} + \widetilde{\bvec{w}}$ with $\overline{\bvec{w}}\coloneq\vlproj{0}{E}\bvec{w}$, we have
\[
\begin{aligned}
  \int_E
  \DerE{k+1}(\trnE{(\bvec{v}_{V_1})},\trnE{(\bvec{v}_{V_2})}, \dofnE{v}) \cdot \bvec{w}
  \overset{\eqref{eq:IPP.E}}&=
  - \int_E \dofnE{v} \cdot \dotp{\widetilde{\bvec{w}}}
  + \jump{E}{\trnE{(\bvec{v}_V)} \cdot \widetilde{\bvec{w}}}
  + \jump{E}{\trnE{(\bvec{v}_V)} \cdot \overline{\bvec{w}}}
  \\
  \overset{\eqref{eq:exact.DGSC.proof3},\,\eqref{eq:ex.DGSC.defvEn}}&=
  \int_E \doftE{\btens{\tau}} \cdot \bvec{w},
\end{aligned}
\]
so that
\begin{equation} \label{eq:ex.1.E.ttau}
  \DerE{k+1}(\trnE{(\bvec{v}_{V_1})},\trnE{(\bvec{v}_{V_2})}, \dofnE{v}) = \doftE{\tau}.
\end{equation}

To conclude the equality of the edge components, we have to prove that
\begin{equation} \label{eq:ex.1.E.2}
  \gdofE{v}
  - \frac13\left(
  \tr \gdofE{v} + \DerE{k}(\bvec{v}_{V_1} \cdot \tE, \bvec{v}_{V_2} \cdot \tE, \doftE{v})
  \right) \ID{2} 
  = \btens{\tau}_E.
\end{equation}
To this purpose, we start by noticing that, for all $\bvec{\sigma} \in \tPoly[0]{k+1}(E;\Real^{2\times2})$ letting, for the sake of brevity, $\check{\btens{\sigma}}\coloneq\btens{\sigma} - \frac13(\tr\btens{\sigma})\ID{2}$,
\begin{equation}\label{eq:ex.DGSC.dofEIPP}
  \begin{aligned}
    \term
    &\coloneq\int_E \left[
      \gdofE{v}
      - \frac13\left(
      \tr \gdofE{v}
      + \DerE{k}(\bvec{v}_{V_1} \cdot \tE, \bvec{v}_{V_2} \cdot \tE, \doftE{v})
      \right)\ID{2}
      \right] 
    \tdot \dotp{\btens{\sigma}}
    \\
    &=
    \int_E \gdofE{v} \tdot\dotp{\check{\btens{\sigma}}}
    - \frac13 \int_E  \DerE{k}(\bvec{v}_{V_1} \cdot \tE, \bvec{v}_{V_2} \cdot \tE, \doftE{v}) \, \tr(\dotp{\btens{\sigma}})
    \\
    \overset{\eqref{eq:exact.DGSC.defGE},\,\eqref{eq:IPP.E}}&=
    -\int_E \cdofE{\tau} \tdot \check{\btens{\sigma}}
    + \jump{E}{\big(\trnnE{(\btens{\tau}_V)} + \lambda_V \ID{2} \big) \tdot \check{\btens{\sigma}}}
    + \frac13 \int_E \doftE{v} \, \tr(\dotpp{\btens{\sigma}})
    \\
    &\quad
    - \frac13 \jump{E}{(\bvec{v}_V \cdot \tE) \tr(\dotp{\btens{\sigma}})}
    \\
    &=
    -\int_E \cdofE{\tau} \tdot \check{\btens{\sigma}}
    + \jump{E}{\trnnE{(\btens{\tau}_V)}  \tdot \check{\btens{\sigma}}}
    + \frac13 \jump{E}{\lambda_V\,\tr \btens{\sigma}}
    + \frac13 \int_E \doftE{v} \, \tr(\dotpp{\btens{\sigma}})
    \\
    &\quad
    - \frac13 \jump{E}{(\bvec{v}_V \cdot \tE) \tr(\dotp{\btens{\sigma}})}
    \\
    \overset{\eqref{eq:exact.DGSC.proofdefLE},\,\eqref{eq:IPP.E}}&=
    \int_E \btens{\tau}_E \tdot \dotp{\check{\btens{\sigma}}}
    - \frac13 \int_E \lambda_E\, \tr \btens{\sigma}
    + \frac13 \jump{E}{\lambda_V\,\tr \btens{\sigma}}
    + \frac13 \int_E \doftE{v} \, \tr(\dotpp{\btens{\sigma}})
    \\
    &\quad   
    - \frac13 \jump{E}{(\bvec{v}_V \cdot \tE) \tr(\dotp{\btens{\sigma}})},
  \end{aligned}
\end{equation}
where we have additionally used the fact that $\ID{2} \tdot \check{\btens{\sigma}} = \frac13 \tr \btens{\sigma}$ in the fourth equality.
Taking $\btens{\sigma} = \btens{\upsilon}$ with $\btens{\upsilon}\in\tPoly[0]{k+1}(E;\Real^{2\times2})$ such that $\tr \btens{\upsilon} = 0$, \eqref{eq:ex.DGSC.dofEIPP} yields
\begin{equation} \label{eq:ex.2.E.1}
  \term = \int_E \btens{\tau}_E \tdot \dotp{\btens{\upsilon}} .
\end{equation}
For $\btens{\sigma}$ such that $\btens{\sigma} = r \ID{2}$ for some $r \in \Poly[0]{k+1}(E)$, \eqref{eq:ex.DGSC.dofEIPP} gives
\begin{equation} \label{eq:ex.2.E.2}
  \term
  = \frac13 \int_E \tr {\btens{\tau}_E} \, \dotp{r}
  + \frac23 \left(
  \int_E \doftE{v}\, \dotpp{r} - \jump{E}{(\bvec{v}_V\cdot\tE) \dotp{r}}
  - \int_E \lambda_E\, r + \jump{E}{\lambda_V \, r}
  \right).
\end{equation}
If, in particular, we take $r = \widetilde{r}$ such that $\lproj{1}{E}\widetilde{r} = 0$, plugging the definition \eqref{eq:exact.DGSC.proof5} of $\doftE{v}$ into \eqref{eq:ex.2.E.2} yields
\begin{equation} \label{eq:ex.2.E.2.0}
  \term = \int_E \tr \btens{\tau}_E \dotp{\widetilde{r}}.
\end{equation}
On the other hand, taking $q = \overline{r} \in \Poly[0]{1}(E)$, 
we infer from \eqref{eq:exact.DGSC.defGV}, \eqref{eq:ex.DGSC.defvEt}, and \eqref{eq:ex.2.E.2} that
\begin{equation} \label{eq:ex.2.E.2.1}
  %%   \begin{aligned}
  \term
  %%     &= \frac13 \int_E \tr \corr{\btens{\tau}}{\btens{\tau}_E}{[DDP]} \, \dotp{\overline{r}}
  %%     - \frac23 \left( \jump{E}{(\bvec{v}_V\cdot\tE) \dotp{\overline{r}}}
  %%     + \int_E \lambda_E\, \overline{r} + \jump{E}{\lambda_V \, \overline{r}}
  %%     \right)
  %%     &=
  %%     \\
  = \frac13 \int_E \tr \btens{\tau}_E \, \dotp{\overline{r}}
  + \frac23 \int_E\tr \btens{\tau}_E \, \dotp{\overline{r}}
  =  \int_E \tr \btens{\tau}_E \, \dotp{\overline{r}}
  %%   \end{aligned}
\end{equation}
Noticing that $\dotp{\left[\btens{\upsilon} + \left(\widetilde{r} + \overline{r}\right)\ID{2}\right]}$ spans $\tPoly{k}(E;\Real^{2\times2})$ when $\btens{\upsilon}$ spans the zero-trace subspace of $\tPoly[0]{k+1}(E;\Real^{2\times 2})$,
$\widetilde{r}$ spans the subspace of functions in $\Poly{k+1}(E)$ with zero $L^2$-orthogonal projection on $\Poly{1}(E)$,
and $\overline{r}$ spans $\Poly[0]{1}(E)$, we
conclude from \eqref{eq:ex.2.E.1}, \eqref{eq:ex.2.E.2.0}, and \eqref{eq:ex.2.E.2.1} that \eqref{eq:ex.1.E.2} holds.
Combining this relation with \eqref{eq:exact.dev.gdofV=tauV}, \eqref{eq:ex.1.E.0}, and \eqref{eq:ex.1.E.ttau} gives
\[
\DG{E} \uvec{v}_E = \utens{\tau}_E
\qquad\forall E \in \ET.
\]

\subsubsection{Face components}

Let $F\in\FT$.
Since $\DIVF:\Roly{\compl,k+1}(F)\to\Poly{k}(F)$ is an isomorphism, there exists a unique $\dofnF{v} \in \Poly{k}(F)$ such that, 
for all $\bvec{w}\in\Roly{\compl,k+1}(F)$,
\begin{equation} \label{eq:ex.1.F.def1}
  \int_F \dofnF{v} \DIVF \bvec{w}
  = - \int_F \rttvec[F]{\tau} \cdot \bvec{w}
  + \sum_{E \in \EF} \wFE \int_E 
  (\dofnE{v} \cdot \nF)
  (\bvec{w}\cdot\nFE),
\end{equation}
with $\dofnE{v}$ defined by \eqref{eq:exact.DGSC.proof3}.
Plugging this $\dofnF{v}$ into the definition \eqref{eq:defDGntF} of $\DGntF$
with test function $\bvec{w}$ in $\Roly{\compl,k+1}(F)\subset\RT{k+1}(F)$, we get
\begin{equation}\label{eq:cRproj.DGntF=cRproj.tauRF}
  \cRproj[F]{k+1}\DGntF \uvec{v}_F\, =\, \cRproj[F]{k+1}{\rttvec[F]{\tau}}.
\end{equation}

Recalling the definition \eqref{eq:Poly=CGoly+cCGoly} of $\CGoly{k-1}(F)$ and using
the decomposition 
$\vPoly{k}(F;\Real^2) = \GRAD_F \Poly{k+1}(F) \oplus \bvec{x}^\perp \Poly{k-1}(F)$ (see \cite{Arnold:18}), 
we can write 
\begin{equation} \label{eq:ex.1.F.dec1}
  \CGoly{k-1}(F) = \CURL_F \GRAD_F \Poly{k+1}(F) \oplus \CURL_F \bvec{x}^\perp \Poly{k-1}(F).
\end{equation}
Since $\tr:\CURL_F \bvec{x}^\perp \Poly{k-1}(F)\to\Poly{k-1}(F)$ is an isomorphism
(notice that $\Poly{k-1}(F) = \tr \vPoly{k-1}(F;\Real^2) \overset{\eqref{eq:Poly=CGoly+cCGoly}}
{=} \tr \CGoly{k-1}(F) \overset{\eqref{eq:ex.1.F.dec1}}{=} 
\tr (\CURL_F \bvec{x}^{\perp} \Poly{k-1}(F))$ since $\tr \cGoly{k-1}(F) = 0$ 
and $\tr \CURL_F \GRAD_F = \ROTF \GRAD_F = 0$ and count the dimensions), 
we can define uniquely $\gdofF{v} \in \Poly{k-1}(F)$ enforcing the following condition:
For all $\btens{\sigma} \in \CURL_F\bvec{x}^\perp \Poly{k-1}(F)$,
\begin{equation} \label{eq:ex.1.F.def2}
  \begin{aligned}
  \frac{1}{3} \int_F \gdofF{v}\,\tr \btens{\sigma}
  &= \sumEF (\dofnE{v} \cdot \nFE)\, \nFE^\top\btens{\sigma}\nFE
  \\
  &\quad
  + \sumEF \doftE{v}\,\tE^\top\btens{\sigma}\nFE
  -\int_F \cgvec{\btens{\tau}} \tdot \btens{\sigma}.
  \end{aligned}
\end{equation}

Likewise, since $\VDIVF:\CGoly{\compl,k}(F)\to\vPoly{k-1}(F;\Real^2)$ is an isomorphism 
(see Lemma~\ref{lemma:DIVFCG} below), $\eqref{eq:defDGttF}$ yields a unique 
\begin{equation} \label{eq:ex.1.F.def3}
  \text{
    $\doftF{v} \in \vPoly{k-1}(F;\Real^2)$ such that
    $\cCGproj[F]{k}\DGttF \uvec{v}_F\, =\, \cCGproj[F]{k}\cgtvec[F]{\tau}$.
  }
\end{equation}

We next check that the face components defined above (along with the vertex components defined by \eqref{eq:exact.DGSC.defGV} and the edge components defined by \eqref{eq:exact.1.E}) yield the equality of the face components in \eqref{eq:DT.vT=tauT}.
Enforcing $\SCnnF\utens{\tau}_F = 0$ (cf. \eqref{eq:SCT=0:3}) in the definition \eqref{eq:defSCnnF} of $\SCnnF$, we get
\begin{equation}\label{eq:rtt.F.tau=sum.E.doftE.tau}
  \int_F \rttvec[F]{\tau}\cdot \CURLF r
  = \sum_{E\in\EF}\wFE\int_E (\doftE{\tau} \cdot \nF) \, r
  \qquad \forall r\in \Poly{k+1}(F).
\end{equation}
  Letting $r \in \Poly{k+1}(F)$, writing
the definition \eqref{eq:defDGntF} of $\DGntF\uvec{v}_F$ for $\bvec{w} = \CURLF r\in\vPoly{k}(F;\Real^2)\subset\RT{k+1}(F)$, 
and using the fact that $\DIVF \CURLF r = 0$ and $\CURLF r \cdot \nFE = - \partial_{\tE} r$ gives
\begin{equation*}
  \begin{aligned}
    &\int_F \DGntF\uvec{v}_F \cdot \CURLF r
    \\
    &\quad=
    -\sum_{E\in\EF}\wFE\int_E (\dofnE{v} \cdot \nF)\,\dotp{r}
    \\
    \overset{\eqref{eq:IPP.E}}&\quad=
    \sum_{E\in\EF}\wFE\int_E\DerE{k+1}(\trnE{(\btens{v}_{V_1})},\trnE{(\btens{v}_{V_2})}, \dofnE{v}) \cdot \nF
    \, r -
    \cancel{\sum_{E\in\EF}\wFE\jump{E}{(\bvec{v}_V \cdot\nF) \, r}} \\
    \overset{\eqref{eq:ex.1.E.ttau}}&\quad=
    \sum_{E\in\EF}\wFE\int_E (\doftE{\tau} \cdot \nF) r
    \overset{\eqref{eq:rtt.F.tau=sum.E.doftE.tau}}= \int_F \rttvec[F]{\tau}\cdot \CURLF r,
  \end{aligned}
\end{equation*}
where we have invoked \eqref{eq:zero-jump} with $\varphi_V = (\bvec{v}_V\cdot\normal_F) r$ in the cancellation.
Hence, we have $\Rproj[F]{k}\DGntF\uvec{v}_F = \Rproj[F]{k}\rttvec[F]{\tau}$, 
which, combined with \eqref{eq:cRproj.DGntF=cRproj.tauRF}, gives, 
after recalling the definition \eqref{eq:RT} of $\RT{k+1}$ and using \cite[Eq.~(2.14)]{Di-Pietro.Droniou:23},
\begin{equation} \label{eq:exact.DGSC.proofdefRT}
  \DGntF\uvec{v}_F = \rttvec[F]{\tau}.
\end{equation}

For any $r \in \Poly{k+1}(F)$, writing the definition \eqref{eq:defDGttF} of $\DGttF\uvec{v}_F$ with $\btens{\sigma} = \CURLF \GRAD_F r$ and noticing that $\VDIVF\CURLF \GRAD_F r = 0$, $\tr \CURLF \GRAD_F r = \ROTF \GRAD_F r = 0$, 
  and that $\tangent_E^\top(\CURLF\GRADF r)\normal_{FE} = -\dotpp{r}$ 
  and $\normal_{FE}^\top(\CURLF\GRADF r)\normal_{FE} = -\partial_{\normal_{FE}}\dotp{r}$ for all $E\in\EF$,
we get
\begin{multline} \label{eq:ex.1.F.2.0}
  \int_F \DGttF\uvec{v}_F\tdot\CURLF\GRADF{r}
  = - \sumEF (\dofnE{v} \cdot \nFE)\, \partial_{\nFE}\dotp{r}
  \\
  - \sumEF \doftE{v}\,\dotpp{r}    
  \eqcolon\term_1 + \term_2.
\end{multline}
Writing $\term_1 = \sum_{E\in\EF}\wFE\term_1(E)$
and using the 
the definition \eqref{eq:IPP.E} of $\DerE{k+1}$, 
we have, for all $E\in\EF$,
\begin{equation}\label{eq:ex.1.F.2.1}
  \begin{aligned}
    \term_1(E)
    &= \int_E \DerE{k+1} (\bvec{v}_{V_1} \cdot \nFE, \bvec{v}_{V_2} \cdot \nFE, \dofnE{v} \cdot \nFE)\, \partial_{\nFE}r
    - \jump{E}{(\bvec{v}_{V} \cdot \nFE) \partial_{\nFE}r}
    \\
    \overset{\eqref{eq:ex.1.E.ttau}}
    &= \int_E (\doftE{\tau}\cdot\nFE)\,\partial_{\nFE}r 
    - \jump{E}{(\bvec{v}_{V} \cdot \nFE) \partial_{\nFE}r}.
  \end{aligned}
\end{equation}
To treat the second term, we start by noticing that, for all $r\in\Poly{k+1}(F)$,
\begin{multline*} %% \label{eq:ex.1.F.2.2}
  \sumEF \tr \gdofE{v}\, \dotp{r}
  \\
  \begin{aligned}
    \overset{\eqref{eq:IPP.E}}&=
    -\sumEF \tr \DerE{k+1}(\trnnE{(\gdofV[V_1]{v})},\trnnE{(\gdofV[V_2]{v})}, \gdofE{v})\, r
    + \sum_{E\in\EF}\omega_{FE}\jump{E}{\tr \trnnE{(\gdofV{v})} \, r}
    \\
    \overset{\eqref{eq:ex.1.E.0}}&=
    -\sumEF \tr \cdofE{\tau} \, r
    + \sum_{E\in\EF}\omega_{FE}\jump{E}{\tr \trnnE{(\gdofV{v})} \, r}
    \\
    \overset{\eqref{eq:exact.DGSC.proofdefLE},\,\eqref{eq:exact.DGSC.defGV}}&=
    -\sumEF \tr \DerE{k+1}(\trnnE{(\btens{\tau}_{V_1})},\trnnE{(\btens{\tau}_{V_2})}, \btens{\tau}_E) \, r
    - 2 \sumEF \lambda_E\, r
    \\
    &\quad
    + \sum_{E\in\EF}\omega_{FE} \jump{E}{\tr \trnnE{(\btens{\tau}_V)} \, r}
    + \cancel{2\sum_{E\in\EF}\omega_{FE}\jump{E}{\lambda_V \, r}}
    \\
    \overset{\eqref{eq:IPP.E}}&=
    \sumEF \tr \btens{\tau}_E\, \dotp{r}
    - 2 \sumEF \lambda_E\, r,
  \end{aligned}
\end{multline*}
where the cancellation is a consequence of \eqref{eq:zero-jump} 
  with $\varphi_V = \lambda_V \, r$.
Combining this relation with \eqref{eq:ex.1.E.2}, we get
\[
\sumEF \DerE{k} (\bvec{v}_{V_1} \cdot \tE, \bvec{v}_{V_2} \cdot \tE,\doftE{v})\,\dotp{r} 
= - \sumEF \tr \btens{\tau}_E \, \dotp{r} - \sumEF \lambda_E\, r.
\]
Therefore, using the definition \eqref{eq:IPP.E} of $\DerE{k}$, we have that
\begin{equation} \label{eq:ex.1.F.2.3}
  \begin{aligned}
    \term_2
    &=
    \sumEF \DerE{k} (\bvec{v}_{V_1} \cdot \tE, \bvec{v}_{V_2} \cdot \tE,\doftE{v})\,\dotp{r}
    - \sum_{E\in\EF}\omega_{FE}\jump{E}{(\bvec{v}_V \cdot \tE) \dotp{r}}\\
    &=
    - \sumEF \tr \btens{\tau}_E \, \dotp{r}
    - \sumEF \lambda_E\, r
    - \sum_{E\in\EF}\omega_{FE} \jump{E}{(\bvec{v}_V \cdot \tE) \dotp{r}}.
  \end{aligned}
\end{equation}
Plugging \eqref{eq:ex.1.F.2.1} and \eqref{eq:ex.1.F.2.3} into \eqref{eq:ex.1.F.2.0} gives
\begin{equation}\label{eq:ex.1.F.2.4}
  \begin{aligned}
    &\int_F \DGttF\uvec{v}_F\tdot\CURLF\GRADF{r}
    \\
    &\quad=
    \sumEF(\doftE{\tau}\cdot\nFE)\,\partial_{\nFE}r
    - \sumEF\tr\btens{\tau}_E\,\dotp{r}
    - \sumEF \lambda_E\,r
    \\
    &\qquad - \sum_{E\in\EF}\wFE \left(
    \jump{E}{(\bvec{v}_{V} \cdot \nFE) \partial_{\nFE}r} + 
    \jump{E}{(\bvec{v}_V \cdot \tE) \dotp{r}}
    \right)
    \\
    &\quad=
    \int_F \cgtvec{\tau} \tdot \CURLF\GRADF{r}
    - \cancel{\sum_{E\in\EF}\wFE \jump{E}{\bvec{v}_V \cdot \GRAD_F r}},
  \end{aligned}
\end{equation}
where we have used \eqref{eq:ex.DGSC.tFloop} in the second equality,
enforced \eqref{eq:SCT=0:2} to cancel the term involving $\SCddofF \utens{\tau}_F$, 
and invoked \eqref{eq:zero-jump} with $\varphi_V = \bvec{v}_V \cdot \GRADF r$ to cancel the sum over the edges.
The definition \eqref{eq:ex.1.F.def2} of $\gdofF{v}$ readily gives,
for all $\btens{\sigma} \in \CURL_F\bvec{x}^\perp \Poly{k-1}(F)$,
\begin{equation} \label{eq:ex.1.F.2.5}
  \int_F \DGttF\uvec{v}_F\tdot\btens{\sigma} = \int_F \cgtvec{\tau} \tdot \btens{\sigma}.
\end{equation}
Recalling the definition \eqref{eq:ex.1.F.def3} of $\doftF{v}$ and using \eqref{eq:ex.1.F.2.4} and \eqref{eq:ex.1.F.2.5} together with the decomposition \eqref{eq:ex.1.F.dec1} 
to infer $\CGproj{k-1} \DGttF \uvec{v}_F = \CGproj{k-1} \utens{\tau}_F$, we finally get, 
after recalling \eqref{eq:CGtrimPoly} and using \cite[Eq.~(2.14)]{Di-Pietro.Droniou:23},
\begin{equation}\label{eq:ex.1.F.DGtt}
  \DGttF \uvec{v}_F = \cgtvec{\tau}.
\end{equation}

\subsubsection{Element component}

Finally, for the element component, we use the fact that
$\VDIV : \SRoly{\compl,k}(T) \rightarrow \vPoly{k-1}(T;\Real^3)$ is an isomorphism 
to find, from \eqref{eq:defDGT},
\begin{equation} \label{eq:ex.1.T.1}
\text{%
  $\bvec{v}_T \in \vPoly{k-1}(T;\Real^3)$ such that
  $\cSRproj{k} \DGT \uvec{v}_T = \cSRproj{k}\srtvec[T]{\tau}$.
}
\end{equation}
  Recalling \cite[Eq.~(2.14)]{Di-Pietro.Droniou:23}, in order to prove that $\SRtrimproj{k} \DGT \uvec{v}_T = \srtvec[T]{\tau}$, it
only remains to check that
\begin{equation}\label{eq:DGT=projSR-.tau}
  \SRproj{k-1} \DGT \uvec{v}_T = \SRproj{k-1}\srtvec[T]{\tau}.
\end{equation}
To prove \eqref{eq:DGT=projSR-.tau}, we start writing the definition \eqref{eq:defDGT} of $\DGT$ with $\btens{\sigma} \in \Holy{\compl,l}(T)$ and using the fact that $\VDIV \CURL \btens{\sigma} = 0$ 
and that $(\CURL \btens{\sigma}) \nF = \VDIV(\btens{\sigma} \times \nF)$ for all $F\in\FT$
to infer
\begin{multline*}
  \term\coloneq
  \int_T \DGT \uvec{v}_T \tdot \CURL\btens{\sigma} 
%%   &= \sumFT \dofnF{v}\, \nF^\top \CURL \btens{\sigma} \nF
%%   + \sumFT \hdoftF{v}^\top \CURL \btens{\sigma} \nF
  %%
  \\
  = \sumFT \dofnF{v}\, \VDIV(\btens{\sigma}\times\nF) \cdot \nF
  + \sumFT \hdoftF{v} \cdot \VDIV(\btens{\sigma}\times\nF)
  \eqcolon\mathfrak{A} + \mathfrak{B}.
\end{multline*}
Recalling \eqref{eq:poly.trace.H} and observing that $\vPoly{k}(F;\Real^2) \subset \RT{k+1}(F)$, we can invoke the definitions \eqref{eq:defDGntF} of $\DGntF$ and \eqref{eq:defDGttF} of $\DGttF$ to continue as follows:
\[
\begin{aligned}
  &\left.\begin{aligned}
    \term
    &=
    -\sumFT \DGntF\uvec{v}_F \cdot \trntF{(\btens{\sigma}\times\nF)}
    \\
    &\quad
    + \sumEFT 
    (\dofnE{v} \cdot \nF)
    \,
    \nF^\top (\btens{\sigma} \times \nF) \nFE
  \end{aligned}
  \right\}\mathfrak{A}
  \\
  &\quad
  \left.\begin{aligned}
    &\quad
    -\sumFT \DGttF\uvec{v}_F \tdot \trttF{(\btens{\sigma}\times\nF)}
    - \frac13 \sumFT \gdofF{v} \, \cancel{\ID{2}\tdot \trttF{(\btens{\sigma}\times\nF)}}
    \\
    &\quad+ \sumEFT 
    (\dofnE{v} \cdot \nFE)
    \, \nFE^\top(\btens{\sigma} \times \nF)\nFE\\
    &\quad
    + \sumEFT 
    \doftE{v}
    \,\tE^\top (\btens{\sigma}\times\nF)\nFE,
  \end{aligned}\right\}\mathfrak{B}
\end{aligned}
\]
where the cancellation on the third line occurs because $\btens{\sigma}$ is symmetric, therefore 
$\tr \trttF{ ( \btens{\sigma}\times \nF ) } = 0$.
We continue using the relations \eqref{eq:exact.DGSC.proofdefRT} and \eqref{eq:ex.1.F.DGtt} 
to replace, respectively, $\DGntF\uvec{v}_F$ with $\rttvec[F]{\tau}$ and $\DGttF \uvec{v}_F$ with $\cgtvec{\tau}$ in the first and third terms in the right-hand side,
combining the second and fifth terms and 
using the injection \eqref{eq:injection.3d} in $\Real^3$
(additionally using the fact that $(\btens{\sigma}\times\nF)\nFE = - \btens{\sigma}\tE$),
and noticing that $\tE^\top (\btens{\sigma}\times\nF)\nFE = - \trttE{\sigma}$ in the last term to obtain
\begin{equation} \label{eq:ex.1.T.2}
  \begin{aligned}
    \term
    &=
    - \sumFT \rttvec[F]{\tau} \cdot \trntF{(\btens{\sigma}\times\nF)} 
    - \sumFT \cgtvec[F]{\tau} \tdot \trttF{(\btens{\sigma}\times\nF)}\\ 
    &\quad - \cancel{%
      \sumEFT 
      \hdofnE{v}
      \btens{\sigma}\tE
    }
    %%     \\
    %%     &\quad
    - \cancel{\sumEFT 
      \doftE{v}
      \, \trttE{\sigma}}\\
    \overset{\eqref{eq:defSCT}}&= \int_T \srvec{\tau} \tdot \CURL \btens{\sigma} - \cancel{\int_T \SCT \utens{\tau}_T \tdot \btens{\sigma}},
  \end{aligned}
\end{equation}
where the cancellations of the edge terms follows from \eqref{eq:zero-edges}, while the conclusion is a consequence of the zero-sym curl condition \eqref{eq:SCT=0:4}.
By Lemma \ref{lemma:poly.SRfromH}, \eqref{eq:ex.1.T.2} implies \eqref{eq:DGT=projSR-.tau}.

\subsection{Proof of \eqref{eq:exact.SCDD}}

We conclude by counting the dimensions of each space, which are explicitly known and can be expressed in terms of the number of geometric entity of each dimension.
Specifically, we have for all $k \geq 1$
\begin{equation*}
  \begin{aligned}
    \DIM \RT{1}(T)
    &= 4
    \\
    \DIM \uHdevgrad{T}
    &=
    12 \vert \VT \vert
    + (7 k + 6) \vert \ET \vert
    + (2 k^2 + 3 k + 1) \vert \FT \vert
    + \frac12 (k^3 + 3 k^2+ 2 k),
    \\
    \DIM \uHsymcurl{T}
    &=
    8 \vert \VT \vert
    + (10 k + 16) \vert \ET \vert
    + (3 k^2 + 8 k + 3) \vert \FT \vert
    +\frac16 (8 k^3 + 33 k^2+ 25 k),
    \\
    \DIM \uHdivdiv{T}
    &=
    -3
    + (3 k + 6) \vert \ET \vert
    + (k^2 + 5 k + 6) \vert \FT \vert
    + (k^3 + 5 k^2+ 5 k),
    \\
    \DIM \Poly{k}(T)
    &= 1 + \frac16(k^3 + 6 k^2 + 11 k).
  \end{aligned}
\end{equation*}
Moreover, by the exactness properties proved above, we have
\begin{equation*}
  \begin{aligned}
    \DIM \Image \DG{T}
    &=
    \dim \uHdevgrad{T} - \dim \Ker \DG{T}
    \overset{\eqref{eq:exact.RTDG}}=
    \dim \uHdevgrad{T} - \dim \RT{1}(T)
    \\
    &=
    -4
    + 12 \vert \VT \vert
    + (7 k + 6) \vert \ET \vert
    + (2 k^2 + 3 k + 1) \vert \FT \vert
    + \frac12(k^3 + 3 k^2+ 2 k),
    \\
    \DIM \Image \SC{T}
    &=
    \dim \uHsymcurl{T} - \DIM \Ker \SC{T}
    \overset{\eqref{eq:exact.DGSC}}=
    \dim \uHsymcurl{T} - \DIM \Image \DG{T}    
    \\
    &=
    4
    - 4 \vert \VT \vert
    + (3 k + 10) \vert \ET \vert
    + (k^2 + 5 k + 2) \vert \FT \vert
    + \frac16(5 k^3 + 24 k^2+ 19 k),
    \\
    \DIM \Ker \DD{T}
    &= \dim \uHdivdiv{T} - \dim \Image \DD{T}
    \overset{\eqref{eq:exact.SCDD}}=
    \dim \uHdivdiv{T} - \dim \Poly{k}(T)
    \\
    &=
    -4
    + (3 k + 6) \vert \ET \vert
    + (k^2 + 5 k + 6) \vert \FT \vert
    + \frac16(5 k^3 + 24 k^2+ 19 k).
  \end{aligned}
\end{equation*}
Therefore,
\begin{equation*}
  \DIM \Ker \DD{T} - \DIM \Image \SC{T} = 4 \left(
  \vert\VT\vert - \vert\ET\vert + \vert\FT\vert - 2
  \right).
\end{equation*}
The Euler characteristic for an element with trivial topology gives the identity
$\vert\VT\vert - \vert\ET\vert + \vert\FT\vert = 2$.
Therefore, $\DIM \Ker \DD{T} = \DIM \Image \SC{T}$.
We conclude using the local complex property \eqref{eq:LC.SCDD}.

\begin{remark}[The case $k=0$] \label{rem:SCDD.k=0}
  The formulas above fail when $k = 0$.
  Indeed, they give a negative dimension (of $-3$) on the cell.
  The problem stems from the fact that $\DIM \Holy{-1}(T) = \DIM \Holy{-2}(T)$.
  Correcting the formulas, we find
  \begin{equation*}
    \DIM \Ker DD_T^0 - \DIM \Image \utens{SC}_T^0 = 4 \left(
    \vert\VT\vert - \vert\ET\vert + \vert\FT\vert \right)
    - 5 
    = 3,
  \end{equation*}
    showing that exactness does not hold for $k=0$.
\end{remark}

\appendix

\section{Results on local polynomial spaces}\label{sec:results.polynomial.spaces}

\begin{lemma}[Isomorphism of the face divergence between polynomial spaces]\label{lemma:DIVFCG}
  The operator $\VDIVF:\CGoly{\compl,\ell}(F)\to\vPoly{\ell-1}(F;\Real^2)$ is an isomorphism.
\end{lemma}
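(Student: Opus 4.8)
The plan is to recast the claim as the invertibility of a single endomorphism of $\vPoly{\ell-1}(F;\Real^2)$ and then prove that invertibility by a short explicit computation. Throughout set $\bvec{y}\coloneq\bvec{x}-\bvec{x}_F$ and, for $\bvec{w}=(w_1,w_2)$, $\bvec{w}^\perp\coloneq(-w_2,w_1)$; the case $\ell=0$ is trivial (both spaces reduce to $\{0\}$), so assume $\ell\ge 1$. Introduce
\[
T:\vPoly{\ell-1}(F;\Real^2)\to\vPoly{\ell-1}(F;\Real^2),
\qquad
T\bvec{v}\coloneq\VDIVF\big[(\mathrm{Id}-\mathrm{adj})(\bvec{v}\otimes\bvec{y}^\intercal)\big],
\]
which is well defined because $\bvec{v}\otimes\bvec{y}^\intercal$ has degree $\le\ell$, $\mathrm{Id}-\mathrm{adj}$ preserves the degree, and $\VDIVF$ lowers it by one. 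If $T$ is an isomorphism then the lemma follows at once: given $\bvec{g}\in\vPoly{\ell-1}(F;\Real^2)$, the field $(\mathrm{Id}-\mathrm{adj})\big((T^{-1}\bvec{g})\otimes\bvec{y}^\intercal\big)$ lies in $\cCGoly{\ell}(F)$ and is sent by $\VDIVF$ to $\bvec{g}$, which gives surjectivity; and if $\btens{A}\in\cCGoly{\ell}(F)$ satisfies $\VDIVF\btens{A}=\bvec{0}$, writing $\btens{A}=(\mathrm{Id}-\mathrm{adj})(\bvec{v}\otimes\bvec{y}^\intercal)$ (possible by the very definition of $\cCGoly{\ell}(F)$) yields $T\bvec{v}=\bvec{0}$, hence $\bvec{v}=\bvec{0}$ and $\btens{A}=\btens{0}$, which gives injectivity.

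Next I would make $T$ explicit. Using the $2\times 2$ identity $\mathrm{adj}\,\btens{M}=(\tr\btens{M})\ID{2}-\btens{M}$ one has $(\mathrm{Id}-\mathrm{adj})(\bvec{v}\otimes\bvec{y}^\intercal)=2\,\bvec{v}\otimes\bvec{y}^\intercal-(\bvec{v}\cdot\bvec{y})\ID{2}$. A direct computation of the row-wise divergence gives $\VDIVF(\bvec{v}\otimes\bvec{y}^\intercal)=2\bvec{v}+(\bvec{y}\cdot\GRADF)\bvec{v}$ and $\VDIVF\big((\bvec{v}\cdot\bvec{y})\ID{2}\big)=\GRADF(\bvec{v}\cdot\bvec{y})$, and the standard two-dimensional identity $\GRADF(\bvec{v}\cdot\bvec{y})=\bvec{v}+(\bvec{y}\cdot\GRADF)\bvec{v}-(\ROTF\bvec{v})\,\bvec{y}^\perp$ then yields $T\bvec{v}=3\bvec{v}+(\bvec{y}\cdot\GRADF)\bvec{v}+(\ROTF\bvec{v})\,\bvec{y}^\perp$. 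Since the operator $\bvec{v}\mapsto(\bvec{y}\cdot\GRADF)\bvec{v}+(\ROTF\bvec{v})\,\bvec{y}^\perp$ preserves homogeneity degree, it suffices to prove that $T$ is injective on the subspace of fields with components homogeneous of a fixed degree $m\in\{0,\dots,\ell-1\}$; for such $\bvec{v}$, Euler's identity gives $(\bvec{y}\cdot\GRADF)\bvec{v}=m\bvec{v}$, so $T\bvec{v}=(m+3)\bvec{v}+(\ROTF\bvec{v})\,\bvec{y}^\perp$.

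Finally, suppose $\bvec{v}$ is homogeneous of degree $m$ with $T\bvec{v}=\bvec{0}$, and set $\phi\coloneq\ROTF\bvec{v}$, homogeneous of degree $m-1$ (hence $\phi\equiv 0$ when $m=0$). Applying $\ROTF$ to $(m+3)\bvec{v}+\phi\,\bvec{y}^\perp=\bvec{0}$ and using $\ROTF(\phi\,\bvec{y}^\perp)=2\phi+\bvec{y}\cdot\GRADF\phi=(m+1)\phi$ (Euler again) gives $(m+3)\phi+(m+1)\phi=(2m+4)\phi=0$, so $\phi=0$; then $(m+3)\bvec{v}=\bvec{0}$ forces $\bvec{v}=\bvec{0}$. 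Thus $T$ is injective, hence an isomorphism on the finite-dimensional space $\vPoly{\ell-1}(F;\Real^2)$, and the reduction in the first paragraph concludes the proof. The only genuinely delicate point is the explicit evaluation of $T$ and the attendant bookkeeping of the $\mathrm{adj}$, $\otimes$ and $\VDIVF$ conventions; alternatively, once $T$ is known to be injective one may avoid the isomorphism argument and conclude by comparing $\DIM\cCGoly{\ell}(F)=\DIM\vPoly{\ell-1}(F;\Real^2)$, which follows from the direct decomposition of Lemma~\ref{lemma:Poly=CGoly+cCGoly}.
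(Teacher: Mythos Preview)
Your proof is correct and takes a genuinely different route from the paper's. The paper works directly in coordinates: it expands $\btens{A}(P_1,P_2)$ as an explicit $2\times 2$ polynomial matrix, computes its divergence, and then establishes injectivity and surjectivity by a monomial-by-monomial analysis (showing that no nontrivial cancellation can occur, and exhibiting explicit preimages of each monomial). You instead recast everything as invertibility of the endomorphism $T\bvec{v}=\VDIVF\big[(\mathrm{Id}-\mathrm{adj})(\bvec{v}\otimes\bvec{y}^\intercal)\big]$ on $\vPoly{\ell-1}(F;\Real^2)$, derive the compact formula $T\bvec{v}=3\bvec{v}+(\bvec{y}\cdot\GRADF)\bvec{v}+(\ROTF\bvec{v})\,\bvec{y}^\perp$ via vector-calculus identities, observe that $T$ preserves homogeneity degree, and kill the kernel by applying $\ROTF$ and Euler's identity. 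Your argument is cleaner and coordinate-free, and it makes transparent why the operator is invertible (the ``$3$'' and the positivity of $2m+4$ do all the work); the paper's approach is more pedestrian but has the advantage of being entirely explicit, with no identities to verify.

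One caveat: your closing remark that one could alternatively conclude by invoking $\DIM\cCGoly{\ell}(F)=\DIM\vPoly{\ell-1}(F;\Real^2)$ from Lemma~\ref{lemma:Poly=CGoly+cCGoly} is circular in the paper's logical order, since the proof of Lemma~\ref{lemma:Poly=CGoly+cCGoly} there \emph{uses} Lemma~\ref{lemma:DIVFCG}. Your main argument does not rely on this (you get bijectivity of $T$ directly from injectivity plus finite dimension), so this is only a side comment to drop or rephrase.
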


\begin{proof}
  Take $\bvec{x}_F = \bvec{0}$, and write $\bvec{x} = (x,y)^\top$.
  By \eqref{eq:Poly=CGoly+cCGoly}, it holds
  \[
  \CGoly{\compl,\ell}(F) =
  \left\lbrace 
  \btens{A}(P_1,P_2) \coloneq 
  \begin{pmatrix}
    x\,P_1 & y\,P_1 \\
    x\,P_2 & y\,P_2
  \end{pmatrix}
  -
  \begin{pmatrix}
    y\,P_2 & -y\,P_1 \\
    -x\,P_2 & x\,P_1
  \end{pmatrix}
  \st P_1, P_2 \in \Poly{\ell-1}(F)\right\rbrace .
  \]
  The divergence of a generic $\btens{A}(P_1,P_2)$ is thus given by
  \[
  \VDIVF \btens{A}(P_1,P_2)
  = \begin{pmatrix} (x\partial_x + 2 y\partial_y + 3) P_1 - y\partial_x P_2 \\
    (2 x\partial_x + y\partial_y + 3) P_2 - x\partial_yP_1
  \end{pmatrix}.
  \]
  This expression behaves well on monomials:
  Given two couples $(i_1,j_1)$ and $(i_2,j_2)$ of non-negative integers, the above expression for $P_1(x,y) = x^{i_1}y^{j_1}$ and $P_2(x,y) = \lambda x^{i_2}y^{j_2}$, $\lambda\in\Real$, becomes
  \begin{equation} \label{eq:proof.DIVFCG}
    \VDIVF \btens{A}(x^{i_1}y^{j_1},\lambda x^{i_2}y^{j_2})
    = \begin{pmatrix} (i_1 + 2 j_1 + 3) x^{i_1}y^{j_1} - \lambda i_2x^{i_2 - 1}y^{j_2 +1} \\
      \lambda (2 i_2 +  j_2 + 3) x^{i_2}y^{j_2} - j_1x^{i_1 + 1}y^{j_1 - 1} \end{pmatrix}.
  \end{equation}  

  To prove the injectivity of $\VDIVF$, let us show that $\VDIVF\btens{A}(P_1,P_2)\equiv 0$ implies that both $P_1$ and $P_2$ are identically zero. We
  see from \eqref{eq:proof.DIVFCG} that each monomial $x^iy^j$ of $P_1$ must be cancelled by a monomial $x^{i+1}y^{j-1}$ in $P_2$ and vice-versa.
  If $j = 0$,
  no contribution from $P_2$ appears on the first component in \eqref{eq:proof.DIVFCG}, 
  and we must have $(i + 3)x^i = 0$ (which is impossible since $i\ge 0$), 
  else there must be $\lambda\in\Real$ such that
  \[
  \VDIVF \btens{A}(x^iy^j,\lambda x^{i+1}y^{j-1})
  = \begin{pmatrix}
    \left[i + 2j + 3 - (i+1)\lambda\right] x^iy^j
    \\
    \left[\lambda(2i + j + 4) - j\right] x^{i+1}y^{j-1}
  \end{pmatrix}\equiv 0.
  \]
  This condition requires $\lambda = \frac{i+2j+3}{i+1} = \frac{j}{2i+j+4}$, i.e.,
  \[
  (i+2j+3)(2i+j+4) = (i+1)j
  \implies
  %%       2i^2 + 2j^2 + 5ij + 10i + 11j + 12 =&\, ij + j\\
  2 (i+j)^2 + 10(i+j) + 10 = 0,
  \]
  which is impossible to satisfy, 
  showing that the only possibility for $\VDIVF \btens{A}(P_1,P_2)\equiv 0$ to hold is that $P_1 = P_2 \equiv 0$, 
  i.e., $\VDIVF$ is injective on $\CGoly{\compl,\ell}(F)$.

  Let us now prove its surjectivity by showing that every vector-valued field can be obtained as a divergence of an element of $\CGoly{\ell}(F)$.
  To this end, it suffices to consider the case where one component is a monomial and the other is zero.
  Letting $(i,j)$ denote a couple of non-negative integers such that $i+j\le\ell$, the
  above computation gives for $j > 0$ and $\lambda = \frac{j}{2i+j+4}$,
  \[
  \VDIVF \btens{A}\left(
  x^iy^j,\frac{j}{2i+j+4}x^{i+1}y^{j-1}
  \right)
  = \frac{2(i+j)^2 + 10(i+j) + 12}{2 i + j + 4}
  \begin{pmatrix} x^iy^j \\ 0
  \end{pmatrix}
  \]
  and, by symmetry,
  \[
  \VDIVF \btens{A}\left(
  \frac{i}{i + 2j + 4}x^{i-1}y^{j+1},x^{i}y^{j}
  \right)
  = \frac{2(i+j)^2 + 10(i+j) + 12}{i + 2 j + 4}
  \begin{pmatrix} 0 \\ x^iy^j \end{pmatrix},
  \]
  which concludes the proof since $\left\{x^iy^j\st \text{$i\ge 0$, $j\ge 0$, and $i+j$}\le\ell-1\right\}$ is a basis of $\Poly{\ell-1}(F)$ and its tensorization a basis of $\vPoly{\ell-1}(F;\Real^2)$.
\end{proof}

\begin{proof}[Proof of Lemma~\ref{lemma:Poly=CGoly+cCGoly}]
  Lemma \ref{lemma:DIVFCG} gives 
  $\CGoly{\ell}(F) \cap \cCGoly{\ell}(F) = \lbrace 0 \rbrace$.
  We only have to check that 
  $\DIM \tPoly{\ell}(F;\Real^{2\times 2}) = 2\ell^2 + 6\ell+4 = \DIM \CGoly{\ell}(F) + \DIM \cCGoly{\ell}(F)$.
  We can compute the dimension of $\CGoly{\ell}(F)$ from the isomorphism $\CURLF:\Poly[0]{\ell+1}(F)\to\Roly{\ell}(F)$ as follows:
  $
  \DIM \CGoly{\ell}(F)
  = 2 \big(\DIM \Poly{\ell+1}(F) - 1 \big)
  = \ell^2 + 5\ell+4.
  $
  On the other hand, the dimension of $\cCGoly{\ell}(F)$ is given by Lemma \ref{lemma:DIVFCG}:
  $
  \DIM \cCGoly{\ell}(F) = \DIM \vPoly{\ell-1}(F;\Real^2)
  = \ell^2 + \ell.
  $
  Summing the above expressions, the result follows.
\end{proof}

%------------------------------------------------------------------------------%

\section*{Acknowledgements}

The partial support of \emph{Agence Nationale de la Recherche} (ANR) and I-Site MUSE through the grant ANR-16-IDEX-0006 ``RHAMNUS'' is gratefully acknowledged.
Daniele Di Pietro acknowledges the partial support of ANR through the grant ANR-20-MRS2-0004 ``NEMESIS''.

%------------------------------------------------------------------------------%
% Bibliography
%------------------------------------------------------------------------------%

\printbibliography

\end{document}